\newcommand*\bb{\mathbb}
\newcommand *\w{^\wedge}
\newcommand*\de{\partial}
\newcommand{\vo}{\vec{o}\@ifnextchar{^}{\,}{}}
\def\YYint#1#2#3{{\setbox0=\hbox{$#1{#2#3}{\iint}$}
    \vcenter{\hbox{$#2#3$}}\kern-.50\wd0}}
\def\XXint#1#2#3{{\setbox0=\hbox{$#1{#2#3}{\int}$}
    \vcenter{\hbox{$#2#3$}}\kern-.50\wd0}}
\def\namedlabel#1#2{\begingroup
   \def\@currentlabel{#2}%
   \label{#1}\endgroup
}
\newcommand{\rmh}[1]{\mathpalette{\raisem@th{#1}}}
\newcommand{\raisem@th}[3]{\hspace*{-1pt}\raisebox{#1}{$#2#3$}}
\newcommand{\descref}[2]{\hyperref[#1]{\textnormal{\textcolor{black}{(}\textcolor{blue}{\bf #2}\textcolor{black}{)}}}}
\newcommand{\dref}[2]{\hyperref[#1]{\textcolor{black}{(}\textcolor{blue}{\bf #2}\textcolor{black}{)}}}
\newcommand\RR{\mathbb{R}}
\newcommand\NN{\mathbb{N}}
\newcommand{\ve}{\varepsilon}
\newcommand{\Om}{\Omega}
\g@addto@macro\normalsize{%
  \setlength\abovedisplayskip{2pt}
  \setlength\belowdisplayskip{2pt}
  \setlength\abovedisplayshortskip{4pt}
  \setlength\belowdisplayshortskip{4pt}
}
\numberwithin{equation}{section}
\crefname{section}{Section}{Sections}
\crefname{subsection}{Subsection}{Subsections}
\crefname{condition}{Condition}{Conditions}
\crefname{hypothesis}{Hypothesis}{Hypothesis}
\crefname{assumption}{Assumption}{Assumptions}
\crefname{lemma}{Lemma}{Lemmas}
\crefname{claim}{Claim}{Claims}
\crefname{remark}{Remark}{Remarks}
\newtheorem{theorem}{Theorem}[section]
\newtheorem{lemma}[theorem]{Lemma}
\newtheorem{proposition}[theorem]{Proposition}
\newtheorem{definition}[theorem]{Definition}
\newtheorem{remark}[theorem]{Remark}        
\numberwithin{equation}{section}
\newlist{steps}{enumerate}{1}
\setlist[steps, 1]{label = \textcolor{Cerulean}{Step \arabic*:}}
\def\ps@pprintTitle{%
	\let\@oddhead\@empty
	\let\@evenhead\@empty
	\def\@oddfoot{}%
	\let\@evenfoot\@oddfoot}
\DeclarePairedDelimiterX{\inp}[2]{\langle}{\rangle}{#1, #2}
\newcommand{\norm}[1]{\left\lVert#1\right\rVert}
\begin{document}

\begin{frontmatter}
\title{Existence of variational solutions to doubly nonlinear nonlocal evolution equations via minimizing movements}
\author[myaddress]{Suchandan Ghosh}
\ead{suchandan@tifrbng.res.in}
\author[myaddress]{Dharmendra Kumar}
\ead{dharmendra2020@tifrbng.res.in}
\author[myaddress]{Harsh Prasad}
\ead{harsh@tifrbng.res.in}
\author[myaddress]{Vivek Tewary\corref{mycorrespondingauthor}}
\ead{vivektewary@gmail.com and vivek2020@tifrbng.res.in}


\address[myaddress]{Tata Institute of Fundamental Research, Centre for Applicable Mathematics, Bangalore, Karnataka, 560065, India}
\cortext[mycorrespondingauthor]{Corresponding author}
\date{\today}
\begin{abstract}
    We prove existence of variational solutions for a class of doubly nonlinear nonlocal evolution equations whose prototype is the double phase equation 
    	\begin{align*}
        \de_t u^m + P.V.\int_{\mathbb{R}^N}& \frac{|u(x,t)-u(y,t)|^{p-2}(u(x,t)-u(y,t))}{|x-y|^{N+ps}}\\
        &+a(x,y)\frac{|u(x,t)-u(y,t)|^{q-2}(u(x,t)-u(y,t))}{|x-y|^{N+qr}} \,dy = 0,\,m>0,\,p>1,\,s,r\in (0,1).
   		\end{align*}
    We make use of the approach of minimizing movements pioneered by DeGiorgi \cite{degiorgiNewProblemsMinimizing1993} and Ambrosio \cite{ambrosioMinimizingMovements1995} and refined by B\"ogelein, Duzaar, Marcellini, and co-authors to study nonlinear parabolic equations with non-standard growth.
\end{abstract}
    \begin{keyword} Nonlocal operators with nonstandard growth, parabolic equations; Parabolic minimizers; Evolutionary variational solutions
    \MSC[2010] 35K51, 35A01, 35A15, 35R11.
    \end{keyword}

\end{frontmatter}
\begin{singlespace}
\tableofcontents
\end{singlespace}

\section{Introduction}

\subsection{The problem}
The notion of variational solutions was introduced by Lichnewsky and Temam in~\cite{lichnewskyPseudosolutionsTimedependentMinimal1978} to study parabolic equations by a variational approach, which has advantages when studying equations with unbalanced growth conditions where Lavrentiev phenomenon can prevent the existence of weak solutions. These methods were greatly refined and expaned upon by the group of B\"ogelein, Duzaar, Marcellini and coauthors in a series of papers such as \cite{bogeleinExistenceEvolutionaryVariational2014,bogeleinParabolicEquationsGrowth2013,bogeleinTimeDependentVariational2015,bogeleinDoublyNonlinearEquations2018,bogeleinExistenceVariationalSolutions2018,bogeleinExistenceEvolutionaryProblems2019,bogeleinExistenceSolutionsDiffusive2021,schatzlerExistenceSingularDoubly2019}. The group of Stefanelli has also conducted extensive work on variational methods in the existence theory of evolution equations starting from his early work on De Giorgi conjecture~\cite{stefanelliGiorgiConjectureElliptic2011} and subsequent works~\cite{akagiDoublyNonlinearEquations2014,scarpaDoublyNonlinearStochastic2020,scarpaStochasticPDEsConvex2021}.

In a recent preprint~\cite{prasadExistenceVariationalSolutions2021a}, the latter two authors extended the framework of variational solutions to parabolic fractional equations with time independent initial and boundary data. As an application, the latter two authors also studied the local boundedness of variational solutions to double phase nonlocal parabolic equations in~\cite{prasadLocalBoundednessVariational2021}.

In the present work, our aim is to study the fractional variant of the work of B\"ogelein et al~\cite{bogeleinDoublyNonlinearEquations2018}. We hope that this would open the path to studying boundedness and H\"older regularity for variational solutions of doubly nonlinear double phase nonlocal parabolic equations. In the recent work~\cite{banerjeeLocalPropertiesSubsolution2021}, regularity of non-negative weak solutions of doubly nonlinear nonlocal parabolic equations was studied.

To be precise, we will study the following equation
\begin{align}\label{maineq}
\de_t b(u) &+ P.V.\int_{\mathbb{R}^N} \frac{\text{D}_{\xi}H(x,y,u(x,t)-u(y,t))}{|x-y|^N} \,dy = 0 \text{ in } \Omega_{\infty}\nonumber\\
u &= u_0 \text{ on } (\mathbb{R}^N\setminus\Omega)\times(0,\infty)\cup {\Omega}\times\{0\}
\end{align}
where $\Omega$ is an open bounded subset of $\bb{R}^N$, and $\Omega_{\infty} = \Omega\times (0,\infty)$. The function $H=H(x,y,\xi)$ satisfies the following structure condition
\begin{align}
    &\label{eq:bound_below_H} H(x,y,\xi) \geq A\left(\frac{|\xi|}{|x-y|^s}\right)^p\\ 
    &\label{eq: cvx_H} H \text{ is a  Caratheodory function which is convex in the variable } \xi.
\end{align}
We assume $1<p<\infty$, $s \in (0,1)$ and $A>0$. The function $b:[0,\infty)\to[0,\infty)$ is a continuous, piecewise $C^1$ function. We may assume without loss of generality that $b(0)=0$, by replacing $b(u)$ with $b(u)-b(0)$. As in~\cite{bogeleinDoublyNonlinearEquations2018}, we make the following assumptions on the function $b$:
\begin{align}
    &\label{eq:bound_below_b} b(u)>0\mbox{ for }u>0.\\ 
    &\label{eq:growth_b} l\leq \frac{u b'(u)}{b(u)}\leq m,\,\mbox{ for } u>0,
\end{align} whenever $b'(u)$ exists and for $m\geq l>0$. In particular, this implies the non-negativity of $b'(u)$ whenever it is defined. 

These structure conditions admit a variety of problems with non-standard growth such as
\begin{itemize}
    \item $H(x,y,\xi) = \left(\frac{|\xi|}{|x-y|^s}\right)^p+a(x,y)\left(\frac{|\xi|}{|x-y|^r}\right)^q$, for $0\leq a(x,y)$, $1<p<q$ and $r,s\in (0,1)$. 
    \item $H(x,y,\xi) = \left(\frac{|\xi|}{|x-y|^s}\right)^p\log\left(1+\left(\frac{|\xi|}{|x-y|^s}\right)\right).$
    \item $H(x,y,\xi) = \left(\frac{|\xi|}{|x-y|^s}\right)^{a+b\sin\left(\log\,\log\left(\frac{|\xi|}{|x-y|^s}\right)\right)}$.
\end{itemize}
In particular, we would like to emphasize that in the double phase case, we obtain existence of variational solutions \textit{without} any restrictions on the gap $q-p$.  

\subsection{Background}

Regularity theory of fractional $p$-Laplace equations and their parabolic counterparts, along with other nonlinear variants has seen a lot of growth in recent years, particulary following the authoritative account of the associated functional frameworks in~\cite{dinezzaHitchhikersGuideFractional2012}. However, a theory for parabolic fractional equations with non-standard growth requires the development of an existence theory which can handle the possibility of the appearance of Lavrentiev phenomenon. The variational framework introduced in~\cite{bogeleinExistenceEvolutionaryVariational2014} proves to be particularly well-suited for such a study. 

On the other hand, the existence of solutions for doubly nonlinear equations for local equations found a definitive treatment in~\cite{altQuasilinearEllipticparabolicDifferential1983}. A variational approach was introduced in~\cite{akagiDoublyNonlinearEquations2014} as well as~\cite{bogeleinDoublyNonlinearEquations2018}, which differ in many specifics as described in the latter article, particularly as it eschews any growth condition from above. Other works tend to be based on the Galerkin method. Some recent works studying existence of solutions to nonlocal parabolic equations, including doubly nonlinear nonlocal equations, are~\cite{giacomoniExistenceStabilizationResults2019,giacomoniExistenceGlobalBehavior2021,liaoGlobalExistenceBlowup2020,liGlobalExistenceAsymptotic2021}.

The regularity theory of $p,q$ growth problems was started by Marcellini in a series of novel papers~\cite{marcelliniRegularityMinimizersIntegrals1989,marcelliniRegularityExistenceSolutions1991,marcelliniRegularityEllipticEquations1993,marcelliniRegularityScalarVariational1996}. There is a large body of work dealing with problems of $(p,q)$-growth as well as other nonstandard growth problems, for which we point to the surveys in \cite{marcelliniRegularityGeneralGrowth2020,mingioneRecentDevelopmentsProblems2021}.

Regarding fractional $p$-Laplace equation, in the elliptic case, regularity theory of fractional $p$-Laplace equations has been studied extensively. An early existence result can be found in~\cite{ishiiClassIntegralEquations2010}. Local boundedness and H\"older regularity in the framework of De Giorgi-Nash-Moser theory was worked out in~\cite{dicastroLocalBehaviorFractional2016} and \cite{cozziRegularityResultsHarnack2017}. Moreover, explicit higher regularity of the gradient is obtained in~\cite{brascoHigherSobolevRegularity2017}. On the other hand, explicit H\"older regularity of the solutions is obtained in \cite{brascoHigherHolderRegularity2018}. Higher integrability by a nonlocal version of Gehring's Lemma was proved in~\cite{kuusiFractionalGehringLemma2014}. Other works of interest are \cite{nowakHsRegularityTheory2020,nowakRegularityTheoryNonlocal2021,nowakImprovedSobolevRegularity2021,nowakHigherHolderRegularity2021}. For equations of nonstandard growth, the relevant works are \cite{scottSelfimprovingInequalitiesBounded2020,byunOlderRegularityWeak2021,chakerRegularitySolutionsAnisotropic2020,chakerRegularityEstimatesFractional2021,chakerNonlocalOperatorsSingular2020,chakerLocalRegularityNonlocal2021,chakerRegularityNonlocalProblems2021,giacomoniOlderRegularityResults2021,byunLocalOlderContinuity2021}. For the case of linear equations, i.e., $p=2$, we refer to \cite{caffarelliDriftDiffusionEquations2010,caffarelliRegularityTheoryParabolic2011,laraRegularitySolutionsNon2014,chang-laraRegularitySolutionsNonlocal2014}.

In the case of parabolic counterparts of the fractional $p$-Laplace equations, local boundedness was proved in~\cite{stromqvistLocalBoundednessSolutions2019}. Local boundedness and H\"older regularity has been proved in~\cite{dingLocalBoundednessHolder2021}. Explicit H\"older regularity has been obtained in~\cite{brascoContinuitySolutionsNonlinear2021}. The latter two authors have proved local boundedness estimates for the double phase nonlocal parabolic equation in~\cite{prasadLocalBoundednessVariational2021}.

\subsection{Framework}

In order to define the notion of weak solution, we need to set up the Orlicz spaces related to the function $b$. For details, we refer to the article of B\"ogelein et al~\cite{bogeleinDoublyNonlinearEquations2018} where these function spaces have been defined. 

Let us denote by $\phi$ the primitive of the function $b$, i.e., 
\begin{align}
    \label{primitive}
    \phi(u):=\int_0^u\,b(s)\,ds,\mbox{ for all }u\geq 0.
\end{align}

The function $\phi$ is $C^1$ and convex since $b$ is an increasing function. Moroever $\phi(0)=0$. We define the convex conjugate of $\phi$ by 
\begin{align}
    \label{conjconv}
    \phi^*(x):=\sup_{v\geq 0}(xv-\phi(v)),\mbox{ for all }x\geq 0.
\end{align}

Due to the convexity of $\phi$, we have
\begin{align}
    \phi^*(b(u))=b(u)u-\phi(u)\mbox{ for all }u\geq 0.
\end{align}

Further, the Fenchel inequality holds
\begin{align}
    \label{fenchel}
    uv\leq \phi(u) + \phi^*(v)\mbox{ for all }u,v\geq 0.
\end{align}

Define the boundary terms 
\begin{align}
    \label{boundary1}
    \mathfrak{b}[u,v]&:=\phi(v)-\phi(u)-b(u)(v-u)\nonumber\\
                    & = \phi(v)+\phi^*(b(u))-b(u)v\mbox{ for all }u,v\geq 0
\end{align} where the second equality follows from the definition of the convex conjugate.

Also define the boundary integral
\begin{align}
    \label{boundary2}
    \mathfrak{B}[u,v]=\int_{\Om} \mathfrak{b}[u,v]\,dx\mbox{ for all }u,v:\Omega\to[0,\infty).
\end{align}

For a domain $A$ in $\mathbb{R}^d$, $d\in\NN$, define the Orlicz space
\begin{align}
    \label{orlicz}
    L^{\phi}(A):=\left\{ v:A\to\RR\mbox{ measurable }: \int_A \phi(|v|)\,dx<\infty \right\}.
\end{align}

This definition is well-defined particularly since $\phi, \phi^*$ satisfy the doubling condition or the $\Delta_2$-condition. This will be a consequence of some technical lemmas which are proved in ~\cite{bogeleinDoublyNonlinearEquations2018} and which we will list in  a subsequent section. The Orlicz space $L^\phi(A)$ is equipped with the norm
\begin{align}
    \label{orlicznorm}
    ||v||_{L^\phi(A)}=\sup\left\{ \left|\int_A\,vw\,dx\right| : \int_A \phi^*(w)\leq 1\right\}.
\end{align}

More details on this restricted class of Orlicz spaces may be found in ~\cite{adamsSobolevSpaces2003}.

\subsection{Definition}

\begin{definition}
    Let $\Om$ be an open bounded subset of $\RR^N$. Suppose that $H$ satisfies the assumptions \cref{eq:bound_below_H}, \cref{eq: cvx_H} and $b$ satisfies the assumptions \cref{eq:growth_b} and \cref{eq:bound_below_b}. Let the time-independent Cauchy-Dirichlet data $u_0:\RR^N\to [0,\infty)$ satisfy
    \begin{align}\label{datahypo}
        u_0\in W^{s,p}(\mathbb{R}^N), u_{|\Omega}\in L^\phi(\Omega)\mbox{ and }\sup_{t\in (0,T)}\iint\limits_{\mathbb{R}^N\times\mathbb{R}^N}\frac{H(x,y,u_0(x)-u_0(y))}{|x-y|^N}\,dx\,dy<\infty.
    \end{align}
     By a variational solution to~\cref{maineq} we mean a function $u:(0,T)\times\RR^N\to [0,\infty)$ such that \[u\in L^p(0,T;W^{s,p}(\mathbb{R}^N))\cap C^0(0,T;L^\phi(\Om)), u-u_0\in L^p(0,T;W^{s,p}_0(\Om))\] and
     \begin{align}
         \label{defvar}
         \int_{0}^{\tau} \int_{\Om}{\de_t v}\,{(b(v)-b(u))}\,dx\,dt & + \int_{0}^{\tau}\iint\limits_{\RR^N\times\RR^N} \frac{H(x,y,v(x,t)-v(y,t))-H(x,y,u(x,t)-u(y,t))}{|x-y|^N}\,dx\,dy\,dt\nonumber\\
         &\geq \mathfrak{B}[u(\tau),v(\tau)]-\mathfrak{B}[u_0,v(0)],
     \end{align} for any $\tau\in [0,T]$ and for all $v:(0,T)\times\RR^N\to [0,\infty)$ such that $v\in L^p(0,T;W^{s,p}(\mathbb{R}^N))$ and $\de_t v\in L^\phi(0,T;\Om)$ such that $v-u_0\in L^p(0,T;W_0^{s,p}(\Om))$ and $v(0)\in L^\phi(\Om)$.
\end{definition}
\begin{remark}
In our definition of variational solution, both the solution and the comparison map have to match on $\Omega^c\times(0,T)$ and since we have assumed that the data on $\Omega^c$ is in $W^{s,p}(\bb{R}^N)$ we may cancel integrals of $H$ over $\Omega^c\times\Omega^c$ on both sides to obtain the following equivalent form of the variational inequality: 
\begin{align*}
    \int_{0}^{\tau} \int_{\Om}{\de_t v}\,{(b(v)-b(u))}\,dx\,dt & + \int_{0}^{\tau}\iint\limits_{C_\Om} \frac{H(x,y,v(x,t)-v(y,t))-H(x,y,u(x,t)-u(y,t))}{|x-y|^N}\,dx\,dy\,dt\nonumber\\
         &\geq \mathfrak{B}[u(\tau),v(\tau)]-\mathfrak{B}[u_0,v(0)]
\end{align*} where $C_{\Omega} = (\Omega^c\times\Omega^c)^c$. 
\end{remark}
\begin{remark}
Let us also mention that in the double phase case i.e. when $H$ is of the form
\[
H(x,y,\xi) = \left(\frac{|\xi|}{|x-y|^s}\right)^p+a(x,y)\left(\frac{|\xi|}{|x-y|^r}\right)^q
\]
the condition on initial data ~\cref{datahypo} is satisfied if $u_0\in W^{s,p}(\RR^N)$, $u_{|\Om}\in L^2(\Om)$ and:
\[
\iint\limits_{\mathbb{R}^N\times\mathbb{R}^N}\frac{|u_0(x)-u_0(y)|^p}{|x-y|^{N+sp}}+ a(x,y)\frac{|u_0(x)-u_0(y)|^q}{|x-y|^{N+rq}}\,dx\,dy<\infty.
\]
\end{remark}
\subsection{Main Results}

The following existence theorem is the main result of the paper. 

\begin{theorem}{\textbf{(Existence of variational solutions)}}
    \label{mainthm:e}
    Let $\Om$ be an open bounded subset of $\RR^N$. Suppose that $H$ satisfies the assumptions~\cref{eq:bound_below_H} and~\cref{eq: cvx_H} and $b$ satisfies the assumptions \cref{eq:growth_b} and \cref{eq:bound_below_b}. Let the time-independent Cauchy-Dirichlet data $u_0$ satisfy~\cref{datahypo}. Then, there exists a variational solution to~\cref{maineq} in the sense of~\cref{defvar}, with $\de_t\sqrt{\phi(u)}\in L^2(\Om_T)$. The initial data is attained in the $L^\phi$ sense.
\end{theorem}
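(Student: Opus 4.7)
The plan is to construct variational solutions via the minimizing movement scheme of De Giorgi and Ambrosio, adapted to doubly nonlinear equations in the spirit of B\"ogelein, Duzaar, Marcellini and co-authors. Fix a time step $h=T/N$, put $u_0^{(h)}:=u_0$, and for $k=1,\ldots,N$ iteratively define $u_k^{(h)}$ as a minimizer of the functional
\[
\mathcal{F}_k(v):=\frac{1}{h}\mathfrak{B}[u_{k-1}^{(h)},v]+\iint_{C_\Om}\frac{H(x,y,v(x)-v(y))}{|x-y|^N}\,dx\,dy
\]
over admissible $v\in W^{s,p}(\RR^N)$ satisfying $v=u_0$ on $\Om^c$ and $v|_{\Om}\in L^\phi(\Om)$. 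Existence of each $u_k^{(h)}$ follows from the direct method: \cref{eq:bound_below_H} gives coercivity on the Gagliardo seminorm, nonnegativity and convexity of $\mathfrak{b}[u_{k-1}^{(h)},\cdot]$ together with \cref{eq:growth_b} gives coercivity in $L^\phi(\Om)$, and weak lower semicontinuity is ensured by \cref{eq: cvx_H} and the convexity of $\phi$.

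Next I would derive a priori estimates for the piecewise constant and piecewise affine interpolants built from $\{u_k^{(h)}\}$, which I denote collectively by $u^{(h)}$. Testing the minimality of $u_k^{(h)}$ against the competitor $u_{k-1}^{(h)}$ and telescoping over $k$ yields on the one hand the uniform energy estimate
\[
\sup_{0\le t\le T}\iint_{C_\Om}\frac{H(x,y,u^{(h)}(x,t)-u^{(h)}(y,t))}{|x-y|^N}\,dx\,dy\le C(u_0),
\]
and on the other the dissipation estimate $\sum_{k=1}^N h^{-1}\mathfrak{B}[u_{k-1}^{(h)},u_k^{(h)}]\le C(u_0)$. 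The elementary inequality
\[
\mathfrak{b}[u,v]\ge c(l,m)\bigl(\sqrt{\phi(u)}-\sqrt{\phi(v)}\bigr)^2,
\]
which is a consequence of the doubling property of $\phi$ implied by \cref{eq:growth_b}, converts the dissipation bound into a uniform discrete $L^2$-in-time estimate for $\partial_t\sqrt{\phi(u^{(h)})}$, which is the source of the regularity asserted in the theorem.

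I would then pass to the limit $h\to 0$ along a subsequence. The energy estimate supplies weak compactness of $\{u^{(h)}\}$ in $L^p(0,T;W^{s,p}(\RR^N))$. The discrete $L^2$-bound on $\partial_t\sqrt{\phi(u^{(h)})}$, combined with a chain-rule control of the Gagliardo seminorm of $\sqrt{\phi(u^{(h)})}$ by that of $u^{(h)}$ (using \cref{eq:growth_b}), places $\sqrt{\phi(u^{(h)})}$ in a bounded set of a fractional Sobolev space in $x$ and in $H^1$ in $t$; Aubin--Lions compactness then delivers strong $L^2(\Om_T)$ convergence, and hence a.e.\ convergence of $u^{(h)}\to u$ since $\sqrt{\phi(\cdot)}$ is strictly increasing. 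The discrete variational inequality from minimality of $u_k^{(h)}$ against a smooth comparison map $v$ can then be passed to the limit: the nonlocal term in $u^{(h)}$ goes by Fatou/lower semicontinuity, where \cref{eq: cvx_H} is crucial; the term in $v$ passes by continuity of the integrand; and a discrete integration by parts converts the boundary contribution into $\int_0^\tau\!\int_\Om \partial_t v\,(b(v)-b(u))\,dx\,dt$ modulo negligible error.

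The hardest step will be establishing the variational inequality at an arbitrary slice $\tau\in[0,T]$ rather than only in integrated form, since the dissipation estimate naturally produces only averaged-in-$\tau$ inequalities. To recover the pointwise statement I would mollify the competitor in time, exploit the $L^2$-in-time regularity of $\partial_t\sqrt{\phi(u)}$ to obtain continuity of $\tau\mapsto\mathfrak{B}[u(\tau),v(\tau)]$, and pass the inequality through Lebesgue points in $\tau$, using in addition the lower semicontinuity of $w\mapsto\mathfrak{B}[u(\tau),w]$ provided by convexity of $\phi$. Attainment of the initial datum in $L^\phi(\Om)$ then follows by taking $\tau\downarrow 0$ in the $\mathfrak{B}$-estimate and combining this with the continuity just established.
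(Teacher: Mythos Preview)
Your overall minimizing-movement strategy matches the paper exactly: the time-discretized functionals, existence of minimizers via the direct method, the telescoping energy/dissipation estimates, the discrete integration-by-parts, and the averaging-in-$\tau$ followed by $\delta\to 0$ to recover the inequality at every slice are precisely what the paper does in Section~4. There is, however, a genuine gap in your compactness step.

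You propose to control the Gagliardo seminorm of $\sqrt{\phi(u^{(h)})}$ by that of $u^{(h)}$ via a chain rule, and then run Aubin--Lions on $\sqrt{\phi(u^{(h)})}$. But under \cref{eq:growth_b} one has $\phi(u)\sim u^{l+1}$ near $0$ and $\phi(u)\sim u^{m+1}$ at infinity, so $u\mapsto\sqrt{\phi(u)}$ is in general neither Lipschitz nor globally H\"older: for any $m>1$ (the typical porous-medium range) it grows super-linearly and no pointwise inequality $|\sqrt{\phi(a)}-\sqrt{\phi(b)}|\le C|a-b|^{\alpha}$ holds. Consequently the fractional seminorm of $\sqrt{\phi(u^{(h)})}$ is not controlled by that of $u^{(h)}$, and the Aubin--Lions argument as you have written it does not close. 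Conversely, transferring the time-regularity of $\sqrt{\phi(u^{(h)})}$ back to $u^{(h)}$ runs into the same obstruction from the other side.

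The paper sidesteps this by proving a bespoke compactness result (\cref{compactnesslemma}) that keeps the spatial and temporal information on \emph{different} quantities and never composes $\sqrt{\phi}$ with the Gagliardo seminorm. Spatial compactness is obtained from the uniform $W^{s,p}$-bound on $u^{(h)}$ itself via Rellich (\cref{compactembed}), applied at the finitely many nodes of a coarser dyadic time-grid; this gives $L^1(\Om_T)$-convergence of $\sqrt{\phi(w_{k,\alpha})}$ for each fixed grid level $\alpha$. The temporal estimate \cref{continuitybound} is used only to show that the coarsened functions $\sqrt{\phi(w_{k,\alpha})}$ are $L^1$-close to $\sqrt{\phi(u^{(k)})}$ uniformly in $k$. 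A diagonal extraction in $\alpha$ then yields $L^1$-Cauchyness of $\sqrt{\phi(u^{(k)})}$, and a Minty-type monotonicity argument identifies the limit as $\sqrt{\phi(u)}$. This works for the full range $m\ge l>0$.
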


\begin{remark}
    Time-dependent functions $H$ can be studied in the manner of \cite{schatzlerExistenceVariationalSolutions2017}. As a first step, one considers $H$ that have a $p$ growth condition from above and Lipschitz regularity in the time-variable. To consider more general $H$ with only measurability in the time-variable, one needs to regularize $H$ and then pass to the limit in the regularizing parameter. This last step, however, is not straightforward as it involves composition of weak limits and nonlinearity.
\end{remark}

\begin{remark}
    As remarked in \cite{bogeleinDoublyNonlinearEquations2018}, uniqueness is a delicate issue. However, for $H$ with $p$-growth condition and $b$ Lipschitz continuous, uniqueness may be proved in a manner similar to \cite{altQuasilinearEllipticparabolicDifferential1983}.
\end{remark}

\begin{remark}
    Existence in unbounded domains $\Om$ is also proved in \cite{bogeleinDoublyNonlinearEquations2018}. This requires construction of a suitable function space which has Poincar\'e inequality built in. This may be achieved by taking completion of $C_0^\infty(\Om)$ in the Gagliardo seminorm $[\cdot]_{s,p}$.
\end{remark}

\section{Preliminaries}
\subsection{Notations}
We begin by collecting the standard notation that will be used throughout the paper:
\begin{itemize}
\item We shall denote $N$ to be the space dimension. We shall denote by $z=(x,t)$ a point in $ \RR^N\times (0,T)$.  
\item We shall alternately use $\dfrac{\partial f}{\partial t},\partial_t f,f'$ to denote the time derivative of f.
 \item Let $\Omega$ be an open bounded domain in $\mathbb{R}^N$ with boundary $\partial \Omega$ and for $0<T\leq\infty$,  let $\Omega_T\coloneqq \Omega\times (0,T)$. 
\item Integration with respect to either space or time only will be denoted by a single integral $\int$ whereas integration on $\Om\times\Om$ or $\RR^N\times\RR^N$ will be denoted by a double integral $\iint$.
\end{itemize}

\subsection{Sobolev spaces}
Let $1<p<\infty$, we denote by $p'=p/(p-1)$ the conjugate exponent of $p$. Let $\Om$ be an open subset of $\RR^N$. We define the {\it Sobolev-Slobodeki\u i} space, which is the fractional analogue of Sobolev spaces.
\begin{align*}
    W^{s,p}(\Om)=\left\{ \psi\in L^p(\Omega): [\psi]_{W^{s,p}(\Om)}<\infty \right\}, s\in (0,1),
\end{align*} where the seminorm $[\cdot]_{W^{s,p}(\Om)}$ is defined by 
\begin{align*}
    [\psi]_{W^{s,p}(\Om)}=\left(\iint\limits_{\Om\times\Om} \frac{|\psi(x)-\psi(y)|^p}{|x-y|^{N+ps}}\,dx\,dy\right)^{\frac 1p}.
\end{align*}
The space when endowed with the norm $\norm{\psi}_{W^{s,p}(\Om)}=\norm{\psi}_{L^p(\Om)}+[\psi]_{W^{s,p}(\Om)}$ becomes a Banach space. The space $W^{s,p}_0(\Om)$ is the subspace of $W^{s,p}(\RR^N)$ consisting of functions that vanish outside $\Om$.

Let $I$ be an interval and let $V$ be a separable, reflexive Banach space, endowed with a norm $\norm{\cdot}_V$. We denote by $V^*$ its topological dual space. Let $v$ be a mapping such that for a.e. $t\in I$, $v(t)\in V$. If the function $t\mapsto \norm{v(t)}_V$ is measurable on $I$, then $v$ is said to belong to the Banach space $L^p(I;V)$ if $\int_I\norm{v(t)}_V^p\,dt<\infty$. It is well known that the dual space $L^p(I;V)^*$ can be characterized as $L^{p'}(I;V^*)$.

\subsection{Mollification in time}
Throughout the paper, we will use the following mollification in time. Let $\Omega$ be an open subset of $\mathbb{R}^N$. For $T>0$, $v\in L^1(\Omega_T)$, $v_0\in L^1(\Omega)$ and $h\in (0,T]$, we define
\begin{align}
    \label{timemollify}
    [v]_h(\cdot,t)=e^{-\frac{t}{h}}v_0 + \frac{1}{h}\int_0^t e^{\frac{s-t}{h}}v(\cdot,s)\,ds,
\end{align} for $t\in [0,T]$.
The convergence properties of mollified functions have been collected in~\ref{Molli}.

\subsection{Auxiliary Results}
We collect the following standard results which will be used in the course of the paper. We begin with a general result on convex minimization.

\begin{proposition}(\cite[Theorem~2.3]{rindlerCalculusVariations2018})\label{mintheorem}
    Let $X$ be a closed affine subset of a reflexive Banach space and let $\mathcal{F}: X\to (-\infty,\infty]$. Assume the following:
    \begin{enumerate}
        \item For all $\Lambda\in\RR$, the sublevel set $\{x\in X: \mathcal{F}[u]<\Lambda\}$ is sequentially weakly precompact, i.e., if for a sequence $(u_j)\subset X$, $\mathcal{F}[u_j]<\Lambda$, then $u_j$ has a weakly convergent subsequence.
        \item For all sequences $(u_j)\subset X$ with $u_j\rightharpoonup u$ in $X$-weak, it holds that
        \begin{align*}
            \mathcal{F}[u] \leq \liminf\limits_{j\to\infty} \mathcal{F}[u_j].
        \end{align*}
    \end{enumerate}
    Then, the problem minimization problem: Minimize $\mathcal{F}[u]$ over all $u\in X$ has a solution.
\end{proposition}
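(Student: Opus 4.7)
The plan is to apply the classical direct method of the calculus of variations. Denote the ambient reflexive Banach space by $Y$, so $X \subset Y$ is closed and affine. Set $m := \inf\{\mathcal{F}[u] : u \in X\} \in [-\infty,+\infty]$. If $m = +\infty$ then $\mathcal{F} \equiv +\infty$ on $X$ and any point of the (nonempty) set $X$ trivially minimizes, so assume $m < +\infty$. Select a minimizing sequence $(u_j) \subset X$ with $\mathcal{F}[u_j] \to m$; then for $j$ sufficiently large we have $\mathcal{F}[u_j] < m + 1$, so the tail of the sequence lies in the sublevel set $\{u \in X : \mathcal{F}[u] < m+1\}$. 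By hypothesis (1) applied with $\Lambda = m+1$, after passing to a (not relabelled) subsequence we obtain a limit $u \in Y$ with $u_j \rightharpoonup u$ weakly in $Y$.

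The next step is to check that the weak limit still lies in the feasible set. Since $X$ is a closed affine subset of $Y$, it is convex and norm-closed, hence weakly closed by Mazur's theorem (a convex norm-closed subset of a Banach space coincides with its weak closure). Therefore $u \in X$. Combining this with the weak lower semicontinuity in hypothesis (2) yields
\[
\mathcal{F}[u] \;\leq\; \liminf_{j\to\infty} \mathcal{F}[u_j] \;=\; m,
\]
while the definition of $m$ and the membership $u \in X$ give the reverse inequality $\mathcal{F}[u] \geq m$. Hence $\mathcal{F}[u] = m$ and $u$ is a minimizer.

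There is no real obstacle of substance; the result is the textbook direct method. The only points worth flagging are, first, that hypothesis (1) must be applied only once the infimum $m$ is known to be finite so as to supply a meaningful $\Lambda$, and second, that the passage from the weak limit $u \in Y$ to the required membership $u \in X$ relies on Mazur's theorem together with the fact that a closed affine subset of a Banach space is convex. Coercivity from assumption (1) and weak lower semicontinuity from assumption (2) then combine in the standard way to deliver the minimizer.
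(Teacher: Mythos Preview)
Your argument is correct and is precisely the standard direct method. The paper does not supply its own proof of this proposition; it is quoted as an auxiliary result from Rindler's textbook, so there is nothing to compare against.

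One small point worth tightening: when you set $\Lambda = m+1$ you are implicitly assuming $m > -\infty$. This is indeed forced by the hypotheses (were $m = -\infty$, running your argument with, say, $\Lambda = 0$ would yield a weak limit $u \in X$ with $\mathcal{F}[u] \le \liminf_j \mathcal{F}[u_j] = -\infty$, contradicting the codomain $(-\infty,\infty]$), but it would be cleaner to dispose of this case explicitly before invoking $m+1$ as a real number.
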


We will need the following general result on weak lower semicontinuity of functionals.

\begin{proposition}(\cite[Corollary~3.9]{brezisFunctionalAnalysisSobolev2011})
    \label{lowsemcont} 
    Assume that $\phi:E\to (-\infty,\infty]$ is convex and lower semicontinuous in the strong topology. Then $\phi$ is lower semicontinuous in the weak topology.
\end{proposition}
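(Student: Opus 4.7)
The plan is to reduce the weak lower semicontinuity of $\phi$ to the weak closedness of its sublevel sets, and then invoke the Hahn--Banach separation theorem (Mazur's theorem) to upgrade strong closedness of a convex set to weak closedness. The whole argument is purely geometric once the characterization via sublevel sets is in place; convexity is used only to conclude that each sublevel set is convex.

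First I would recall the standard characterization of lower semicontinuity: a function $\phi\colon E\to(-\infty,\infty]$ is lower semicontinuous with respect to a given topology on $E$ if and only if, for every $\lambda\in\mathbb{R}$, the sublevel set
\[
S_\lambda := \{x\in E: \phi(x)\leq \lambda\}
\]
is closed in that topology. Under the hypotheses, each $S_\lambda$ is convex (because $\phi$ is convex) and strongly closed (because $\phi$ is strongly lower semicontinuous). Consequently, it is enough to prove the auxiliary statement that every convex strongly closed subset of $E$ is weakly closed; applying this with $C=S_\lambda$ for each $\lambda$ will finish the proof.

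To establish the auxiliary statement, let $C\subset E$ be convex and strongly closed, and pick $x_0\in E\setminus C$. Since $\{x_0\}$ is compact and $C$ is closed and convex with $\{x_0\}\cap C=\emptyset$, the geometric (second) form of the Hahn--Banach theorem produces $f\in E^*$ and $\alpha\in\mathbb{R}$ such that
\[
f(x_0)<\alpha<f(y)\quad\text{for every }y\in C.
\]
The set $U:=\{x\in E: f(x)<\alpha\}$ is then a weakly open neighbourhood of $x_0$ disjoint from $C$, so $E\setminus C$ is weakly open, i.e.\ $C$ is weakly closed.

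The only nontrivial ingredient is Hahn--Banach separation, which requires a locally convex topological vector space; a normed space is more than enough, so no additional hypothesis on $E$ is needed. This is also the step that explains \emph{why} convexity is indispensable: without it, sublevel sets need not be convex and the separation argument breaks down, so strong lower semicontinuity no longer implies weak lower semicontinuity.
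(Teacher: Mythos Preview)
Your argument is correct and is precisely the standard proof (via Mazur's theorem on convex sets) that appears in the cited reference; the paper itself does not give a proof but simply quotes \cite[Corollary~3.9]{brezisFunctionalAnalysisSobolev2011}. There is nothing to add.
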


We have the following Sobolev-type inequality~\cite[Theorem~6.5]{dinezzaHitchhikersGuideFractional2012}.

\begin{theorem}[\cite{dinezzaHitchhikersGuideFractional2012}]
    Let $s\in(0,1)$ and $1\leq p<\infty$, $sp<N$ and let $\kappa^*=\frac{N}{N-sp}$, then for any $g\in W^{s,p}(\mathbb{R}^N)$ and $\kappa\in [1,\kappa^*]$, we have
    \begin{align}\label{sobolev1}
        \norm{g}_{L^{\kappa p}}^p\leq C\iint\limits_{\mathbb{R}^N\times\mathbb{R}^N}\frac{|g(x)-g(y)|^p}{|x-y|^{N+sp}}\,dx\,dy.
    \end{align}. If $g\in W^{s,p}(\Om)$ and $\Om$ is an extension domain, then
    \begin{align}
        \label{sobolev2}
        ||g||_{L^{\kappa p}(\Om)}^p\leq C\norm{g}_{W^{s,p}(\Om)}.
    \end{align} If $sp=N$, then \cref{sobolev1} holds for all $\kappa\in [1,\infty)$, whereas if $sp>N$, then \cref{sobolev2} holds for all $\kappa\in[1,\infty]$.
\end{theorem}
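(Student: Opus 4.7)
The plan is to reproduce the proof of the fractional Gagliardo--Nirenberg--Sobolev embedding as in Di~Nezza--Palatucci--Valdinoci, proceeding through the three cases $sp<N$, $sp=N$, $sp>N$, and handling $\RR^N$ first before descending to extension domains.

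For the main critical case $sp<N$ and $\kappa=\kappa^*=N/(N-sp)$, I would first establish~\cref{sobolev1} when $p=1$, and then bootstrap to general $p$. For $p=1$, the strategy is to write $g$ via the layer-cake formula $g(x)=\int_0^\infty \chi_{\{|g|>t\}}(x)\,dt$, reduce to showing an isoperimetric-style inequality for characteristic functions, and control $|\{|g|>t\}|^{(N-s)/N}$ by the Gagliardo $W^{s,1}$-seminorm of $\chi_{\{|g|>t\}}$. A clean way to do this is via the dyadic decomposition of annuli $\{2^k<|g|\le 2^{k+1}\}$, estimating the double integral restricted to pairs $(x,y)$ with $|g(x)|$ large and $|g(y)|$ small, and using that $|x-y|^{-N-s}$ is summable against annular shells. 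Once the case $p=1$ is in hand, for $p>1$ I would apply it to $|g|^{\gamma}$ with $\gamma=p(N-s)/(N-sp)$ chosen so that the target exponent becomes $p\kappa^*$, expand $||g(x)|^\gamma-|g(y)|^\gamma|\le \gamma(|g(x)|^{\gamma-1}+|g(y)|^{\gamma-1})|g(x)-g(y)|$, and then close the estimate with H\"older's inequality applied in the double integral so that the factor of $|g|^{\gamma-1}$ absorbs into the $L^{p\kappa^*}$-norm on the left.

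The subcritical exponents $\kappa\in[1,\kappa^*)$ for $\RR^N$ follow from the critical case by interpolating $L^{\kappa p}$ between $L^p$ and $L^{\kappa^* p}$; note however that~\cref{sobolev1} has no $\|g\|_{L^p}$ on the right, which is fine on $\RR^N$ at the critical exponent but for subcritical exponents one needs the full $W^{s,p}$ norm, so the clean statement for $\kappa<\kappa^*$ naturally uses~\cref{sobolev2}. For the extension-domain inequality~\cref{sobolev2}, I would invoke a bounded extension operator $E\colon W^{s,p}(\Om)\to W^{s,p}(\RR^N)$ with $\|Eg\|_{W^{s,p}(\RR^N)}\le C\|g\|_{W^{s,p}(\Om)}$, apply the $\RR^N$ inequality to $Eg$, and then restrict back to $\Om$.

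The boundary cases I would treat separately: when $sp=N$, I would use that for any $q\in[1,\infty)$ one can choose $s'<s$ with $s'p<N$ and $\kappa^*(s')\ge q/p$, and then apply the $sp<N$ result at parameter $s'$, using the easy monotonicity $[g]_{W^{s',p}}\lesssim \|g\|_{W^{s,p}}$ on bounded domains (or via a localization argument on $\RR^N$). When $sp>N$, I would prove the Morrey-type bound $\|g\|_{L^\infty(\Om)}\le C\|g\|_{W^{s,p}(\Om)}$ by estimating the oscillation of $g$ on balls via the Gagliardo seminorm and a telescoping argument on dyadic balls, giving H\"older continuity and hence the $L^\infty$ bound; the $L^{\kappa p}$ bound for any $\kappa\in[1,\infty]$ then follows by interpolation with $L^p$.

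The main obstacle is the critical embedding at $p=1$: the dyadic / layer-cake argument is the heart of the proof, and the delicate point is ensuring that the contribution from pairs $(x,y)$ in non-adjacent level sets is actually summable, which requires bookkeeping on how $|x-y|^{-N-s}$ interacts with the measures of the level sets. Every other step reduces, via algebraic manipulations, H\"older's inequality, or extension/interpolation, to this base estimate.
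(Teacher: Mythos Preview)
The paper does not give a proof of this theorem at all; it is quoted from \cite{dinezzaHitchhikersGuideFractional2012} as a preliminary result and used without argument. Your proposal is therefore not competing against anything in the paper, but rather sketching the proof from the cited source, and it does so faithfully: the layer-cake/dyadic argument for $p=1$, the bootstrap to general $p$ via $|g|^{\gamma}$ with $\gamma=p(N-s)/(N-sp)$ and H\"older, the extension-operator reduction for $\Om$, the $s'\!<\!s$ trick for the borderline $sp=N$, and the Morrey telescoping for $sp>N$ are exactly the steps in Di~Nezza--Palatucci--Valdinoci. Your observation that \cref{sobolev1} with only the seminorm on the right cannot hold for subcritical $\kappa<\kappa^*$ on $\RR^N$ (by scaling) is correct and points to a slight imprecision in the stated theorem; the clean statement indeed requires the full $W^{s,p}$ norm in the subcritical range, as you note.
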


We will require the compact embedding as follows.

\begin{proposition}(\cite[Proposition~2.1]{hanCompactSobolevSlobodeckijEmbeddings2022})
    \label{compactembed}
    Assume $N\geq 2$, $1\leq p\leq \infty$ and $0<s<1$. Let $\Om$ be a bounded extension domain. When $sp<N$, the embedding $W^{s,p}(\Om)\mapsto L^r(\Om)$ is compact for $r\in [1,p_*)$, and when $sp\geq N$, the same embedding is compact for $r\in [1,\infty)$. The theorem also holds for $W^{s,p}_0(\Om)$ for any bounded domain in $\RR^N$.
\end{proposition}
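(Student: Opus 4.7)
The plan is to establish $L^p(\Omega)$-compactness via the Fr\'echet--Kolmogorov--Riesz criterion, and then upgrade to $L^r(\Omega)$ for all admissible $r$ using H\"older's inequality on the bounded set $\Omega$ when $r\leq p$ and Lyapunov interpolation against the continuous Sobolev embedding of~\cref{sobolev1,sobolev2} when $r>p$. The extension domain hypothesis lets us transfer the problem to $W^{s,p}(\RR^N)$ via a bounded linear extension operator, so we may assume throughout that our bounded sequence lives in $W^{s,p}(\RR^N)$; for the $W^{s,p}_0(\Omega)$ statement, extension by zero plays the same role for any bounded $\Omega$ and requires no regularity of $\partial\Omega$.

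The crucial step is a uniform translation estimate. Given a bounded sequence $(u_n)\subset W^{s,p}(\Omega)$, extend to $(\tilde u_n)\subset W^{s,p}(\RR^N)$, multiply by a fixed smooth cutoff $\chi$ equal to $1$ on $\Omega$ so that all the supports lie in a common ball, and convolve with a standard mollifier $\eta_\ve$. The two bounds
\begin{equation*}
    \|v - v\ast\eta_\ve\|_{L^p(\RR^N)} \leq C\ve^s [v]_{W^{s,p}(\RR^N)}, \qquad \|\nabla(v\ast\eta_\ve)\|_{L^p(\RR^N)} \leq C\ve^{s-1}[v]_{W^{s,p}(\RR^N)},
\end{equation*}
applied to $v=\chi\tilde u_n$, combined with $\|\tau_h w - w\|_{L^p}\leq |h|\|\nabla w\|_{L^p}$ for the mollified part $w=v\ast\eta_\ve$ and the triangle inequality, and optimized by choosing $\ve\sim |h|$, yield
\begin{equation*}
    \sup_n \|\tau_h(\chi\tilde u_n) - \chi\tilde u_n\|_{L^p(\RR^N)} \leq C|h|^s
\end{equation*}
for $|h|$ small. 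Together with uniform $L^p$-boundedness and uniformly compact supports (which trivialises the tightness hypothesis), the Fr\'echet--Kolmogorov--Riesz theorem extracts a subsequence converging in $L^p(\RR^N)$, whose restriction to $\Omega$ converges in $L^p(\Omega)$.

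To reach the full range of exponents, for $r\in[1,p]$ H\"older on the bounded set $\Omega$ gives $\|u\|_{L^r(\Omega)}\leq |\Omega|^{1/r-1/p}\|u\|_{L^p(\Omega)}$, so the $L^p$-Cauchy subsequence is also $L^r$-Cauchy. For $r\in(p,p_*)$ when $sp<N$, choose $\theta\in(0,1)$ with $\frac{1}{r}=\frac{\theta}{p}+\frac{1-\theta}{p_*}$ and apply Lyapunov interpolation
\begin{equation*}
    \|u_n-u_m\|_{L^r(\Omega)} \leq \|u_n-u_m\|_{L^p(\Omega)}^{\theta}\|u_n-u_m\|_{L^{p_*}(\Omega)}^{1-\theta},
\end{equation*}
with the $L^{p_*}$-bound furnished by~\cref{sobolev2}. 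When $sp=N$ the same argument works with any finite $q>r$ in place of $p_*$, since~\cref{sobolev1} grants the embedding into every such $L^q$. When $sp>N$, the Morrey-type embedding $W^{s,p}(\Omega)\hookrightarrow C^{0,s-N/p}(\overline{\Omega})$ combined with Arzel\`a--Ascoli delivers uniform convergence along a subsequence, hence $L^r$-convergence for every $r\in[1,\infty)$. The principal technical obstacle is the uniform translation estimate in the middle paragraph; once it is in hand, the remaining interpolation and compactness arguments are routine.
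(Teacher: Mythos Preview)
Your argument is correct and follows the standard route to compact Sobolev--Slobodecki\u{\i} embeddings: a uniform $L^p$ translation estimate obtained by mollification and the identity $\int_{\RR^N}\|\tau_y v-v\|_{L^p}^p\,|y|^{-N-sp}\,dy=[v]_{W^{s,p}}^p$, then Fr\'echet--Kolmogorov--Riesz for $L^p$ compactness, then H\"older / Lyapunov interpolation against the continuous Sobolev embedding to reach the full range of $r$, with Arzel\`a--Ascoli in the supercritical case. The cutoff step is harmless because multiplication by a fixed $C^\infty_c$ function is bounded on $W^{s,p}(\RR^N)$, and your treatment of $W^{s,p}_0(\Om)$ matches the paper's definition (functions in $W^{s,p}(\RR^N)$ vanishing outside $\Om$), so no extension operator is needed there.

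There is, however, nothing to compare against: in the paper this proposition is a cited preliminary result (\cite[Proposition~2.1]{hanCompactSobolevSlobodeckijEmbeddings2022}) and no proof is given. You have supplied a self-contained argument where the paper simply quotes the literature; the approaches cannot be contrasted because the paper does not offer one.
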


We end this section with a Poincar\'e inequality for Gagliardo seminorms.

\begin{proposition}(\cite[Lemma 2.4]{brascoFractionalCheegerProblem2014})
	\label{poincare}
	Let $1\leq p<\infty$, $s\in (0,1)$ and $\Omega$ is an open and bounded set in $\RR^N$. Then, for every $u\in W^{s,p}_0(\Om)$, it holds that
	\begin{align*}
		||u||_{L^p(\Om)}^p\leq C(N,s,p,\Om)\,[u]_{W^{s,p}(\RR^N)}^p
	\end{align*}
\end{proposition}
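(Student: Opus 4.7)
The statement is the fractional Poincar\'e inequality for $W^{s,p}_0(\Om)$, which I would prove by a short direct argument that exploits the vanishing of $u$ on $\RR^N\setminus\Om$. The advantage of this approach over a Sobolev-embedding-based proof is that it avoids any restriction on the relative size of $sp$ and $N$, so the same computation gives the result uniformly in all three regimes ($sp<N$, $sp=N$, $sp>N$). The idea is simply to restrict the full Gagliardo double integral over $\RR^N\times\RR^N$ to a set on which $u(y)=0$ and then estimate the resulting kernel by a purely radial one.

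To set things up, since $\Om$ is bounded I would fix any point $x_0\in\Om$ and set $R=\diam(\Om)$, so that $\Om\subset\overline{B_R(x_0)}$ and therefore $u\equiv 0$ on $\RR^N\setminus B_R(x_0)$. The key geometric observation is that for any $x\in\Om$ and any $y\in\RR^N\setminus B_{2R}(x_0)$ one has
\begin{align*}
|x-y|\leq |x-x_0|+|x_0-y|\leq R+|x_0-y|\leq \tfrac{3}{2}|x_0-y|,
\end{align*}
so $|x-y|^{-N-sp}\geq (2/3)^{N+sp}|x_0-y|^{-N-sp}$ uniformly in $x\in\Om$. Dropping the rest of the double integral and using this pointwise comparison gives
\begin{align*}
[u]_{W^{s,p}(\RR^N)}^p \;\geq\; \iint_{\Om\times(\RR^N\setminus B_{2R}(x_0))}\frac{|u(x)|^p}{|x-y|^{N+sp}}\,dx\,dy \;\geq\; \left(\tfrac{2}{3}\right)^{N+sp}\!\!\int_\Om |u(x)|^p\,dx\,\cdot\int_{\RR^N\setminus B_{2R}(x_0)}\frac{dy}{|x_0-y|^{N+sp}}.
\end{align*}
The last radial integral, after polar coordinates, equals $\omega_{N-1}/(sp)\cdot (2R)^{-sp}$, a finite constant depending only on $N,s,p,\diam(\Om)$. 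Rearranging yields $\|u\|_{L^p(\Om)}^p\leq C(N,s,p,\Om)[u]_{W^{s,p}(\RR^N)}^p$ with an explicit constant.

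There is really no hard step: the argument is a one-shot estimate and requires no density, mollification, or embedding. The only bookkeeping point to watch is that the constant should be expressible in terms of $\diam(\Om)$ (or equivalently a bounding ball) rather than finer geometric data of $\Om$, and that the geometric comparison between $|x-y|$ and $|x_0-y|$ holds uniformly for all $x\in\Om$ and all $y$ outside the enlarged ball $B_{2R}(x_0)$. Once this is in place the conclusion is immediate, and the proof works verbatim for every $1\leq p<\infty$ and every $s\in(0,1)$.
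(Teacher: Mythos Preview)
Your argument is correct and is essentially the standard direct proof of the fractional Poincar\'e inequality; in fact it is the same computation that appears in the cited reference \cite[Lemma~2.4]{brascoFractionalCheegerProblem2014}. The paper itself does not supply a proof but simply quotes the result from that reference, so there is nothing further to compare.
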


\subsection{Technical Lemmas}

In this subsection, we collect some technical lemmas regarding the functions $b$ and $\phi$ as proved in \cite{bogeleinDoublyNonlinearEquations2018}. The inequalities hold for those arguments for which $b'$ is defined.

\begin{lemma}(\cite[Lemma 2.1]{bogeleinDoublyNonlinearEquations2018})
    \label{techlem1}
    For $b:[0,\infty)\to[0\infty)$ which is continuous and piecewise $C^1$ and satisfies \cref{eq:growth_b}, the following estimates are true for every $\lambda>1$ and $u>0$:
    \begin{enumerate}
        \item $\lambda^l b(u)\leq b(\lambda u)\leq \lambda^m b(u).$
        \item $b(1)\min\{u^l,u^m\}\leq b(u)\leq b(1)\max\{u^l,u^m\}$.
        \item $\frac{l}{m}\lambda^{l-1}b'(u)\leq b'(\lambda u)\leq \frac{m}{l}\lambda^{m-1}b'(u)$.
        \item $\lambda^{l+1}\phi(u)\leq \phi(\lambda u)\leq \lambda^{m+1}\phi(u)$.
        \item $\lambda^{\frac{m+1}{m}}\phi^*(u)\leq \phi^*(\lambda u)\leq \lambda^{\frac{l+1}{l}}\phi^*(u)$.
    \end{enumerate}
\end{lemma}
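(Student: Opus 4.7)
The plan is to deduce all five estimates from the growth condition \cref{eq:growth_b}, with item~(1) serving as the cornerstone and item~(5) requiring an additional duality step.

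\textbf{Item (1): the cornerstone.} The crucial observation is that \cref{eq:growth_b} rewrites as $l/u \leq (\log b(u))' \leq m/u$ on every subinterval where $b$ is $C^1$. Since $b$ is continuous, piecewise $C^1$, and strictly positive on $(0,\infty)$ by \cref{eq:bound_below_b}, the function $\log b$ is absolutely continuous on $[u,\lambda u]$, so
\[
\log b(\lambda u) - \log b(u) \;=\; \int_u^{\lambda u}\frac{b'(t)}{b(t)}\,dt \;=\; \int_u^{\lambda u}\frac{tb'(t)/b(t)}{t}\,dt,
\]
and sandwiching the integrand between $l/t$ and $m/t$ yields $l\log\lambda \leq \log(b(\lambda u)/b(u)) \leq m\log\lambda$, which is (1).

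\textbf{Items (2)--(4).} For (2), I would apply (1) with $(\lambda,u)\mapsto(u,1)$ when $u\geq 1$ and with $(\lambda,u)\mapsto(1/u,u)$ when $0<u<1$, reading off the correct minimum/maximum depending on which of $u^l,u^m$ is larger. For (3), \cref{eq:growth_b} directly gives $b'(u)\in [lb(u)/u,\,mb(u)/u]$ whenever $b'(u)$ exists, and combining the endpoint estimates at $u$ and at $\lambda u$ with (1) yields (3) after short arithmetic in $l,m,\lambda$. For (4), the change of variables $s=\lambda t$ gives $\phi(\lambda u)=\lambda\int_0^u b(\lambda t)\,dt$, and inserting (1) pointwise inside the integral produces the $\lambda^{l+1}$ and $\lambda^{m+1}$ bounds.

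\textbf{Item (5): the main obstacle.} Here the bounds on $\phi$ and $b$ must be transferred to the convex conjugate $\phi^*$, and the exponents that appear are the H\"older conjugates of $l+1$ and $m+1$. The strategy is as follows.
\begin{enumerate}
    \item[(a)] From \cref{eq:growth_b} we have $b'(u)\geq lb(u)/u>0$ wherever $b'$ is defined, so $b$ is strictly increasing. Combined with continuity and the unboundedness from (2), $b\colon[0,\infty)\to[0,\infty)$ is a continuous bijection with strictly increasing inverse $b^{-1}$.
    \item[(b)] Translate (1) to a growth estimate for $b^{-1}$. Substituting $v=b(u)$ in $b(\lambda u)\geq \lambda^l b(u)$ and using monotonicity of $b^{-1}$ yields $b^{-1}(\lambda^l v)\leq \lambda b^{-1}(v)$; setting $\mu=\lambda^l\geq 1$ gives $b^{-1}(\mu v)\leq \mu^{1/l}b^{-1}(v)$ for $\mu\geq 1$, and symmetrically $b^{-1}(\mu v)\geq \mu^{1/m}b^{-1}(v)$ for $\mu\geq 1$.
    \item[(c)] Identify $\phi^*$ as the primitive of $b^{-1}$. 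Since $\phi'=b$ is strictly increasing and continuous, the supremum in \cref{conjconv} is attained at $u=b^{-1}(w)$, so $\phi^*(w)=wb^{-1}(w)-\phi(b^{-1}(w))$, and differentiation collapses the cross terms to give $(\phi^*)'=b^{-1}$; hence $\phi^*(w)=\int_0^w b^{-1}(s)\,ds$.
    \item[(d)] Repeat the proof of (4) with $b^{-1}$ in place of $b$ and exponents $(1/m,1/l)$ in place of $(l,m)$: change variables $s=\lambda t$ in $\phi^*(\lambda u)=\lambda\int_0^u b^{-1}(\lambda t)\,dt$ and apply the estimates from (b) pointwise, yielding $\lambda^{1/m+1}\phi^*(u)\leq \phi^*(\lambda u)\leq \lambda^{1/l+1}\phi^*(u)$, which is (5).
\end{enumerate}
The remainder is bookkeeping of exponents.
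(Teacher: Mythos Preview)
Your proposal is correct. The paper does not supply its own proof of this lemma; it merely quotes the statement from \cite{bogeleinDoublyNonlinearEquations2018} and refers the reader there, so there is nothing in the present paper to compare your argument against.

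One small technical remark on your step~(c) for item~(5): you justify $(\phi^*)'=b^{-1}$ by differentiating $\phi^*(w)=w\,b^{-1}(w)-\phi(b^{-1}(w))$, which tacitly assumes $b^{-1}$ is differentiable. Since $b$ is only piecewise $C^1$, $b^{-1}$ need not be differentiable at the images of the corner points. The conclusion $\phi^*(w)=\int_0^w b^{-1}(s)\,ds$ is nonetheless correct and follows directly from Young's theorem for complementary N-functions (using only that $b$ is continuous and strictly increasing with $b(0)=0$), so this is easy to patch. Alternatively, one can bypass $b^{-1}$ entirely: taking Legendre conjugates of the inequality $\phi(\lambda u)\le \lambda^{m+1}\phi(u)$ and using the elementary identities $(\phi(\lambda\,\cdot\,))^*(w)=\phi^*(w/\lambda)$ and $(c\phi)^*(w)=c\,\phi^*(w/c)$ gives, after the substitution $\mu=\lambda^m$, exactly $\phi^*(\mu x)\ge \mu^{(m+1)/m}\phi^*(x)$; the upper bound follows symmetrically from the lower bound in~(4). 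Either route closes item~(5).
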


\begin{lemma}(\cite[Lemma 2.3]{bogeleinDoublyNonlinearEquations2018})
    \label{techlem2}
    With the standard assumptions on $b$, we have 
    \[\frac{1}{m+1}ub(u)\leq \phi(u)\leq \frac{1}{l}\phi^*(b(u))\leq \frac{m}{l(m+1)}ub(u)\] for any $u\geq 0$.
\end{lemma}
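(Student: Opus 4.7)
The plan is to reduce all three inequalities to a single identity, namely the integration-by-parts formula
\[
\phi(u) \;=\; \int_0^u b(s)\,ds \;=\; ub(u) - \int_0^u s\,b'(s)\,ds,
\]
combined with the observation that $\phi^*(b(u)) = ub(u) - \phi(u)$ (this is just the equality case of Fenchel's inequality stated right before the lemma). Together these give the clean formula $\phi^*(b(u)) = \int_0^u s\,b'(s)\,ds$. The growth hypothesis \cref{eq:growth_b}, which says $l\,b(s) \leq s\,b'(s) \leq m\,b(s)$ wherever $b'$ exists, then controls this integral directly in terms of $\phi(u) = \int_0^u b(s)\,ds$, yielding the two-sided bound
\[
l\,\phi(u) \;\leq\; \phi^*(b(u)) \;\leq\; m\,\phi(u).
\]

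From this pair of inequalities, the three claims of the lemma fall out algebraically. First, $\phi^*(b(u)) \leq m\phi(u)$ rewritten using $\phi^*(b(u)) = ub(u) - \phi(u)$ gives $ub(u) \leq (m+1)\phi(u)$, which is the leftmost inequality $\tfrac{1}{m+1}ub(u)\leq \phi(u)$. Second, $l\phi(u) \leq \phi^*(b(u))$ is exactly the middle inequality $\phi(u)\leq \tfrac{1}{l}\phi^*(b(u))$. Third, for the rightmost inequality, I would combine $\phi^*(b(u)) = ub(u) - \phi(u)$ with the already established lower bound $\phi(u)\geq \tfrac{1}{m+1}ub(u)$ to get $\phi^*(b(u))\leq \tfrac{m}{m+1}ub(u)$, which after dividing by $l$ gives precisely $\tfrac{1}{l}\phi^*(b(u))\leq \tfrac{m}{l(m+1)}ub(u)$.

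The only real subtlety is that $b$ is only assumed to be piecewise $C^1$, so $b'$ may fail to exist on a finite set of points. This is harmless for both the integration-by-parts step and for applying $l\,b(s)\leq sb'(s)\leq m\,b(s)$ inside the integral, since on each $C^1$ piece the identity and inequality are classical and the finitely many exceptional endpoints have measure zero; one can either sum the pieces or note that $b$, being continuous and monotone (which follows from $b'\geq 0$ a.e.\ together with \cref{eq:bound_below_b}), is absolutely continuous, so the fundamental theorem of calculus and integration by parts apply without fuss. No real obstacle is expected; the proof is essentially a three-line manipulation once the identity $\phi^*(b(u)) = \int_0^u s\,b'(s)\,ds$ is written down.
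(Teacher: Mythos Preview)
Your argument is correct: the integration-by-parts identity $\phi^*(b(u))=\int_0^u s\,b'(s)\,ds$ together with the hypothesis $l\,b(s)\le s\,b'(s)\le m\,b(s)$ immediately yields $l\,\phi(u)\le\phi^*(b(u))\le m\,\phi(u)$, and the three claimed inequalities follow by the algebra you describe. The paper does not supply its own proof of this lemma---it is simply quoted from \cite[Lemma~2.3]{bogeleinDoublyNonlinearEquations2018}---so there is nothing to compare against here; your derivation is clean and would serve as a complete proof.
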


\begin{lemma}(\cite[Lemma 2.4]{bogeleinDoublyNonlinearEquations2018})
    \label{techlem3}
    With the standard assumptions on $b$, we have for all $u,v\geq 0$ the following estimates
    \[\phi^*(b(u))\leq 2\mathfrak{b}[u,v]+2^{m+2}\phi(v),\mbox{ and}\] 
    \[\phi(v)\leq 2\mathfrak{b}[u,v]+2^{2+\frac{1}{l}}\phi^*(b(u)).\]
\end{lemma}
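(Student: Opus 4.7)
Both inequalities are direct consequences of the Fenchel inequality \cref{fenchel} applied to a suitably rescaled splitting of the product $b(u)v$, combined with the doubling estimates for $\phi$ and $\phi^*$ collected in parts (4) and (5) of \cref{techlem1}. The starting point in each case is the identity
\[
\mathfrak{b}[u,v] = \phi(v) + \phi^*(b(u)) - b(u)v,
\]
already recorded in \cref{boundary1}, so everything reduces to estimating the cross term $b(u)v$ in the ``right'' direction for each inequality.

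\textbf{First inequality.} To bound $\phi^*(b(u))$ I want $b(u)v$ majorized by a small multiple of $\phi^*(b(u))$ plus a big multiple of $\phi(v)$, so I split
\[
b(u)\, v = (2v)\cdot \tfrac{b(u)}{2} \leq \phi(2v) + \phi^*\!\bigl(\tfrac{b(u)}{2}\bigr)
\]
by Fenchel. By \cref{techlem1}(4) with $\lambda=2$, $\phi(2v)\leq 2^{m+1}\phi(v)$. By \cref{techlem1}(5), replacing $u$ by $b(u)/2$ and $\lambda$ by $2$,
\[
\phi^*\!\bigl(\tfrac{b(u)}{2}\bigr) \leq 2^{-\frac{m+1}{m}}\phi^*(b(u)) \leq \tfrac{1}{2}\phi^*(b(u)),
\]
where the last step uses $(m+1)/m\geq 1$. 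Plugging back into the identity for $\mathfrak{b}[u,v]$ gives
\[
\mathfrak{b}[u,v] \geq \phi(v) + \phi^*(b(u)) - 2^{m+1}\phi(v) - \tfrac{1}{2}\phi^*(b(u)),
\]
and absorbing the $\tfrac{1}{2}\phi^*(b(u))$ term on the left, then multiplying by $2$, yields exactly $\phi^*(b(u))\leq 2\mathfrak{b}[u,v] + 2^{m+2}\phi(v)$ (after bounding $2(2^{m+1}-1)\leq 2^{m+2}$).

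\textbf{Second inequality.} Now I want $b(u)v$ controlled by a small multiple of $\phi(v)$ plus a big multiple of $\phi^*(b(u))$, so I split the product the other way:
\[
b(u)\, v = \tfrac{v}{2}\cdot (2b(u)) \leq \phi\!\bigl(\tfrac{v}{2}\bigr) + \phi^*(2b(u)).
\]
From \cref{techlem1}(4), $\phi(v/2)\leq 2^{-(l+1)}\phi(v)\leq \tfrac{1}{2}\phi(v)$ since $l+1\geq 1$, and from \cref{techlem1}(5), $\phi^*(2b(u))\leq 2^{(l+1)/l}\phi^*(b(u)) = 2^{1+1/l}\phi^*(b(u))$. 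Substituting into the identity for $\mathfrak{b}[u,v]$ gives
\[
\mathfrak{b}[u,v] \geq \phi(v) + \phi^*(b(u)) - \tfrac{1}{2}\phi(v) - 2^{1+1/l}\phi^*(b(u)),
\]
and absorbing $\tfrac{1}{2}\phi(v)$ on the left and multiplying by $2$ produces $\phi(v) \leq 2\mathfrak{b}[u,v] + 2^{2+1/l}\phi^*(b(u))$ as claimed.

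\textbf{Where the work is.} There is no real obstacle; the entire argument is a one-parameter Young-type splitting and the only decision is the splitting parameter, which I take to be $\lambda = 2$ in each case. The mild subtlety is verifying that the doubling exponent in \cref{techlem1}(5), namely $(m+1)/m$ for scaling down and $(l+1)/l$ for scaling up, together with \cref{techlem1}(4), actually produces a coefficient at most $\tfrac{1}{2}$ on the term to be absorbed; both checks reduce to the trivial inequalities $m+1\geq m$ and $l+1\geq 1$, which hold under \cref{eq:growth_b}.
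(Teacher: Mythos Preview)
Your proof is correct. The paper does not actually prove this lemma but merely cites \cite[Lemma 2.4]{bogeleinDoublyNonlinearEquations2018}; your argument via the Fenchel splitting $b(u)v=(2v)\cdot\tfrac{b(u)}{2}$ (resp.\ $\tfrac{v}{2}\cdot 2b(u)$) together with the doubling exponents from \cref{techlem1}(4)--(5) is exactly the standard route and all the numerical checks you make are valid under the hypothesis $m\ge l>0$.
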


\begin{lemma}(\cite[Lemma 2.5]{bogeleinDoublyNonlinearEquations2018})
    \label{techlem4}
    With the standard assumptions on $b$, there exists a constant $c=c(m,l)$ such that for all $u,v\geq 0$, the following estimates are true:
    \begin{align*}
        \mathfrak{b}[u,v]&\leq (b(v)-b(u))(v-u)\\
                         &\leq |\sqrt{vb(v)}-\sqrt{ub(u)}|^2\\
                         &\leq c|\sqrt{\phi(v)}-\sqrt{\phi(u)}|^2\leq c^2\mathfrak{b}[u,v].
    \end{align*}
\end{lemma}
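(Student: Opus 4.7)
The plan is to prove the four inequalities of the chain in turn. For the first, $\mathfrak{b}[u,v] \leq (b(v)-b(u))(v-u)$, the tangent-line inequality for the convex function $\phi$ (with $\phi' = b$) at $v$ gives $\phi(v) - \phi(u) \leq b(v)(v-u)$; subtracting $b(u)(v-u)$ from both sides yields the claim. The second, $(b(v)-b(u))(v-u) \leq |\sqrt{vb(v)}-\sqrt{ub(u)}|^2$, follows by expanding and noting that the difference of the two sides is $ub(v) + vb(u) - 2\sqrt{(ub(v))(vb(u))} \geq 0$ by AM--GM. For the third, $|\sqrt{vb(v)}-\sqrt{ub(u)}|^2 \leq c|\sqrt{\phi(v)}-\sqrt{\phi(u)}|^2$, I set $f(t) := \sqrt{tb(t)}$ and $g(t):= \sqrt{\phi(t)}$, both monotone and $C^1$ on $(0, \infty)$, and compute
\[
\frac{f'(t)}{g'(t)} = \Bigl(1+\frac{tb'(t)}{b(t)}\Bigr)\sqrt{\frac{\phi(t)}{tb(t)}} \leq \frac{m+1}{\sqrt{l+1}},
\]
using the growth hypothesis $tb'(t)/b(t) \leq m$ and the bound $\phi(t)/(tb(t)) \leq 1/(l+1)$ extracted from \cref{techlem2} (since $l\phi \leq \phi^*(b(u)) = ub(u) - \phi(u)$ rearranges to $(l+1)\phi \leq ub$). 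Integrating by monotonicity of $f, g$ and squaring then gives the inequality with $c = (m+1)^2/(l+1)$.

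For the fourth, $|\sqrt{\phi(v)}-\sqrt{\phi(u)}|^2 \leq c\,\mathfrak{b}[u,v]$, the case $u = 0$ is immediate since $\mathfrak{b}[0,v] = \phi(v)$. For $u > 0$ I set $r := v/u$, $\psi(r) := \phi(ur)/\phi(u)$, $k := ub(u)/\phi(u)$, so the inequality becomes $N(r) \leq c\,D(r)$ where $N(r) := (\sqrt{\psi(r)}-1)^2$ and $D(r) := \psi(r) - 1 - k(r-1)$ both vanish at $r = 1$ with $D \geq 0$ by convexity. From \cref{techlem1} and \cref{techlem2} one derives $\psi(r) \in [r^{l+1}, r^{m+1}]$, $\psi'(r) \in [kr^l, kr^m]$ for $r \geq 1$ (with the bounds on $\psi$ reversed for $r \leq 1$), and $k \in [l+1, m+1]$. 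For $r \geq 1$, the pointwise ratio
\[
\frac{N'(r)}{D'(r)} = \frac{1 - 1/\sqrt{\psi(r)}}{1 - k/\psi'(r)} \leq \frac{1 - r^{-(m+1)/2}}{1 - r^{-l}} \leq \max\bigl(1,\, \tfrac{m+1}{2l}\bigr),
\]
where the last bound is obtained by elementary monotonicity analysis of $(1-r^{-\alpha})/(1-r^{-\beta})$ in $r$. Integration from $r = 1$ delivers the bound on $[1,\infty)$.

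The delicate subcase is $r \in [0, 1]$, where $|N'|/|D'|$ is not uniformly bounded as $r \to 0$, so I would split $[0, 1] = [0, r_0] \cup [r_0, 1]$ with $r_0 := l/(2(l+1))$. On $[r_0, 1]$ the ratio $|N'|/|D'|$ is continuous (with a removable singularity at $r = 1$ by L'Hopital, since $N''(1)/D''(1) = k/(2(ub'/b)) < \infty$) on this compact set and hence uniformly bounded, yielding $N(r) \leq C(l, m)\,D(r)$ by integration from $r = 1$. On $[0, r_0]$ one uses the direct estimate $N(r) \leq 1$ (since $\sqrt{\psi(r)} \in [0, 1]$) together with the monotonicity of $D$ on $[0, 1]$ and the elementary lower bound $D(r) \geq D(r_0) \geq (l+1)(1-r_0) - 1 = l/2$, giving $N/D \leq 2/l$. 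Taking $c$ to be the largest constant arising across (3) and (4) closes the chain, with the $c^2$ in the final estimate appearing as the product. The main obstacle is (4); the regime $v \ll u$ requires combining a direct endpoint estimate at $r = 0$ with a derivative comparison on a compact intermediate interval, hence the careful choice of the splitting point $r_0$.
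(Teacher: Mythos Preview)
The paper does not prove this lemma; it merely cites \cite[Lemma 2.5]{bogeleinDoublyNonlinearEquations2018}. So there is no in-paper proof to compare against, and I evaluate your argument on its own.

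Steps (1), (2), and (3) are correct and clean. In step (4), the case $r\geq 1$ and the case $r\in[0,r_0]$ are handled correctly with constants depending only on $l,m$. The gap is in the intermediate range $[r_0,1]$: you argue that $|N'|/|D'|$ extends continuously to the compact interval $[r_0,1]$ and is therefore bounded. But this bound a priori depends on $u$, since $\psi$ and $k$ (and hence $N$, $D$) depend on $u$; compactness in $r$ alone does not yield a constant $C(l,m)$ uniform in $u$. (Your L'H\^opital computation at $r=1$ also tacitly assumes $b'(u)$ exists, which need not hold for every fixed $u$ since $b$ is only piecewise $C^1$.)

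The fix is immediate and uses exactly the power bounds you already recorded: for $r\in(0,1)$ one has $\psi(r)\in[r^{m+1},r^{l+1}]$ and $\psi'(r)\in[kr^m,kr^l]$, whence
\[
\frac{|N'(r)|}{|D'(r)|}=\frac{\bigl(\psi(r)^{-1/2}-1\bigr)\,\psi'(r)}{k-\psi'(r)}
\le \frac{\bigl(r^{-(m+1)/2}-1\bigr)\,r^l}{1-r^l},
\]
which is a function of $r$ alone, continuous on $[r_0,1)$ with finite limit $(m+1)/(2l)$ at $r=1$, and hence bounded on $[r_0,1]$ by a constant depending only on $l,m$. With this replacement your argument is complete.
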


\begin{lemma}(\cite[Lemma 2.6]{bogeleinDoublyNonlinearEquations2018})
    \label{convergence1} 
    Let the standard assumptions on $b$ hold. Let $(u_i)_{i\in\NN}$ be a sequence in $L^\phi(\Om)$ and $u_0\in L^\phi(\Om)$ such that $\phi(u_i)\to\phi(u_0)$ in $L^1(\Om)$. Then, there holds $u_i\to u_0$ in $L^\phi(\Om)$.
\end{lemma}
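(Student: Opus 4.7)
The plan is to derive Orlicz-norm convergence in three stages: extract a subsequence along which $u_i \to u_0$ almost everywhere, upgrade to modular convergence $\int_\Om \phi(|u_i - u_0|)\,dx \to 0$ via Vitali's theorem, and pass from modular to norm convergence by exploiting the $\Delta_2$ behaviour of both $\phi$ and $\phi^*$ recorded in~\cref{techlem1}. A standard Urysohn argument then promotes subsequential norm convergence to convergence of the full sequence, since any subsequence of $(u_i)$ again satisfies the hypotheses and therefore admits a further subsequence converging to the same limit $u_0$ in $L^\phi(\Om)$.

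The first step is routine: $L^1(\Om)$ convergence of $\phi(u_i)$ yields a subsequence (not relabeled) with $\phi(u_i) \to \phi(u_0)$ a.e., and since $b(u) > 0$ for $u > 0$ by~\cref{eq:bound_below_b}, the primitive $\phi$ is continuous and strictly increasing on $[0,\infty)$, so continuity of $\phi^{-1}$ gives $u_i \to u_0$ a.e. For the modular step, convexity of $\phi$ with $\phi(0)=0$ together with the doubling estimate $\phi(2u) \leq 2^{m+1}\phi(u)$ from~\cref{techlem1}(4) produces the pointwise bound
\[ \phi(|u_i - u_0|) \leq \phi(u_i + u_0) \leq 2^m\bigl(\phi(u_i) + \phi(u_0)\bigr). \]
Convergence of $\phi(u_i)$ to $\phi(u_0)$ in $L^1(\Om)$ makes the right-hand side uniformly integrable on the finite-measure set $\Om$, hence so is the left-hand side. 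Combined with $\phi(|u_i - u_0|) \to 0$ a.e., Vitali's theorem yields $\int_\Om \phi(|u_i - u_0|)\,dx \to 0$.

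To transfer this modular convergence to Orlicz-norm convergence, the $\Delta_2$ estimate of~\cref{techlem1}(4) upgrades smallness at scale $1$ to every fixed scale: $\int_\Om \phi(\lambda|u_i-u_0|)\,dx \to 0$ for each $\lambda \geq 1$. Applying the scaled Fenchel inequality $|vw| \leq \phi(\lambda|v|) + \phi^*(|w|/\lambda)$, which follows from~\cref{fenchel}, inside the defining supremum~\cref{orlicznorm}, for any test function $w$ with $\int_\Om \phi^*(|w|)\,dx \leq 1$ we obtain
\[ \Bigl|\int_\Om (u_i - u_0)\,w\,dx\Bigr| \leq \int_\Om \phi(\lambda|u_i - u_0|)\,dx + \int_\Om \phi^*(|w|/\lambda)\,dx. \]
The reverse doubling $\phi^*(w/\lambda) \leq \lambda^{-(m+1)/m}\phi^*(w)$ extracted from~\cref{techlem1}(5) bounds the second term by $\lambda^{-(m+1)/m}$ uniformly in admissible $w$; given $\epsilon > 0$, first choose $\lambda$ so that this bound is below $\epsilon/2$, then send $i \to \infty$ to make the modular term below $\epsilon/2$, delivering $\|u_i - u_0\|_{L^\phi(\Om)} \leq \epsilon$ along the subsequence.

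The main obstacle lies precisely in this last passage from modular to norm convergence: it requires both halves of the $\Delta_2$ picture at once—doubling of $\phi$ to scale the modular freely, together with the reverse doubling of $\phi^*$ to make the dual integral $\int\phi^*(w/\lambda)$ uniformly small on the $\phi^*$-unit ball. Without this symmetric behaviour of the Orlicz pair, the bridge between $L^1$ convergence of $\phi(u_i)$ and convergence in the duality-defined norm~\cref{orlicznorm} would not close.
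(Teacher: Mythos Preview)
The paper does not supply its own proof of this lemma: it is quoted verbatim as \cite[Lemma~2.6]{bogeleinDoublyNonlinearEquations2018} among the list of technical lemmas in Subsection~2.5, with no argument given here. Your proof is correct and self-contained; the three stages (a.e.\ convergence via invertibility of $\phi$, modular convergence via the dominating bound $\phi(|u_i-u_0|)\le 2^m(\phi(u_i)+\phi(u_0))$ and Vitali, then norm convergence via the scaled Fenchel estimate and the $\Delta_2$ bounds of \cref{techlem1}) are exactly the standard route for this type of statement, and the Urysohn subsequence argument closes the loop correctly.
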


\begin{lemma}(\cite[Lemma 2.7]{bogeleinDoublyNonlinearEquations2018})
    \label{convergence2} 
    Let the standard assumptions on $b$ hold. Let $v\in L^1(\Om_T)$ is given with $\de_t v\in L^\phi(\Om)$. Then we have $v\in C^0([0,T];L^\phi(\Om))$.
\end{lemma}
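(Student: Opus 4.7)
My plan is to reduce the claim to a modular-continuity estimate and then upgrade it to norm continuity in $L^\phi$ via \cref{convergence1}. Since $\de_t v \in L^\phi(\Om_T)$ and $\phi$ grows at least like $u^{l+1}$ for large $u$ by \cref{techlem1}, on the bounded time–space cylinder $\Om_T$ we have $\de_t v \in L^1(\Om_T)$. Hence the fundamental theorem of calculus picks out a representative of $v$ (continuous as a map into $L^1(\Om)$) satisfying
\begin{equation*}
v(x,t_2)-v(x,t_1)=\int_{t_1}^{t_2}\de_t v(x,s)\,ds
\end{equation*}
for a.e.\ $x\in\Om$ and every $t_1,t_2\in[0,T]$. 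I work with this representative throughout.

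The key estimate is valid for $|t_2-t_1|\le 1$. Combining convexity of $\phi$ with $\phi(0)=0$ (which gives the subhomogeneity $\phi(\la w)\le \la\,\phi(w)$ for $\la\in[0,1]$) and Jensen's inequality applied to the average $\frac{1}{t_2-t_1}\int_{t_1}^{t_2}|\de_t v(x,s)|\,ds$, I obtain pointwise a.e.\ in $x$:
\begin{align*}
\phi(|v(x,t_2)-v(x,t_1)|)
 &\le \phi\left((t_2-t_1)\cdot\frac{1}{t_2-t_1}\int_{t_1}^{t_2}|\de_t v(x,s)|\,ds\right)\\
 &\le (t_2-t_1)\,\phi\left(\frac{1}{t_2-t_1}\int_{t_1}^{t_2}|\de_t v(x,s)|\,ds\right)\\
 &\le \int_{t_1}^{t_2}\phi(|\de_t v(x,s)|)\,ds.
\end{align*}
Integrating over $\Om$ and using Fubini gives
\begin{equation*}
\int_\Om\phi(|v(x,t_2)-v(x,t_1)|)\,dx \;\le\; \int_{t_1}^{t_2}\int_\Om\phi(|\de_t v(x,s)|)\,dx\,ds.
\end{equation*}

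Because the map $s\mapsto\int_\Om\phi(|\de_t v(\cdot,s)|)\,dx$ lies in $L^1(0,T)$ by hypothesis, absolute continuity of the Lebesgue integral forces the right-hand side to vanish as $|t_2-t_1|\to 0$. Thus $\phi(|v(\cdot,t_2)-v(\cdot,t_1)|)\to 0$ in $L^1(\Om)$, which is modular convergence of $v(\cdot,t_2)-v(\cdot,t_1)$ to $0$. Applying \cref{convergence1} to the sequence $u_i:=v(\cdot,t_2^{(i)})-v(\cdot,t_1)$ along any $t_2^{(i)}\to t_1$, with limit $u_0=0$ (so that $\phi(u_i)\to\phi(u_0)=0$ in $L^1(\Om)$), yields $v(\cdot,t_2^{(i)})\to v(\cdot,t_1)$ in $L^\phi(\Om)$, establishing $v\in C^0([0,T];L^\phi(\Om))$.

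The only technical point to watch is the scaling step $\phi(\la w)\le \la\,\phi(w)$, which is valid only for $\la\in[0,1]$; this forces the restriction $|t_2-t_1|\le 1$, but since continuity is a local property this causes no loss. The rest is the standard Jensen manipulation for convex Young functions, together with the nontrivial input from \cref{convergence1} that modular convergence implies $L^\phi$-norm convergence — this is where the $\Delta_2$-property of $\phi$ (encoded in \cref{techlem1}) is essential.
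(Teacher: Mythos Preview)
The paper does not supply a proof of this lemma; it merely cites \cite[Lemma~2.7]{bogeleinDoublyNonlinearEquations2018}. So there is no in-paper argument to compare against. Your approach---fundamental theorem of calculus, the subhomogeneity $\phi(\la w)\le\la\phi(w)$ for $\la\in[0,1]$ coming from convexity with $\phi(0)=0$, Jensen on the time-average, then upgrading modular continuity to norm continuity via \cref{convergence1}---is the standard one and is essentially how the cited reference proceeds.

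Two small points. First, when you invoke \cref{convergence1} you write $u_i:=v(\cdot,t_2^{(i)})-v(\cdot,t_1)$, but $\phi$ is only defined on $[0,\infty)$; strictly speaking you should apply the lemma to $|u_i|$ (which is what your modular estimate actually controls). This is cosmetic.

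Second, and more substantively: your argument proves continuity of $t\mapsto v(\cdot,t)$ but never checks that the values actually lie in $L^\phi(\Om)$. In fact the hypotheses as stated in this paper do \emph{not} guarantee this---take $v(x,t)=f(x)$ time-independent with $f\in L^1(\Om)\setminus L^\phi(\Om)$; then $\de_t v=0\in L^\phi(\Om_T)$ but $v(\cdot,t)\notin L^\phi(\Om)$ for any $t$. The original reference assumes in addition $v\in L^\phi(\Om_T)$, which (via Fubini) gives $v(\cdot,t_0)\in L^\phi(\Om)$ for a.e.\ $t_0$, and then your difference estimate propagates membership to every $t$. You should add that hypothesis (or note the gap in the statement as transcribed here) to make the argument complete.
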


\subsection{A lower bound on the nonlinear operator}

In this subsection, we derive a lower bound on the nonlinear and nonlocal operator by the use of Poincar\'e inequality (\cite[Lemma 2.4]{brascoFractionalCheegerProblem2014}).

Let $u\in W_{u_0}^{s,p}(\Om)$, then $u-u_0\in W^{s,p}_0(\Om)$. Therefore, by Poincar\'e inequality in the space $W_0^{s,p}(\Om)$, we get for any $t\in [0,T]$,

\begin{align*}
    ||u-u_0||_{L^p(\Om)}^p&\leq C [u-u_0]^p_{W^{s,p}(\RR^N)}\\
                          &\leq C [u]_{W^{s,p}(\RR^N)}^p + C [u_0]_{W^{s,p}(\RR^N)}^p\\
                          &\leq C \iint\limits_{\RR^N\times\RR^N} \frac{H(x,y,u(x,t)-u(y,t))}{|x-y|^N}\,dx\,dy+ C [u_0]_{W^{s,p}(\RR^N)}^p.
\end{align*}

As a result, we obtain
\begin{align}\label{coercivity1}
    ||u||_{L^p(\RR^N)}^p &\leq ||u-u_0||_{L^p(\Om)}^p+||u_0||_{(L^p(\RR^N))}^p\nonumber\\
    &\leq C_1 \iint\limits_{\RR^N\times\RR^N} \frac{H(x,y,u(x,t)-u(y,t))}{|x-y|^N}\,dx\,dy+ C_2 ||u_0||_{W^{s,p}(\RR^N)}^p,
\end{align} for some positive constants $C_1$ and $C_2$.

Once again, due to \cref{eq:bound_below_H}, we get
\begin{align}\label{coercivity2}
	||u||_{W^{s,p}(\RR^N)}^p \leq C_1 \iint\limits_{\RR^N\times\RR^N} \frac{H(x,y,u(x,t)-u(y,t))}{|x-y|^N}\,dx\,dy+ C_2 ||u_0||_{W^{s,p}(\RR^N)}^p.
\end{align}

\subsection{Regarding initial condition}

In this subsection, we prove that variational solutions satisfy the initial condition in the sense of $L^\phi(\Om)$.

\begin{lemma}
    Any variational solution as defined in \cref{defvar} satisfies the initial condition $u(0)=u_0$ in the $L^\phi(\Om)$ sense, i.e.,
    \begin{align*}
        \lim_{\tau\downarrow 0}||u(\tau)-u(0)||_{L^\phi(\Om)}=0
    \end{align*}
\end{lemma}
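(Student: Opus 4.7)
The plan is to test the variational inequality against the time-independent comparison map $v\equiv u_0$, and exploit the two-sided inequality relating $\mathfrak{b}[u,v]$ and $|\sqrt{\phi(u)}-\sqrt{\phi(v)}|^2$ from \cref{techlem4} to reduce the claim to a decay estimate on the boundary functional $\mathfrak{B}[u(\tau),u_0]$ as $\tau\downarrow 0$.

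First I would verify that $v\equiv u_0$ is admissible in \cref{defvar}: by \cref{datahypo} we have $u_0\in W^{s,p}(\mathbb{R}^N)$ with $u_0\big|_\Omega\in L^\phi(\Omega)$, while $\partial_t v\equiv 0$ and $v-u_0\equiv 0$ trivially lie in the required function spaces, and $v(0)=u_0\in L^\phi(\Omega)$. Inserting this $v$ into \cref{defvar} and using $\mathfrak{B}[u_0,u_0]=0$, the time-derivative term vanishes and we obtain
\[
\mathfrak{B}[u(\tau),u_0] + \int_0^\tau\iint\limits_{\mathbb{R}^N\times\mathbb{R}^N}\frac{H(x,y,u(x,t)-u(y,t))}{|x-y|^N}\,dx\,dy\,dt \leq \int_0^\tau\iint\limits_{\mathbb{R}^N\times\mathbb{R}^N}\frac{H(x,y,u_0(x)-u_0(y))}{|x-y|^N}\,dx\,dy\,dt.
\]
Since $H\ge 0$ by \cref{eq:bound_below_H}, the second term on the left is non-negative and can be dropped; the right-hand side equals $\tau\cdot K$ with $K:=\iint H(x,y,u_0(x)-u_0(y))/|x-y|^N\,dx\,dy$ finite by \cref{datahypo}. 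Consequently $\mathfrak{B}[u(\tau),u_0]\leq \tau K\to 0$ as $\tau\downarrow 0$.

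By \cref{techlem4} we then have $\int_\Omega |\sqrt{\phi(u(\tau))}-\sqrt{\phi(u_0)}|^2\,dx \leq c\,\mathfrak{B}[u(\tau),u_0]\to 0$, so $\sqrt{\phi(u(\tau))}\to\sqrt{\phi(u_0)}$ in $L^2(\Omega)$. To promote this to $\phi(u(\tau))\to\phi(u_0)$ in $L^1(\Omega)$ I would factor the difference as $(\sqrt{\phi(u(\tau))}-\sqrt{\phi(u_0)})(\sqrt{\phi(u(\tau))}+\sqrt{\phi(u_0)})$ and apply Cauchy--Schwarz; the only thing to check is a uniform $L^1$ bound on $\phi(u(\tau))$ for small $\tau$, which follows by combining \cref{techlem2} and \cref{techlem3}:
\[
\int_\Omega \phi(u(\tau))\,dx \leq \tfrac{1}{l}\int_\Omega \phi^*(b(u(\tau)))\,dx \leq \tfrac{2}{l}\mathfrak{B}[u(\tau),u_0] + \tfrac{2^{m+2}}{l}\int_\Omega \phi(u_0)\,dx,
\]
whose right-hand side is uniformly bounded for small $\tau$. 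Finally, invoking \cref{convergence1} yields $u(\tau)\to u_0$ in $L^\phi(\Omega)$. The main delicate step is the passage from $L^2$ convergence of the square roots to $L^1$ convergence of $\phi(u(\tau))$; everything else amounts to a mechanical chaining of the technical Orlicz-space lemmas together with the choice of the static comparison map.
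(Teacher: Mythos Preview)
Your proof is correct and follows essentially the same route as the paper: test \cref{defvar} with the constant comparison map $v\equiv u_0$, deduce $\mathfrak{B}[u(\tau),u_0]\to 0$, pass to $\phi(u(\tau))\to\phi(u_0)$ in $L^1(\Omega)$ via the factorization and Cauchy--Schwarz combined with \cref{techlem4}, and conclude by \cref{convergence1}. You are in fact slightly more careful than the paper in that you explicitly justify the uniform $L^1$ bound on $\phi(u(\tau))$ needed to control the second Cauchy--Schwarz factor, whereas the paper leaves this implicit.
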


\begin{proof}
    Taking the time-independent extension of $v(x,t)=u_0(x)$ as a comparision map in the definition of variational solutions \cref{defvar}, we get 
    \begin{align*}
        \mathfrak{B}[u(\tau),u_0]\leq \tau\sup_{t\in [0,T]}\iint\limits_{\RR^N\times\RR^N} \frac{H(x,y,u_0(x)-u_0(y))}{|x-y|^N}\,dx\,dy\to 0\mbox{ as }\tau\to 0.
    \end{align*}
    On the other hand
    \begin{align*}
        \int_{\Om} |\phi(u(\tau))-\phi(u_0)|\,dx&\leq \left(\int_{\Om} |\sqrt{\phi(u(\tau))}-\sqrt{\phi(u_0)}|^2\,dx\right)^{1/2}\left(\int_{\Om} |\sqrt{\phi(u(\tau))}+\sqrt{\phi(u_0)}|^2\,dx\right)^{1/2}\\
        &\leq C\left(\mathfrak{B}[u(\tau),u_0]\right)^{1/2}\to 0\mbox{ as }\tau\to 0.
    \end{align*} where in the last inequality, we have used \cref{techlem4}. As a result, we have $\phi(u(\tau))\to\phi(u_0)$ in $L^1(\Om)$. Therefore, by an application of \cref{convergence1}, we get $u(\tau)\to u_0$ in $L^\phi(\Om)$.
\end{proof}

\subsection{A discrete integration by parts formula}

We state a discrete integration by parts formula as proved in \cite{bogeleinDoublyNonlinearEquations2018}. For $h\in\RR\setminus\{0\}$, denote the difference quotient of a function $w$ with respect to time by $\Delta_h w$ and define as
\begin{align*}
    \Delta_h w(t):=\frac{1}{h}(w(t+h)-w(t)).
\end{align*}

\begin{lemma}(\cite[Lemma 2.10]{bogeleinDoublyNonlinearEquations2018})
    \label{discreteintbyparts} Let $h\in (0,1]$, $u,v\in L^\phi(\Om\times(-h,T+h))$ be two non-negative functions. Then, the following integration by parts formula is valid
    \begin{align*}
        \iint\limits_{\Om_T} \Delta_{-h}b(u)(v-u)\,dx\,dt&\leq \iint\limits_{\Om_T} \Delta_h v(b(v)-b(u))\,dx\,dt\\
        &\quad-\frac{1}{h}\iint\limits_{\Om\times(T-h,T)}\mathfrak{b}[u,v(t+h)]\,dx\,dt\\
        &\quad\quad+\frac{1}{h}\iint\limits_{\Om\times(-h,0)}\mathfrak{b}[u,v]\,dx\,dt+\delta_1(h)+\delta_2(h),
    \end{align*}
    where
    \begin{align*}
        \delta_1(h)&:=\frac{1}{h}\iint\limits_{\Om_T}\mathfrak{b}[v(t),v(t+h)]\,dx\,dt,\\
        \delta_2(h)&:=\iint\limits_{\Om\times(-h,0)} \Delta_h v(b(v(t+h))-b(u))\,dx\,dt.
    \end{align*}
    If, in addition, $\de_t v\in L^\phi(\Om\times(-h_0,T+h_0))$ for some $h_0>0$, then 
    \begin{align}
        \lim_{h\downarrow 0}\delta_1(h)=0\mbox{ and }\lim_{h\downarrow 0}\delta_2(h)=0.
    \end{align}
\end{lemma}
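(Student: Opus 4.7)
The plan is to reduce the claimed inequality to a pointwise algebraic identity in the boundary functional $\mathfrak{b}[\cdot,\cdot]$ and then integrate, using a single change of variable to extract the boundary contributions at $t=-h$ and $t=T-h$. The key algebraic tool is the representation $\mathfrak{b}[a,c] = \phi(c) + \phi^*(b(a)) - b(a)c$ from~\cref{boundary1}, which allows every cross product $b(a)c$ appearing in the expansion to be expressed in terms of $\phi$, $\phi^*$, and $\mathfrak{b}$.

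First I would expand the difference
\begin{align*}
h\bigl[\Delta_{-h} b(u)(v-u) - \Delta_h v(b(v) - b(u))\bigr]
\end{align*}
into six products of the form $b(\cdot)\,(\cdot)$ with arguments drawn from $u(t)$, $u(t-h)$, $v(t)$, $v(t+h)$ and apply the identity above to each. After collecting terms, all contributions involving $\phi$ and $\phi^*$ telescope and cancel, leaving the clean pointwise identity
\begin{align*}
h\bigl[\Delta_{-h} b(u)(v-u) - \Delta_h v(b(v) - b(u))\bigr] = \mathfrak{b}[u(t-h), v(t)] - \mathfrak{b}[u(t), v(t+h)] + \mathfrak{b}[v(t), v(t+h)] - \mathfrak{b}[u(t-h), u(t)].
\end{align*}
Since $\mathfrak{b}[u(t-h), u(t)] \geq 0$, discarding the last term gives a pointwise inequality. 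Integrating over $\Omega_T$ and performing the translation $t \mapsto t-h$ in $\mathfrak{b}[u(t-h), v(t)]$, the interior portions over $\Omega\times(0,T-h)$ cancel with the corresponding portion of $-\mathfrak{b}[u(t), v(t+h)]$, leaving precisely the boundary integrals over $\Omega\times(-h,0)$ and $\Omega\times(T-h,T)$ together with the term $\delta_1(h)$. However this version contains $\mathfrak{b}[u, v(t+h)]$ on $(-h,0)$, not the $\mathfrak{b}[u,v]$ demanded in the statement. To bridge this gap I apply the same expansion trick to $h\Delta_h v(b(v(t+h)) - b(u))$, obtaining the auxiliary identity
\begin{align*}
\mathfrak{b}[u, v(t+h)] = \mathfrak{b}[u, v] + h\Delta_h v(b(v(t+h)) - b(u)) - \mathfrak{b}[v(t+h), v(t)],
\end{align*}
and dropping the final non-negative term; integration over $\Omega\times(-h,0)$ produces exactly the correction $\delta_2(h)$.

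For the concluding assertion, under the extra regularity $\partial_t v \in L^\phi(\Omega\times(-h_0, T+h_0))$ vanishing of $\delta_1(h)$ follows from~\cref{techlem4}, which bounds $\mathfrak{b}[v(t), v(t+h)] \leq c\,|\sqrt{\phi(v(t+h))} - \sqrt{\phi(v(t))}|^2$; by~\cref{convergence2} one has $v \in C^0([-h_0,T+h_0];L^\phi(\Omega))$, so the resulting difference quotient tends to zero pointwise and is dominated in $L^1$ thanks to the $\Delta_2$-estimates of~\cref{techlem1}. The convergence $\delta_2(h) \to 0$ is simpler because the domain of integration has measure $h|\Omega|$ and the two factors $\Delta_h v$, $b(v(t+h)) - b(u)$ lie in dual Orlicz spaces under the regularity assumption, so a Fenchel-type estimate controls the product. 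The main obstacle is the bookkeeping in the pointwise identity: one must track the signs in all six cross-terms and confirm that every appearance of $\phi$ and $\phi^*$ cancels, leaving exactly the four $\mathfrak{b}$-terms above; once this algebra is carried out, the remaining integration and limit arguments are straightforward.
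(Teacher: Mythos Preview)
The paper does not actually prove this lemma; it is stated with a citation to \cite[Lemma~2.10]{bogeleinDoublyNonlinearEquations2018} and no argument is given. So there is no in-paper proof to compare against. Your approach is correct and is essentially the standard one: the pointwise identity you write down,
\[
h\bigl[\Delta_{-h} b(u)(v-u) - \Delta_h v(b(v) - b(u))\bigr]
= \mathfrak{b}[u(t-h), v(t)] - \mathfrak{b}[u(t), v(t+h)] + \mathfrak{b}[v(t), v(t+h)] - \mathfrak{b}[u(t-h), u(t)],
\]
is exactly what falls out after expanding with the representation $b(a)c = \phi(c)+\phi^*(b(a))-\mathfrak{b}[a,c]$ (I checked: all $\phi$ and $\phi^*$ terms cancel in pairs). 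Dropping $\mathfrak{b}[u(t-h),u(t)]\ge 0$, integrating, and shifting $t\mapsto t-h$ in the first $\mathfrak{b}$-term yields the claimed boundary contributions with $\mathfrak{b}[u,v(t+h)]$ on $(-h,0)$, and your auxiliary identity then converts this to $\mathfrak{b}[u,v]$ at the cost of $\delta_2(h)$, again after discarding a non-negative $\mathfrak{b}$-term.

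The only place your sketch is a bit thin is the justification that $\delta_1(h)\to 0$: bounding $\mathfrak{b}[v(t),v(t+h)]$ by $c|\sqrt{\phi(v(t+h))}-\sqrt{\phi(v(t))}|^2$ via \cref{techlem4} is fine, but to kill the factor $1/h$ you then need something like $\partial_t\sqrt{\phi(v)}\in L^2(\Omega_T)$, which does not follow immediately from $\partial_t v\in L^\phi$. A cleaner route is to use the first inequality in \cref{techlem4}, $\mathfrak{b}[v(t),v(t+h)]\le (b(v(t+h))-b(v(t)))(v(t+h)-v(t))$, write $v(t+h)-v(t)=\int_t^{t+h}\partial_t v$, and control the product by Fenchel's inequality in the Orlicz pair $(L^\phi,L^{\phi^*})$; the $1/h$ is then absorbed by the length of the inner integral. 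This is a minor technical point and does not affect the overall correctness of your plan.
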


\section{Compactness Theorem}

We shall require the following compactness theorem in order to identify the limit function of the approximating minimizers. It is the fractional analogue of \cite[Proposition 3.1]{bogeleinDoublyNonlinearEquations2018}.

\begin{proposition}
    \label{compactnesslemma}
    Let $\Om\subseteq \RR^N$ be a bounded domain, $p>1$, $T>0$ and $k\in\NN$. Suppose that $h_k:=\dfrac{T}{k}$ piecewise constant functions $u^{(k)}:\RR^N\times (-h_k,T]\to\RR$ are defined by 
    \begin{align*}
        u^{(k)}(\cdot,t):=u^{(k)}_i\mbox{ for }t\in ((i-1)h_k,\,ih_k)\mbox{ with }i=0,1,\ldots,k,
    \end{align*} where $u_i^{(k)}\in L^\phi(\Om)\cap W^{s,p}_{u_0}(\Om)$. Suppose further that there exists a constant $M>0$ such that the following energy estimate and continuity estimate hold true for all $k\in\NN$:
    \begin{align}\label{energybound1}
        \max\limits_{i\in\{0,1,\ldots,k\}}\left(\int_{\Omega} \phi(u^{k}_i)\,dx + \iint\limits_{\RR^N\times\RR^N}\frac{|u_i^{(k)}(x)-u_i^{(k)}(y)|^p}{|x-y|^{N+ps}}\,dx\,dy\right)\leq M, \mbox{ and }
    \end{align} 
    \begin{align}\label{continuitybound}
        \frac{1}{h_k}\sum_{i=1}^k \int_\Om \left| \sqrt{\phi(u_i^{(k)})}-\sqrt{\phi(u_{i-1}^{(k)})} \right|^2\,dx \leq M.
    \end{align} Moreover, $u^{(k)}\rightharpoonup u$ in $L^p(0,T;W^{s,p}_{u_0}(\Om))$-weak as $k\to\infty$. Then, there exists a subsequence such that as $k\to\infty$, we have
    \begin{align*}
        &\sqrt{\phi(u^{(k)})}\to\sqrt{\phi(u)} \mbox{ in } L^1(\Om_T)-\mbox{strong},\\
        &u^{(k)}\to u\mbox{ a.e. in }(0,T)\times\RR^N.
    \end{align*}
\end{proposition}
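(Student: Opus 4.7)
The approach is an Aubin--Lions--Simon type space-time compactness argument applied to the nonlinear quantity $\sqrt{\phi(u^{(k)})}$ rather than to $u^{(k)}$ itself, since the continuity bound \cref{continuitybound} is naturally phrased in terms of $\sqrt{\phi}$.

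First I would establish temporal equicontinuity of $\{\sqrt{\phi(u^{(k)})}\}$ in $L^2(\Om_T)$. For an increment of the form $h=Jh_k$ with $J\in\NN$, telescoping and the Cauchy--Schwarz inequality, combined with \cref{continuitybound}, yield
\begin{align*}
\int_0^{T-h}\int_\Om \big|\sqrt{\phi(u^{(k)}(t+h))}-\sqrt{\phi(u^{(k)}(t))}\big|^2\,dx\,dt \;\leq\; Mh^2,
\end{align*}
where the right-hand side arises because each one-step difference appears $J$ times under the double sum and the continuity bound contributes an extra factor $h_k$. A brief adjustment handles non-integer multiples of $h_k$. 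This delivers a uniform-in-$k$ $L^2$-temporal modulus of continuity, which in particular gives equicontinuity of temporal translations in $L^1(\Om_T)$.

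Second, I would prove pointwise-in-time spatial precompactness. Combining \cref{energybound1} with the Poincar\'e inequality \cref{poincare} applied to $u_i^{(k)}-u_0\in W^{s,p}_0(\Om)$ and the hypothesis on $u_0$ bounds $\{u_i^{(k)}\}$ uniformly in $W^{s,p}(\Om)$; the compact embedding \cref{compactembed} then yields precompactness in $L^p(\Om)$. The $L^1$ bound on $\phi(u_i^{(k)})$ (hence $L^2$ bound on $\sqrt{\phi(u_i^{(k)})}$) secures uniform integrability of $\sqrt{\phi(u_i^{(k)})}$ in $L^1(\Om)$ by Cauchy--Schwarz. So along any subsequence for which $u_i^{(k)}$ converges a.e., the corresponding $\sqrt{\phi(u_i^{(k)})}$ converges in $L^1(\Om)$, and consequently $\{\sqrt{\phi(u^{(k)}(t))}\}_k$ is $L^1(\Om)$-precompact for each $t\in(0,T]$.

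With these two ingredients, Simon's compactness theorem, applied with $B=L^1(\Om)$, produces a subsequence (not relabeled) along which $\sqrt{\phi(u^{(k)})}\to \Phi$ strongly in $L^1(\Om_T)$ and, after a further extraction, a.e.\ in $\Om_T$. Since $b>0$ on $(0,\infty)$ by \cref{eq:bound_below_b} makes $\phi$ strictly increasing with continuous inverse, this forces $u^{(k)}\to\phi^{-1}(\Phi^2)$ a.e. The hypothesis $u^{(k)}\rightharpoonup u$ in $L^p(0,T;W^{s,p}_{u_0}(\Om))$-weak together with uniqueness of limits then identifies $\Phi=\sqrt{\phi(u)}$, hence both claimed convergences. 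On $\Om^c$ both $u^{(k)}$ and $u$ agree with $u_0$, so the a.e.\ convergence of $u^{(k)}$ extends trivially to $(0,T)\times\RR^N$.

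The principal technical obstacle is bridging the two types of information: spatial precompactness is naturally available for $u^{(k)}$, while temporal regularity is naturally available only for the transformed quantity $\sqrt{\phi(u^{(k)})}$. The transfer uses continuity of $\sqrt{\phi}$ together with the $\Delta_2$ structure of $\phi$ (\cref{techlem1}) to ensure that a.e.\ convergence of $u^{(k)}$ at a fixed time upgrades to $L^1$ convergence of $\sqrt{\phi(u^{(k)})}$; Simon's theorem then stitches the time-slice compactness together with the temporal equicontinuity. A secondary bookkeeping point is the Cauchy--Schwarz step in the time-shift estimate, which must count each one-step increment the correct number of times.
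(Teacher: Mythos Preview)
Your strategy is sound and close in spirit to the paper's, but the packaging differs and one step needs a small repair. The paper does the compactness argument by hand rather than invoking Simon: it restricts to a dyadic subsequence $k=2^\beta$, introduces coarser piecewise-constant approximants $w_{k,\alpha}(t)=\sum_j u^{(k)}((j-1)\ve_\alpha)\chi_{((j-1)\ve_\alpha,j\ve_\alpha]}(t)$ on a grid of width $\ve_\alpha=T/2^\alpha$, applies Rellich (\cref{compactembed}) at the finitely many time nodes together with Vitali (via the $L^2$ bound on $\sqrt{\phi}$) to get $L^1(\Om_T)$ convergence of $\sqrt{\phi(w_{k,\alpha})}$ along a subsequence, runs a diagonal argument in $\alpha$, and then uses \cref{continuitybound} to show $\|\sqrt{\phi(u^{(k)})}-\sqrt{\phi(w_{k,\alpha})}\|_{L^1(\Om_T)}\le \ve_\alpha\sqrt{M|\Om_T|}$; this yields the Cauchy property. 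In other words, the paper reproves the relevant instance of Simon's criterion explicitly. The gap in your version is that Simon's characterisation in $L^1(0,T;B)$ requires precompactness of the \emph{time-integrals} $\{\int_{t_1}^{t_2}\sqrt{\phi(u^{(k)})}\,dt\}_k$ in $B=L^1(\Om)$, not merely of $\{\sqrt{\phi(u^{(k)}(t))}\}_k$ for each fixed $t$; pointwise-in-$t$ precompactness alone does not yield this. In your setting the repair is easy: the \emph{full} family $\{u_i^{(k)}\}_{i,k}$ is bounded in $W^{s,p}$ and hence precompact in $L^p(\Om)$, so $\{\sqrt{\phi(u_i^{(k)})}\}_{i,k}$ is precompact in $L^1(\Om)$ (a.e.\ convergence plus uniform integrability from the $L^2$ bound), and the piecewise-constant time-averages then lie in the compact closed convex hull (Mazur). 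With that addition your route is complete and a bit shorter. Your identification of the limit via ``a.e.\ limit $=$ weak limit'' is also more direct than the paper's Minty-type argument with the truncations $T_N$.
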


\begin{proof}
    Since we are working with subsequences, we may always begin with a subsequence where $k=2^{\beta}$ for $\beta\in\NN$. Then, for $\alpha\in\NN$, we define another piecewise constant function where each piece is of length $\ve_{\alpha}:=\frac{T}{2^\alpha}$
    \begin{align*}
        w_{k,\alpha}(t):=\sum_{j=1}^{2^{\alpha}} u^{(k)}((j-1)\ve_\alpha) \chi_{((j-1)\ve_\alpha,j\ve_\alpha]}(t).
    \end{align*}
    Then, depending on whether $\alpha\leq \beta$ or $\beta\leq\alpha$, respectively $\ve_\alpha$ is a multiple of $h_k$ or $h_k$ is a multiple of $\ve_\alpha$. For $i\in\{1,2,\ldots,2^{\beta}\}$, define 
    \begin{align*}
        i_{-}:=\left\lfloor \frac{h_k(i-1)}{\ve_\alpha} \right\rfloor \frac{\ve_\alpha}{h_k}+1 = \left\lfloor \frac{i-1}{2^{\beta-\alpha}}\right\rfloor 2^{\beta-\alpha}+1.
    \end{align*}

    Now, we claim that for a fixed $\alpha\in \NN$. there is a subsequence of $k=2^\beta$ labelled $k^{(\alpha)}$ for which 
    $\sqrt{\phi(w_{k^{(\alpha)},\alpha})}$ converges in $L^1(\Om_T)$ as $k^{(\alpha)}\to \infty$. For the proof, observe that by \cref{energybound1} and Poincar\'e inequality in $W^{s,p}_0(\Om)$, the sequence $(u^{(k)}((j-1)\ve_\alpha)-u_0)$ is uniformly bounded in $W^{s,p}_0(\Om)$. Therefore, by the fractional version of Rellich's compactness theorem (\cref{compactembed}), there is a subsequence for which $(u^{(k)}((j-1)\ve_\alpha)-u_0)$ converges in $L^p(\Om)$, and for a further subsequence, $(u^{(k)}((j-1)\ve_\alpha))$ converges pointwise a.e. in $\RR^N$ for any $j\in\{1,2,\ldots,2^\alpha\}$. As a consequence, $(w_{k,\alpha})$, whose summands are exactly $(u^{(k)}((j-1)\ve_\alpha))$, converges pointwise a.e. in $\RR^N$ along a subsequence labelled $k^{(\alpha)}$. As a result, by a standard application of dominated convergence theorem (which will use the uniform bound from \cref{energybound1}), we will get convergence of $\sqrt{\phi(w_{k,\alpha})}$ in $L^1(\Om_T)$ along the subsequence  $k^{(\alpha)}$.

    Hence we obtain convergence along a subsequence for each $\alpha\in\NN$.  Therefore, we can extract a diagonal subsequence $\varkappa$ of $k$ such that 
    \begin{align}
        \label{l1convergence}
        \sqrt{\phi(w_{k,\alpha})}\mbox{ converges in }L^1(\Om_T) \mbox{ as } \varkappa\to\infty\mbox{ for any }\alpha\in\NN.
    \end{align}

    Now, we wish to prove $L^1(\Om_T)$ convergence along a subsequence of the original sequence $\sqrt{\phi(u^{(k)})}$. To achieve this, we start with the following estimate, which is a consequence of the continuity estimate \cref{continuitybound}:
    \begin{align}\label{continuitybound2}
        \iint\limits_{\Om_T} \left| \sqrt{\phi(u^{(k)})}-\sqrt{\phi(w_{k,\alpha})} \right|\,dx\,dt & = h_k \sum_{i=1}^{2^\beta} \int\limits_\Om \left| \sqrt{\phi(u^{(k)}_i)}-\sqrt{\phi(u^{(k)}_{i_{-}})} \right|\,dx\nonumber\\
        & \leq h_k \sum_{i=1}^{2^\beta} \sum_{j=i_{-}+1}^i \int\limits_\Om \left| \sqrt{\phi(u^{(k)}_j)}-\sqrt{\phi(u^{(k)}_{j-1})} \right|\,dx\nonumber\\
        & \leq \ve_\alpha \sum_{j=1}^{2^\beta} \int\limits_\Om \left| \sqrt{\phi(u^{(k)}_j)}-\sqrt{\phi(u^{(k)}_{j-1})} \right|\,dx\nonumber\\
        & \leq \ve_\alpha |\Om|^{1/2} \sum_{j=1}^{2^\beta} \left(\int\limits_\Om \left| \sqrt{\phi(u^{(k)}_j)}-\sqrt{\phi(u^{(k)}_{j-1})} \right|^2\,dx\right)^{1/2}\nonumber\\
        & \leq \ve_\alpha \sqrt{2^\beta|\Om|}  \left(\sum_{j=1}^{2^\beta}\int\limits_\Om \left| \sqrt{\phi(u^{(k)}_j)}-\sqrt{\phi(u^{(k)}_{j-1})} \right|^2\,dx\right)^{1/2}\nonumber\\
        &\leq \ve_\alpha\sqrt{M|\Om_T|}.
    \end{align}

    Now, for $\alpha\in\NN$ and $k,k'\in\varkappa$ we have
    \begin{align*}
        \norm{\sqrt{\phi(u^{(k)})}-\sqrt{\phi(u^{(k')})}}_{L^1(\Om_T)}&\leq \norm{\sqrt{\phi(u^{(k)})}-\sqrt{\phi(w_{k,\alpha})}}_{L^1(\Om_T)}+\norm{\sqrt{\phi(w_{k,\alpha})}-\sqrt{\phi(w_{k',\alpha})}}_{L^1(\Om_T)}\\
        &\qquad+\norm{\sqrt{\phi(w_{k,\alpha})}-\sqrt{\phi(u^{(k')})}}_{L^1(\Om_T)}\\
        &\stackrel{\cref{continuitybound2}}{\leq} 2\ve_\alpha\sqrt{M|\Om_T|}+  \norm{\sqrt{\phi(w_{k,\alpha})}-\sqrt{\phi(w_{k',\alpha})}}_{L^1(\Om_T)}
    \end{align*}

    Now, applying \cref{l1convergence} to above, we get for any $\alpha\in\NN$
    \begin{align*}
       \limsup_{\varkappa\ni k,k'\to\infty} \norm{\sqrt{\phi(u^{(k)})}-\sqrt{\phi(u^{(k')})}}_{L^1(\Om_T)}{\leq} 2\ve_\alpha\sqrt{M|\Om_T|}.
    \end{align*} This estimate implies that the sequence $\left\{\sqrt{\phi(u^{(k)})}\right\}_{k\in\varkappa}$ is Cauchy in $L^1(\Om_T)$. Let $\Phi\in L^1(\Om_T)$ be its limit. We clain that $\Phi=\sqrt{\phi(u)}$. To this end, for $N\in\NN$, we define the truncation operator 
    \begin{align*}
        T_N(s):=\begin{cases}
            -N&\mbox{ if }s<-N,\\
            s&\mbox{ if }-N\leq s \leq N,\\
            N&\mbox{ if }s>N.
            \end{cases}
    \end{align*}
    From the strong convergence of $\left\{\sqrt{\phi(u^{(k)})}\right\}_{k\in\varkappa}$ in $L^1(\Om_T)$ and the weak convergence of $u^{(k)}$ in $L^p(\Om_T)$, we get for any $N\in\NN$
    \begin{align*}
        0\leq \lim_{\varkappa\ni k\to\infty} \iint\limits_{\Om_T} T_N\left(\sqrt{\phi(v)}-\sqrt{\phi(u^{(k)})}\right)\,\left(v-u^{(k)}\right)\,dx\,dt = \iint\limits_{\Om_T} T_N\left(\sqrt{\phi(v)}-\Phi\right)\,\left(v-u\right)\,dx\,dt
    \end{align*} for any $v\in L^p(\Om_T)$. Now replace $v$ by $u+\delta v$ for $\delta>0$, followed by division by $\delta$ and take limit as $\delta\to 0$ to get for any $N\in\NN$:
    \begin{align*}
        0\leq \lim_{\delta\downarrow 0}\iint\limits_{\Om_T} T_N\left(\sqrt{\phi(u+\delta v)}-\Phi\right)\,\left(v\right)\,dx\,dt = \iint\limits_{\Om_T} T_N\left(\sqrt{\phi(u)}-\Phi\right)\,\left(v\right)\,dx\,dt.
    \end{align*}
    Replacing $v$ by $-v$, we get equality above and since it holds for any $N\in\NN$, we conclude that $\Phi=\sqrt{\phi(u)}$. Then, passing to a further subsequence, we get pointwise a.e. convergence of $u^{(k)}$ to $u$.
\end{proof}

\section{Existence of variational solutions}

The proof is based on the method of minimizing movements in which we discretize time and consider minimizers of time-discretized functionals. This results in the construction of piecewise constant functions as approximating solutions. As the number of steps in the discretization goes to infinity, we recover variational solutions of the original problem. Much of the calculation is analogous to \cite[Section 4]{bogeleinDoublyNonlinearEquations2018}.

\subsection{Approximating functionals}

We fix a step size $h\in (0,1]$. We will inductively construct $u_i\in L^\phi(\Om)\cap W^{s,p}_{u_0}(\Om)$ of non-negative minimizers to certain elliptic variational functionals whenever $i\in\NN$ and $ih\leq T$. Suppose for some $i\in \NN$, $0\leq u_{i-1}\in L^\phi(\Om)\cap W^{s,p}_{u_0}(\Om)$ has been defined. We initialize with $u_{i=0}=u_0$ where $u_0$ is the time-independent initial data. We define the variational functional
\begin{align}\label{approxfunctional}
    F_i[v]:= \iint\limits_{\RR^N\times\RR^N} \frac{H(x,y,u(x,t)-u(y,t))}{|x-y|^N}\,dx\,dy+\frac{1}{h}\int\limits_{\Om} \mathfrak{b}[u_{i-1},v]\,dx,
\end{align} where $\mathfrak{b}[u_{i-1},v]=\phi(v)-\phi(u_{i-1})(v-u_{i-1})$. We choose $u_i$ as the minimizer of $F_i$ in the class of non-negative functions in $L^\phi(\Om)\cap W^{s,p}_{u_0}(\Om)$. This class is non-empty since $u_0$ is an admissible function. The proof of existence of $u_i$ is by the direct method of calculus of variations and relies on the convexity of $H$ as well as the coercivity of the functional.

\begin{proposition}
    \label{existenceapprox}
    Assume that $u_0$ satisfies the hypothesis \cref{datahypo} and $u_*\in L^\phi(\Om)$ with $u_*\geq 0$ a.e. in $\Om$. Then there exists $u\in L^\phi(\Om)\cap W^{s,p}_{u_0}(\Om)$ with $u\geq 0$ a.e. in $\RR^N$ minimizing the functional
    \begin{align*}
        F_i[w]:= \iint\limits_{\RR^N\times\RR^N} \frac{H(x,y,w(x,t)-w(y,t))}{|x-y|^N}\,dx\,dy+\frac{1}{h}\int\limits_{\Om} \mathfrak{b}[u_{*},w]\,dx,
    \end{align*}
    in the class $L^\phi(\Om)\cap W^{s,p}_{u_0}(\Om)$.
\end{proposition}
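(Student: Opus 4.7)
The plan is to apply the direct method of the calculus of variations in the guise of Proposition \ref{mintheorem}, working in the closed convex subset
\[
X := \bigl\{ w \in W^{s,p}_{u_0}(\Omega) \cap L^\phi(\Omega) : w \geq 0 \text{ a.e. on } \RR^N \bigr\}
\]
of $W^{s,p}(\RR^N)$. The set $X$ is non-empty since it contains $u_0$, and $F_i[u_0] < \infty$ by the hypothesis \cref{datahypo} on $u_0$ and by $u_* \in L^\phi(\Omega)$ (which makes $\int_\Omega \mathfrak{b}[u_*,u_0]\,dx$ finite via Fenchel and \cref{techlem2}). Convexity of $H$ in $\xi$ and of $\phi$ ensure that $F_i$ is convex, so by \cref{lowsemcont} it suffices to establish strong (equivalently, sequential a.e.) lower semicontinuity together with the coercivity needed for sublevel-set precompactness.

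For coercivity, let $(w_k) \subset X$ be a minimizing sequence. The bound $F_i[w_k] \leq C$ together with \cref{eq:bound_below_H} produces a uniform bound on $[w_k]_{W^{s,p}(\RR^N)}^p$. Since $w_k - u_0 \in W^{s,p}_0(\Omega)$, Poincar\'e's inequality (\cref{poincare}) converts this into a uniform bound on $\|w_k - u_0\|_{L^p(\Omega)}$, hence on $\|w_k\|_{W^{s,p}(\RR^N)}$. By reflexivity I extract $w_k \rightharpoonup u$ weakly in $W^{s,p}(\RR^N)$; the compact embedding \cref{compactembed} yields $w_k \to u$ strongly in $L^p(\Omega)$ and, after a further subsequence, a.e. on $\Omega$. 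Because $w_k = u_0$ on $\Omega^c$, this upgrades to a.e. convergence on all of $\RR^N$. The limit satisfies $u \geq 0$ a.e. and $u - u_0 \in W^{s,p}_0(\Omega)$, and \cref{techlem3} applied to $\int_\Omega \mathfrak{b}[u_*, w_k]\,dx \leq hF_i[w_k]$ gives a uniform bound on $\int_\Omega \phi(w_k)\,dx$, which via Fatou transfers to $u$ and places $u \in L^\phi(\Omega)$; hence $u \in X$.

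For the lower semicontinuity step I exploit the a.e. convergence and the Carath\'eodory character of $H$: the integrands converge pointwise a.e. and are non-negative, so Fatou's lemma gives
\begin{align*}
\iint_{\RR^N\times\RR^N}\!\! \frac{H(x,y,u(x)-u(y))}{|x-y|^N}\,dx\,dy &\leq \liminf_{k\to\infty} \iint_{\RR^N\times\RR^N}\!\! \frac{H(x,y,w_k(x)-w_k(y))}{|x-y|^N}\,dx\,dy,\\
\int_\Omega \mathfrak{b}[u_*,u]\,dx &\leq \liminf_{k\to\infty} \int_\Omega \mathfrak{b}[u_*,w_k]\,dx,
\end{align*}
the second following because $\mathfrak{b}[u_*,\cdot]$ is continuous and non-negative. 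Summing, $F_i[u] \leq \liminf_k F_i[w_k] = \inf_X F_i$, so $u$ is a minimizer. The main obstacle is really just bookkeeping across the mixed Orlicz/fractional-Sobolev setting—one must verify that the quadratic lower bound from $H$ controls the $W^{s,p}$-norm (via Poincar\'e on $W^{s,p}_0(\Omega)$) and that the Bregman divergence $\mathfrak{b}[u_*,w]$ controls the $L^\phi$-mass, so that both pieces of $X$ are properly recovered in the limit; the convexity of $H$ in $\xi$ is precisely what lets Fatou replace any Mazur-type convex-combination argument.
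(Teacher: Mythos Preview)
Your argument is correct and follows the same direct-method skeleton as the paper: take a minimizing sequence, use the coercivity bound \cref{coercivity2} together with \cref{techlem3} to get uniform bounds in $W^{s,p}_{u_0}(\Omega)$ and $L^\phi(\Omega)$, extract a weak limit, and verify lower semicontinuity. The one substantive difference is in the lower-semicontinuity step. The paper argues abstractly: $F_i$ is convex and (by Fatou) strongly lower semicontinuous, so by \cref{lowsemcont} it is weakly lower semicontinuous, and one passes to the limit directly along the weakly convergent subsequence in $L^\phi(\Omega)\cap W^{s,p}_{u_0}(\Omega)$. You instead invoke the compact embedding \cref{compactembed} to upgrade to a.e.\ convergence on $\RR^N$, and then apply Fatou pointwise to the non-negative integrands. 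Both routes are valid; yours is more hands-on and does not need the convexity of $H$ for the lsc step (your closing remark that ``convexity is precisely what lets Fatou replace Mazur'' is slightly off---Fatou needs only non-negativity and a.e.\ convergence here), while the paper's route is cleaner and avoids the compactness detour.
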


\begin{proof}
    Let $(u_j)_{j\in\NN}$ be a minimizing sequence in $\mathfrak{X}:=\{v\in L^\phi(\Om)\cap W^{s,p}_{u_0}(\Om):v\geq 0\mbox{ a.e. in }\Om\}$ such that $\lim\limits_{j\to\infty} F[u_j]=\inf_{w\in\mathfrak{X}} F[w]$. By \cref{techlem3}, we receive the following upper bound
    \begin{align*}
        \phi(u_j)&\leq 2\mathfrak{b}[u_*,u_j]+2^{2+1/l}\phi^{*}(b(u_*))\\
        & \leq 2\mathfrak{b}[u_*,u_j]+2^{2+1/l}m\phi(u_*).
    \end{align*}
    Using the lower bound on the functional \cref{coercivity2}, we get 
    \begin{align*}
        \frac{1}{2h}\int_\Om \phi(u_j)+\frac{1}{C_1}||u_j||^p_{W^{s,p}(\RR^N)}&\leq \iint\limits_{\RR^N\times\RR^N} \frac{H(x,y,u_j(x,t)-u_j(y,t))}{|x-y|^N}\,dx\,dy\\ 
        &\qquad+ \frac{C_2}{C_1} ||u_0||_{W^{s,p}(\RR^N)}+\frac{1}{h}\int_\Om \mathfrak{b}[u_*,u_j]+2^{1+1/l}m\phi(u_*)\,dx\\
        &<\infty 
    \end{align*} independent of $j$. Therefore, the sequence $(u_j)_{j\in\NN}$ is bounded in $L^\phi(\Om)\cap W^{s,p}_{u_0}(\Om)$. Hence, for a subsequence, there exists $u\in L^\phi(\Om)\cap W^{s,p}_{u_0}(\Om)$ with $u\geq 0$ a.e. in $\Om$ such that
    \begin{align*}
        u_j\rightharpoonup u\mbox{ in }W^{s,p}_{u_0}(\Om)-\mbox{weak},\\
        u_j\rightharpoonup u\mbox{ in }L^\phi(\Om)-\mbox{weak},
    \end{align*} since weak convergence preserves non-negativity. Recall that $L^\phi(\Om)\cap W^{s,p}_{u_0}(\Om)$ is reflexive due to doubling property of $\phi$ and since $p>1$.

    Due to Fatou's lemma, $F$ is lower semicontinuous with respect to strong convergence in $L^\phi(\Om)\cap W^{s,p}_{u_0}(\Om)$. Since $F$ is convex, it is also lower semicontinuous with respect to weak convergence (see \cref{lowsemcont}).

    As a consequence, we have
    \begin{align*}
        F[u]\leq \liminf\limits_{j\to\infty} F[u_j]=\inf_{w\in\mathfrak{X}}F[w].
    \end{align*} Hence $u$ is a minimizer.
\end{proof}

\subsection{Energy bounds}

The non-negative function $u_{i-1}\in L^\phi(\Om)\cap W^{s,p}_{u_0}(\Om)$ is an admissible function for the functional $F_i$ in \cref{approxfunctional} for any $i\in\NN$. Therefore, by minimality of $u_i$, we get 
\begin{align*}
    \iint\limits_{\RR^N\times\RR^N} \frac{H(x,y,u_i(x,t)-u_i(y,t))}{|x-y|^N}\,dx\,dy&+\frac{1}{h}\int\limits_{\Om} \mathfrak{b}[u_{i-1},u_i]\,dx=F[u_i]\\
    & \leq F[u_{i-1}]=\iint\limits_{\RR^N\times\RR^N} \frac{H(x,y,u_{i-1}(x,t)-u_{i-1}(y,t))}{|x-y|^N}\,dx\,dy.
\end{align*}

By iterating the previous inequality, we obtain 

\begin{align}\label{energyest1}
    \iint\limits_{\RR^N\times\RR^N} \frac{H(x,y,u_k(x,t)-u_k(y,t))}{|x-y|^N}\,dx\,dy&+\sum_{i=1}^k\frac{1}{h}\int\limits_{\Om} \mathfrak{b}[u_{i-1},u_i]\,dx\\
    & \leq \iint\limits_{\RR^N\times\RR^N} \frac{H(x,y,u_0(x,t)-u_0(y,t))}{|x-y|^N}\,dx\,dy,
\end{align} whenever $h>0$, $k\in\NN$ with $kh\leq T$.

By \cref{techlem4} and \cref{energyest1} and the non-negativity of $H$, we obtain 

\begin{align}\label{energyest2}
    \sum_{i=1}^k \int\limits_{\Om} \left| \sqrt{\phi(u_i)}-\sqrt{\phi(u_{i-1})} \right|^2\,dx \leq C \sum_{i=1}^k\int\limits_{\Om} \mathfrak{b}[u_{i-1},u_i]\,dx\leq ChM,
\end{align} where \[M=\iint\limits_{\RR^N\times\RR^N} \frac{H(x,y,u_0(x,t)-u_0(y,t))}{|x-y|^N}\,dx\,dy.\]

As a result,

\begin{align}
    \label{energyest3}
    \int\limits_{\Om} \phi(u_k)\,dx &\leq 2\int\limits_{\Om} \left|\sqrt{\phi(u_k)}-\sqrt{\phi(u_0)}\right|^2\,dx + 2\int\limits_{\Om} \phi(u_0)\,dx\nonumber \\
    & \leq 2k \sum_{i=1}^k\int\limits_{\Om} \left|\sqrt{\phi(u_i)}-\sqrt{\phi(u_{i-1})}\right|^2\,dx + 2\int\limits_{\Om} \phi(u_0)\,dx \nonumber\\
    & \leq CMT + 2\int\limits_{\Om} \phi(u_0)\,dx,
\end{align}

and also 

\begin{align}
    \label{energyest4}
    \iint\limits_{\RR^N\times\RR^N} \frac{|u_k(x)-u_k(y)|^p}{|x-y|^{N+ps}}\,dx\,dy & \leq C_1 \iint\limits_{\RR^N\times\RR^N} \frac{H(x,y,u_k(x)-u_k(y))}{|x-y|^N}\,dx\,dy+C_2 ||u_0||^p_{W^{s,p}(\RR^N)}\nonumber\\
    &\leq C_1 \iint\limits_{\RR^N\times\RR^N} \frac{H(x,y,u_0(x)-u_0(y))}{|x-y|^N}\,dx\,dy+C_2 ||u_0||^p_{W^{s,p}(\RR^N)}\nonumber\\ 
    &\leq C_1M +C_2 ||u_0||^p_{W^{s,p}(\RR^N)}.
\end{align}

\subsection{Convergence to a limit function}

In the sequel, we will consider such $h=h(k)\in (0,1]$ such that $h_k=\frac{T}{k}$ with $k\in\NN$. We define the map 
\begin{align*}
    u^{(k)}(\cdot,t):=u_i\mbox{ for }t\in ((i-1)h_k,ih_k)\mbox{ with }i\in\{0,1,\ldots,k\}
\end{align*} where $u_i=u_0$ for $i=0$.

From \cref{energyest3} and \cref{energyest4}, we get 

\begin{align}\label{energyest5}
    \sup\limits_{t\in [0,T]} \int\limits_\Om \phi(u^{(k)}(t))\,dx &+ \sup_{t\in [0,T]} \iint\limits_{\RR^N\times\RR^N} \frac{H(x,y,u^{(k)}(x)-u^{(k)}(y))}{|x-y|^N}\,dx\,dy\\
    &\leq C_1 M + 2\int_\Om\phi(u_0)\,dx + C_2 ||u_0||_{W^{s,p}(\RR^N)}^p.
\end{align}

The estimate in \cref{energyest5} guarantees boundedness in the space $L^\infty(0,T;W^{s,p}_{u_0}(\Om))$. Therefore, there exists a subsequence of $(u^{(k)})$ and a limit function $u\in L^\infty(0,T;W^{s,p}_{u_0}(\Om))$ such that as $k\to\infty$ for the not-relabelled sequence, we have 
\begin{align}\label{weakcon1}
    u^{(k)}\rightharpoonup u \mbox{ in } L^\infty(0,T;W^{s,p}_{u_0}(\Om))-\mbox{weak}-*.
\end{align}

By \cref{energyest5}, \cref{energyest2} and \cref{weakcon1}, the hypothesis of \cref{compactnesslemma} are satisfied. As a result, for a further not-relabelled subsequence, we have 

\begin{align}\label{convergencept}
    &\sqrt{\phi(u^{(k)})}\to \sqrt{\phi(u)} \mbox{ in }L^1(\Om_T)-\mbox{strong}\\
    &u^{(k)}\to u\mbox{ a.e in }\RR^N\times(0,T).
\end{align}

Now, observe that by \cref{energyest2}

\begin{align*}
    \int_0^T\int\limits_{\Om} |\Delta_{h_k} \sqrt{\phi(u^{(k)})}|^2\,dx\,dt &= \int_0^T\int\limits_{\Om} \left|\frac{1}{h_k}\left(\sqrt{\phi(u^{(k)}(t+h_k))}-\sqrt{\phi(u^{(k)}(t))}\right)\right|^2\,dx\,dt\\
    & = \int\limits_\Om h_k \sum_{i=1}^k \left| \frac{1}{h_k}\left( \sqrt{\phi(u_i)}-\sqrt{\phi(u_{i-1})} \right) \right|^2\,dx\\
    & =  \frac{1}{h_k}\int\limits_\Om\sum_{i=1}^k \left| \sqrt{\phi(u_i)}-\sqrt{\phi(u_{i-1})} \right|^2\,dx\leq C.
\end{align*}

Therefore, for a further, not-relabelled subsequence, there exists $w\in L^2(\Om_T)$ such that 

\begin{align}\label{weakcon2}
    \Delta_{h_k} \sqrt{\phi(u^{(k)})}\rightharpoonup w\mbox{ in }L^2(\Om_T)-\mbox{weak}.
\end{align}

Now we calculate for any $\varphi\in C_0^\infty(\Om_T)$

\begin{align*}
    \iint\limits_{\Om_T} \sqrt{\phi(u)} \de_t\varphi\,dx\,dt &= \lim\limits_{k\to\infty} \iint\limits_{\Om_T} \sqrt{\phi(u^{(k)})} \Delta_{-h_k}\varphi\,dx\,dt \\
    & = -\lim\limits_{k\to\infty} \iint\limits_{\Om_T} \Delta_{h_k} \sqrt{\phi(u^{(k)})} \, \varphi\,dx\,dt\\
    & = - \iint\limits_{\Om_T} w \, \varphi\,dx\,dt,
\end{align*} where in the first equality we used pointwise a.e. convergence of $\sqrt{\phi(u^{(k)})}$ and strong convergence of $\Delta_{-h_k}\varphi$, in the second inequality, we used discrete integration by parts and in the last equality, we used \cref{weakcon2}.

The above calculation implies that $w=\de_t\sqrt{\phi(u)}$. Therefore, $\de_t\sqrt{\phi(u)}\in L^2(\Om_T)$. As a result, $\de_t{\phi(u)}=2\sqrt{\phi(u)}\de_t\sqrt{\phi(u)}\in L^1(\Om_T)$. This implies that $\phi(u)\in C^0([0,T];L^1(\Om))$, which, by \cref{convergence2} implies that $u\in C^0([0,T];L^\phi(\Om))$.

\subsection{A variational inequality for approximate minimizers}

Let us observe that the map $u^{(k)}$ minimizes the integral functional defined by 
\begin{align}
    \label{approxfunctional2}
    F^{(k)}[v]:=\int_{0}^T\iint\limits_{\RR^N\times\RR^N} \frac{H(x,y,v(x,t)-v(y,t))}{|x-y|^N}\,dx\,dy\,dt + \frac{1}{h_k} \int_0^T\int\limits_{\Om} \mathfrak{b}[u^{(k)}(t-h_k),v(t)]\,dx\,dt
\end{align} in the class of non-negative functions $v\in L^\phi(\Om_T)\cap L^p(0,T;W^{s,p}_{u_0}(\RR^N))$.

\paragraph{Justification} We have the following chain of inequalities 
\begin{align*}
    F^{(k)}[u^{(k)}]&:=\int_{0}^T\iint\limits_{\RR^N\times\RR^N} \frac{H(x,y,u^{(k)}(x,t)-u^{(k)}(y,t))}{|x-y|^N}\,dx\,dy\,dt + \frac{1}{h_k} \int_0^T\int\limits_{\Om} \mathfrak{b}[u^{(k)}(t-h_k),u^{(k)}(t)]\,dx\,dt\\
    &=\sum_{i=1}^k\iiint\limits_{(\RR^N\times\RR^N)\times((i-1)h_k,ih_k)}\hspace{-1cm} \frac{H(x,y,u_i(x)-u_{i-1}(y))}{|x-y|^N}\,dx\,dy\,dt + \sum_{i=1}^k\frac{1}{h_k} \iint\limits_{\Om\times((i-1)h_k,ih_k)} \mathfrak{b}[u_{i-1},u_i]\,dx\,dt\\
    &=\sum_{i=1}^k \int_{(i-1)h_k}^{ih_k} F_i[u_i]\,dt \leq \sum_{i=1}^k\int_{(i-1)h_k}^{ih_k} F_i[v]\,dt = F^{(k)}[v],
\end{align*}
 where the minimizing property of $u_i$ on each time interval is used (\cref{existenceapprox}).

Now, for a fixed comparision function $v\in L^\phi(\Om_T)\cap L^p(0,T;W^{s,p}_{u_0}(\Om))$, observe that for any $s\in (0,1)$, the convex combination $w_s:=u^{(k)}+s(v-u^{(k)})$ is also admissible. Therefore, by minimality of $u^{(k)}$, we have
\begin{align*}
    F^{(k)}[u^{(k)}]\leq F^{(k)}[w_s]&\leq \int_0^T \iint\limits_{\RR^N\times\RR^N} (1-s) \frac{H(x,y,u^{(k)}(x)-u^{(k)}(y))}{|x-y|^N}\,dx\,dy \\
    & \qquad + \int_0^T \iint\limits_{\RR^N\times\RR^N} s \frac{H(x,y,v(x,t)-v(y,t))}{|x-y|^N}\,dx\,dy\\
    &\qquad\qquad+\frac{1}{h_k}\int_0^T\int\limits_{\Om} \mathfrak{b}[u^{(k)}(t-h_k),w_s]\,dx\,dt. 
\end{align*}

This implies
\begin{align*}
    \int_0^T \iint\limits_{\RR^N\times\RR^N} &\frac{H(x,y,u^{(k)}(x)-u^{(k)}(y))}{|x-y|^N}\,dx\,dy \leq \int_0^T \iint\limits_{\RR^N\times\RR^N} \frac{H(x,y,v(x,t)-v(y,t))}{|x-y|^N}\,dx\,dy\\
    & +\frac{1}{sh_k}\int_0^T\int\limits_{\Om} \mathfrak{b}[u^{(k)}(t-h_k),w_s]\,dx\,dt-\frac{1}{sh_k}\int_0^T\int\limits_{\Om} \mathfrak{b}[u^{(k)}(t-h_k),u^{(k)}]\,dx\,dt\\
    &= \int_0^T \iint\limits_{\RR^N\times\RR^N} \frac{H(x,y,v(x,t)-v(y,t))}{|x-y|^N}\,dx\,dy +\frac{1}{h_k}\int_0^T\int\limits_{\Om} \frac{1}{s} \left(\phi(w_s)-\phi(u^{(k)})\right)\\
    &\qquad\qquad-\frac{1}{h_k}\int_0^T\int\limits_{\Om} b(u^{(k)}(t-h_k))(v-u^{(k)})\,dx\,dt.
\end{align*}

The function $s\mapsto \frac{1}{s}\left(\phi(w_s)-\phi(u^{(k)})\right)$ is monotone and converges a.e. to $b(u^{(k)})(v-u^{(k)})$. Hence, passing to the limit as $s\to 0$ by dominated convergence theorem, we obtain 
\begin{align}\label{varineqapprox}
    \int_0^T \iint\limits_{\RR^N\times\RR^N} \frac{H(x,y,u^{(k)}(x)-u^{(k)}(y))}{|x-y|^N}&\,dx\,dy \leq  \int_0^T \iint\limits_{\RR^N\times\RR^N} \frac{H(x,y,v(x)-v(y))}{|x-y|^N}\,dx\,dy\nonumber\\
    &\qquad\qquad+\int_0^T\int\limits_{\Om} \Delta_{-h_k}b(u^{(k)})(v-u^{(k)})\,dx\,dt.
\end{align}

The preceding inequality \cref{varineqapprox} can be localized to the time interval $(0,\tau)$ by choosing the comparision function $v=\chi_{(0,\tau)}v+\chi_{(\tau,T)}u^{(k)}$. This gives us 
\begin{align}\label{varineqapprox2}
    \int_0^T \iint\limits_{\RR^N\times\RR^N} \frac{H(x,y,u^{(k)}(x)-u^{(k)}(y))}{|x-y|^N}&\,dx\,dy \leq  \int_0^T \iint\limits_{\RR^N\times\RR^N} \frac{H(x,y,v(x,t)-v(y,t))}{|x-y|^N}\,dx\,dy\nonumber\\
    &\qquad\qquad+\int_0^T\int\limits_{\Om} \Delta_{-h_k}b(u^{(k)})(v-u^{(k)})\,dx\,dt.
\end{align}

\subsection{Passage to limit in variational inequality}

To the inequality \cref{varineqapprox2}, we apply the discrete integration by parts lemma as in \cref{discreteintbyparts} to get 
\begin{align}\label{varineq2}
    \int_0^\tau \iint\limits_{\RR^N\times\RR^N} &\frac{H(x,y,u^{(k)}(x)-u^{(k)}(y))}{|x-y|^N}\,dx\,dy \leq  \int_0^\tau \iint\limits_{\RR^N\times\RR^N} \frac{H(x,y,v(x,t)-v(y,t))}{|x-y|^N}\,dx\,dy\nonumber\\
    &+\underbrace{\int_0^\tau\int\limits_{\Om} \Delta_{h_k}v\,(b(v)-b(u^{(k)}))\,dx\,dt}_{A}-B_\tau(h_k)+B_0(h_k)+\delta_1(h_k)+\delta_2(h_k)
\end{align} for every $\tau\in (0,T]$. 

The latter four terms are
\begin{align*}
    B_\tau(h_k)&:=\frac{1}{h_k}\iint\limits_{\Om\times (\tau-h_k,\tau)} \mathfrak{b}[u^{(k)}(t),v(t+h_k)]\,dx\,dt,\\
    B_0(h_k)&:=\frac{1}{h_k}\iint\limits_{\Om\times (-h_k,0)}\mathfrak{b}[u^{(k)},v]\,dx\,dt,\\
    \delta_1(h_k)&:=\frac{1}{h_k} \iint\limits_{\Om_\tau} \mathfrak{b}[v(t),v(t+h_k)]\,dx\,dt,\\
    \delta_2(h_k)&:=\iint\limits_{\Om\times(-h_k,0)}\Delta_{h_k}v\,\left(b(v(t+h_k))-b(u^{(k)})\right)\,dx\,dt.
\end{align*}

By \cref{discreteintbyparts}, if $\de_t v\in L^\phi(\Om_T)$, then 
\begin{align}\label{limbound2}
    &\lim_{k\to\infty} \delta_1(h_k)=0,\mbox{ and }\\
    &\lim_{k\to\infty} \delta_2(h_k)=\lim_{k\to\infty}\iint\limits_{\Om\times(-h_k,0)}\Delta_{h_k}v\,\left(b(v(t+h_k))-b(u_0)\right)\,dx\,dt=0.
\end{align}

The comparision map $v$ in \cref{varineq2} is chosen to satisfy $v\in L^p(0,T;W^{s,p}_{u_0}(\Om))$ with $\de_t v\in L^\phi(\Om_T)$ and $v(0)\in L^\phi(\Om)$. Moreover, we extend $v$ to negative times $t<0$ by $v(t)=v(0)$.

Let us look at the term $A$. Notice that $\Delta_{h_k}v\to \de_t v$ in $L^\phi(\Om_T)$-strong. This can be proved in two steps. The mollification of $v$ is strongly convergent by \cite[Theorem 8.21]{adamsSobolevSpaces2003} and the difference quotients converge strongly to derivative for smooth functions.

On the other hand, $b(u^{(k)})$ is bounded uniformly in $L^{\phi^*}(\Om_T)$ by \cref{techlem2} and \cref{energyest5}. The space $L^{\phi^*}(\Om_T)$ is reflexive due to the doubling property of $\phi^*$ (\cite[Theorem 8.20]{adamsSobolevSpaces2003}). Therefore, for a subsequence $b(u^{(k)})$ converges weakly to some limit in $L^{\phi^*}(\Om_T)$. However, due to \cref{convergencept}, the limit can be readily identified as $b(u)$ by continuity of $b$.

As a result, we can pass to the limit:
\begin{align}\label{covergence5}
    \lim_{k\to\infty} \iint\limits_{\Om_{\tau}} \Delta_{h_k}v\, (b(v)-b(u^{(k)}))\,dx\,dt=\iint\limits_{\Om_{\tau}} \de_t v\, (b(v)-b(u))\,dx\,dt.
\end{align}

Now, let us look at $B_0(h_k)$. Since $u^{(k)}$ and $v$ are constant over the interval $(-h_k,0)$, we have
\begin{align}\label{B0}
    B_0(h_k)=\int_\Om \mathfrak{b}[u_0,v(0)]\,dx.
\end{align}

Except for the expression $B_\tau(h_k)$, it is possible to pass to the limit as $k\to\infty$ in all the expressions in \cref{varineq2}. To handle $B_{\tau}(h_k)$, we integrate over $\tau\in(t_0,t_0+\delta)$ for $t_0\in [0,T-\delta]$ and $\delta\in (0,T)$ and divide by $\delta$. As a consequence, \cref{varineq2} becomes 

\begin{align}\label{varineq3}
    \int_0^{t_0} \iint\limits_{\RR^N\times\RR^N} &\frac{H(x,y,u^{(k)}(x)-u^{(k)}(y))}{|x-y|^N}\,dx\,dy \leq  \int_0^{t_0+\delta} \iint\limits_{\RR^N\times\RR^N} \frac{H(x,y,v(x)-v(y))}{|x-y|^N}\,dx\,dy\nonumber\\
    &+\fint_{t_0}^{t_0+\delta}\iint\limits_{\Om_{\tau}} \Delta_{h_k}v\,(b(v)-b(u^{(k)}))\,dx\,dt-\frac{1}{\delta}\iint\limits_{\Om\times (t_0,t_0+\delta-h_k)} \mathfrak{b}[u^{(k)}(t),v(t+h_k)]\,dx\,dt\nonumber\\
    &+\int_\Om \mathfrak{b}[u_0,v(0)]\,dx+\delta_1(h_k)+\delta_2(h_k)
\end{align}

Now, $u^{(k)}\to u$ pointwise a.e. by \cref{convergencept} and since $v\in C^0([0,T];L^1(\Om))$, we have $v(t+h_k)\to v(t)$ a.e. in $\Om_T$ as $k\to\infty$. By non-negativity of $\mathfrak{b}$, we can apply Fatou's lemma to conclude 

\begin{align}\label{limbound1}
    \frac{1}{\delta} \int_{t_0}^{t_0+\delta} \int\limits_{\Om} \mathfrak{b}[u(t),v(t)]\,dx\,dt \leq \liminf_{k\to\infty} \frac{1}{\delta}\iint\limits_{\Om\times (t_0,t_0+\delta-h_k)} \mathfrak{b}[u^{(k)}(t),v(t+h_k)]\,dx\,dt.
\end{align}

Using weak lower semicontinuity of the integral $\int_0^T\iint_{\RR^N\times\RR^N} \frac{H(x,y,u(x,t)-u(y,t))}{|x-y|^N}$ or by Fatou's lemma and the convergences \cref{convergencept}, \cref{limbound1}, \cref{limbound2}, \cref{covergence5}, we get 

\begin{align}\label{varineq4}
    \int_0^{t_0} \iint\limits_{\RR^N\times\RR^N} &\frac{H(x,y,u(x,t)-u(y,t))}{|x-y|^N}\,dx\,dy \leq  \int_0^{t_0+\delta} \iint\limits_{\RR^N\times\RR^N} \frac{H(x,y,v(x,t)-v(y,t))}{|x-y|^N}\,dx\,dy\nonumber\\
    &+\fint_{t_0}^{t_0+\delta}\iint\limits_{\Om_{\tau}} \de_t v\,(b(v)-b(u))\,dx\,dt- \frac{1}{\delta} \int_{t_0}^{t_0+\delta} \int\limits_{\Om} \mathfrak{b}[u(t),v(t)]\,dx\,dt \nonumber\\
    &+\int_\Om \mathfrak{b}[u_0,v(0)]\,dx.
\end{align}

It remains to pass to the limit in $\delta\to 0$. The first and the second term on the right hand side depends continuously on $\delta$ due to absolute continuity on integration in time. For the third term, we have
\begin{align}\label{bound3}
    0\leq \mathfrak{b}[u(t),v(t)]\stackrel{\cref{boundary1}}{\leq} \phi(v(t))+\phi^*(b(u(t)))&\stackrel{\cref{techlem2}}{\leq} \phi(v(t))+\frac{m}{m+1}u(t)b(u(t))\nonumber\\
    &\stackrel{\cref{techlem2}}{\leq} \phi(v(t))+m\phi(u(t)).
\end{align}

Since $u,v\in C^0([0,T];L^\phi(\Om))$, the right hand side of \cref{bound3} is continuous in time. Therefore, by an application of dominated convergence theorem (\cite[Theorem 1.20]{evansMeasureTheoryFine2015}), the function \[[0,T]\ni t\mapsto \int\limits_\Om \mathfrak{b}[u(t),v(t)]\,dx\] is continuous. 

Hence, now, we can pass to the limit as $\delta\to 0$ in \cref{varineq4} to get 
\begin{align*}
    \int_0^{t_0} \iint\limits_{\RR^N\times\RR^N} &\frac{H(x,y,u(x,t)-u(y,t))}{|x-y|^N}\,dx\,dy \leq  \int_0^{t_0} \iint\limits_{\RR^N\times\RR^N} \frac{H(x,y,v(x,t)-v(y,t))}{|x-y|^N}\,dx\,dy\nonumber\\
    &+\iint\limits_{\Om_{\tau}} \de_t v\,(b(v)-b(u))\,dx\,dt- \int\limits_{\Om} \mathfrak{b}[u(t_0),v(t_0)]\,dx\,dt +\int_\Om \mathfrak{b}[u_0,v(0)]\,dx,
\end{align*}
for all $v\in L^p(0,T;W^{s,p}_{u_0}(\Om))$ with $\de_t v\in L^\phi(\Om_T)$ and $v(0)\in L^\phi(\Om)$. This completes the proof of \cref{mainthm:e}.

\section{Variational inequality for weak solutions}

In this section, we compare the notion of weak solutions to that of variational solutions. We will first show that if the function $\xi\mapsto H(x,y,\xi)$ is $C^1$ and satisfies a comparable growth condition from above namely
\begin{align}
    \label{eq:bound_above_H} H(x,y,\xi) \leq C\left(\frac{|\xi|}{|x-y|^s}\right)^p\mbox{ and } |D_\xi H(x,y,\xi)| \leq C\frac{|\xi|^{p-1}}{|x-y|^{sp}}
\end{align}
then a variational solution is a weak solution in the sense given in the following theorem, which is analogous to \cite[Theorem 6.1]{bogeleinDoublyNonlinearEquations2018}.

\begin{theorem}
    Let $\Om$ be an open bounded subset of $\RR^N$. Suppose that $H$ satisfies the assumptions~\cref{eq:bound_below_H} ~\cref{eq: cvx_H} and ~\cref{eq:bound_above_H} and let the time-independent Cauchy-Dirichlet data $u_0:\RR^N\to[0,\infty)$ satisfy~\cref{datahypo}. Then the solution  \[u\in L^p(0,T;W^{s,p}(\mathbb{R}^N))\cap C^0(0,T;L^\phi(\Om)), \mbox{ such that } u-g\in L^p(0,T;W^{s,p}_0(\Om))\] as in \cref{mainthm:e} satisfies the following variational inequality for all $\tau\in [0,T]$:
     \begin{align}
         \label{defweak}
         \int_{0}^{\tau}\iint\limits_{\mathbb{R}^N\times\mathbb{R}^N} &\frac{D_{\xi}H(x,y,u(x,t)-u(y,t))((v-u)(x,t)-(v-u)(y,t))}{|x-y|^N}\,dx\,dy\,dt+\nonumber\\
         &\int_{0}^{\tau}\int_{\Om} \de_t v(b(v)-b(u))\,dx\,dt \geq \mathfrak{B}[u(\tau),v(\tau)]-\mathfrak{B}[u_0,v(0)],
        \end{align} for all $v:(0,T)\times\RR^N\to [0,\infty)$ such that $v\in L^p(0,T;W^{s,p}(\mathbb{R}^N))$ and $\de_t v\in L^\phi(0,T;\Om)$ such that $v-u_0\in L^p(0,T;W_0^{s,p}(\Om))$ and $v(0)\in L^\phi(\Om)$.
\end{theorem}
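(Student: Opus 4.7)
The strategy is to exploit the Euler--Lagrange inequality satisfied by the discrete minimizers constructed in the proof of \cref{mainthm:e}. Recall that $u$ is obtained there as the limit of piecewise-constant interpolants $u^{(k)}$, where $u_i = u^{(k)}(\cdot, t)$ for $t \in ((i-1)h_k, ih_k)$ minimizes the elliptic functional $F_i$ from \cref{approxfunctional} over the class of non-negative functions in $L^\phi(\Om) \cap W^{s,p}_{u_0}(\Om)$. Under the additional $C^1$-hypothesis on $H$ in $\xi$ made in \cref{eq:bound_above_H}, $F_i$ is Gateaux differentiable along admissible non-negative directions, so the minimizing property of $u_i$ yields
\[
\iint_{\RR^N \times \RR^N} \frac{D_\xi H(x,y,u_i(x) - u_i(y)) \cdot ((w - u_i)(x) - (w - u_i)(y))}{|x-y|^N}\,dx\,dy + \frac{1}{h_k}\int_\Om (b(u_i) - b(u_{i-1}))(w - u_i)\,dx \geq 0
\]
for every admissible non-negative $w$.

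Testing with $w = v(\cdot, t)$ for the admissible comparison function $v$ from the theorem, integrating over $t \in ((i-1)h_k, ih_k)$, and summing over $i = 1, \ldots, k$ produces
\[
\int_0^\tau \iint \frac{D_\xi H(u^{(k)}) \cdot ((v - u^{(k)})(x) - (v - u^{(k)})(y))}{|x-y|^N}\,dx\,dy\,dt + \int_0^\tau \int_\Om \Delta_{-h_k} b(u^{(k)})(v - u^{(k)})\,dx\,dt \geq 0.
\]
The discrete integration-by-parts formula \cref{discreteintbyparts} bounds the time-difference integral from above by
\[
\int_0^\tau \int_\Om \Delta_{h_k} v\,(b(v) - b(u^{(k)}))\,dx\,dt - \frac{1}{h_k}\iint_{\Om\times(\tau-h_k,\tau)}\mathfrak{b}[u^{(k)}, v(t+h_k)]\,dx\,dt + \frac{1}{h_k}\iint_{\Om\times(-h_k,0)}\mathfrak{b}[u^{(k)}, v]\,dx\,dt + \delta_1(h_k) + \delta_2(h_k),
\]
so after combination one arrives at an inequality in which, as $k \to \infty$, the first three addends converge respectively to $\int_0^\tau \int_\Om \de_t v\,(b(v) - b(u))$, to $-\mathfrak{B}[u(\tau), v(\tau)]$, and to $\mathfrak{B}[u_0, v(0)]$ (by the convergence arguments already carried out in Section~4 of the existence proof), while $\delta_1(h_k) + \delta_2(h_k) \to 0$. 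After rearrangement the resulting limit inequality is precisely the target.

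The main technical obstacle is the passage to the limit in the nonlocal integrand $D_\xi H(u^{(k)}) \cdot ((v - u^{(k)})(x) - (v - u^{(k)})(y))/|x-y|^N$, since $u^{(k)}$ converges to $u$ only weakly in $L^p(0,T;W^{s,p}(\RR^N))$ together with pointwise a.e.~convergence from \cref{compactnesslemma}. The $C^1$-regularity of $H$ gives $D_\xi H(u^{(k)}) \to D_\xi H(u)$ almost everywhere, and the growth bound \cref{eq:bound_above_H} combined with H\"older's inequality in the Gagliardo measure yields
\[
\left|\int_E \frac{D_\xi H(u^{(k)}) \cdot (\eta(x)-\eta(y))}{|x-y|^N}\,dx\,dy\,dt\right| \leq C\,[u^{(k)}]_{W^{s,p}(\RR^N)}^{p-1}\left(\int_E \frac{|\eta(x)-\eta(y)|^p}{|x-y|^{N+sp}}\,dx\,dy\,dt\right)^{1/p}
\]
for any measurable $E$. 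Applied with $\eta = v$ this supplies equi-integrability uniform in $k$ by absolute continuity of the Gagliardo integral of $v$, so Vitali's theorem handles that component; the remaining energy-type component in which $\eta = u^{(k)}$ is controlled via the monotonicity of $\xi \mapsto D_\xi H(x,y,\xi)$, by using the Minty-type inequality $(D_\xi H(u_i) - D_\xi H(v))\cdot((u_i - v)(x) - (u_i - v)(y)) \geq 0$ at the discrete level to replace $D_\xi H(u_i)$ by $D_\xi H(v)$ in the Euler--Lagrange inequality before the limit is taken, after which a density argument in the class of admissible comparison functions restores the target form with $D_\xi H(u)$. The overall scheme is the fractional analogue of Theorem~6.1 of \cite{bogeleinDoublyNonlinearEquations2018}.
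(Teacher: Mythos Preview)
Your approach is genuinely different from the paper's, and it has a real gap at the step you label ``density argument.''

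The paper does \emph{not} return to the discrete minimizers at all. It works directly with the variational inequality \cref{defvar} already satisfied by $u$ and tests it with the comparison maps $w_h=[u]_h+s(v-[u]_h)$, where $[u]_h$ is the exponential time mollification with initial datum $u_0$. The point of this choice is twofold: $w_h$ is admissible (it has $\partial_t w_h\in L^\phi(\Om_T)$ by construction), and the time integral $\iint \partial_t w_h\,(b(w_h)-b(u))$ can be decomposed so that the dangerous part $(1-s)\iint\partial_t[u]_h\,(b([u]_h)-b(u))$ is non-positive by the monotonicity of $b$ combined with the identity $\partial_t[u]_h=\tfrac1h(u-[u]_h)$. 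The $p$-growth \cref{eq:bound_above_H} is then used, via dominated convergence, to pass to the limit $h\downarrow 0$ in the nonlocal term; after dividing by $s$ and letting $s\downarrow 0$ (again by dominated convergence, now using the bound on $D_\xi H$), one lands exactly on \cref{defweak}. No Minty trick is needed.

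Your route runs into the following obstacle. After the Minty step at the discrete level you obtain, in the limit $k\to\infty$, the inequality
\[
\int_0^\tau\!\!\iint\frac{D_\xi H(x,y,v(x,t)-v(y,t))\,((v-u)(x,t)-(v-u)(y,t))}{|x-y|^N}\,dx\,dy\,dt+\int_0^\tau\!\!\int_\Om\partial_t v\,(b(v)-b(u))\,dx\,dt\geq \mathfrak{B}[u(\tau),v(\tau)]-\mathfrak{B}[u_0,v(0)],
\]
which carries $D_\xi H(v)$, not $D_\xi H(u)$. To convert this into the target inequality via Minty linearization you would substitute $v=u+sw$ and send $s\downarrow 0$; but such a $v$ is \emph{not} admissible, since $u$ is only known to satisfy $\partial_t\sqrt{\phi(u)}\in L^2(\Om_T)$ and not $\partial_t u\in L^\phi(\Om_T)$. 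If instead you try $v=[u]_h+sw$, you are forced to control $\iint\partial_t[u]_h\,(b([u]_h+sw)-b(u))$ in the limit $h\downarrow 0$, and the only way to do that is precisely the decomposition-plus-monotonicity trick that forms the core of the paper's argument. In other words, the ``density argument'' you invoke is not a bookkeeping step: it is the whole proof, and it requires the time-mollification machinery you bypassed. Without it your scheme does not close.
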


\begin{proof}
    For a comparison function $v$ as stated in the theorem, let us test inequality \eqref{defvar} with the comparison maps
    $$w_h = [u]_h + s(v - [u]_h),$$ where $s \in (0,1), \; h> 0$ and $[u]_h$ denotes the time mollification with initial values $v_0 = u_0.$  This implies that 
    \begin{align}\label{27DEC}
        \mathfrak{B}[u(\tau), w_{h}(\tau)] + \int_{0}^{\tau}\iint\limits_{\mathbb{R}^N\times\mathbb{R}^N} &\frac{H(x,y,u(x,t)-u(y,t)) - H(x,y,w_{h}(x,t)-w_{h}(y,t))}{|x-y|^N}\,dx\,dy\,dt\nonumber\\ &\leq  \mathfrak{B}[u_0, w_{h}(0)] + \int_{0}^{\tau}\int_{\Om} \de_t w_h(b(w_h)-b(u))\,dx\,dt
    \end{align}
    Let us re-write the integral involving the time derivative in the following form:
    \begin{align*}
        \int_{0}^{\tau}\int_{\Om} \de_t w_h(b(w_h)-b(u))\,dx\,dt &=  \int_{0}^{\tau}\int_{\Om}\big[\de_t w_h b(w_h)- (1 - s)\de_t [u]_h b(u) - s\de_t v b(u) \big] \,dx\,dt\\
        &=  \int_{0}^{\tau}\int_{\Om}\big[\de_t(\phi(w_h))- (1 - s)\de_t \phi([u]_h)   - s \de_t v b(u) \big] \,dx\,dt \\ &+ (1 - s)  \int_{0}^{\tau}\int_{\Om} \de_t [u]_h \big(b([u]_h)-b(u)\big)\,dx\,dt.
        \end{align*}
        By the monotonicity of $b$ and the identity in \cref{generalmolliX}, the last integral is non-positive. Moreover, let us use  the identity
        \begin{align*}
            s \int_{0}^{\tau}\int_{\Om} \de_t v(b(v))\,dx\,dt &= s \int_{0}^{\tau}\int_{\Om} \de_t (\phi(v))\,dx\,dt \\
            & =  s \int_{\Om \times \tau} \phi(v)\,dx - s \int_{\Om \times 0} \phi(v)\,dx
        \end{align*}
        and corresponding identities for $w_h$ and $[u]_h,$ to get the following estimates:
        \begin{align*}
        \int_{0}^{\tau}\int_{\Om} \de_t w_h(b(w_h)-b(u))\,dx\,dt & \leq s \int_{0}^{\tau}\int_{\Om} \de_t v(b(v)-b(u))\,dx\,dt \\
          & + \int_{\Om \times \{\tau\}} \Big[\phi(w_h) - (1 - s) \phi([u]_h) - s\phi(v)\Big] \,dx\\
          & - \int_{\Om \times \{0\}} \Big[\phi(w_h) - (1 - s) \phi([u]_h) - s\phi(v)\Big] \,dx
        \end{align*}
        Since $u \in C^0([0,T];L^\phi(\Om))$ by construction of $u$ and $L^{\phi}(\Om) \subset L^1(\Om)$, we can apply \cref{propertiesofmolli} (iv).   This implies that 
        \begin{align*}
            [u]_h \longrightarrow u \;\;\mathrm{ a.e. }\; \Om \times \{\tau\},\\
             [u]_h \longrightarrow u \;\;\mathrm{ a.e. }\; \Om \times \{0\}.
        \end{align*}
       Therefore, using convexity of $\phi$ along with Jensen's inequality and \cref{propertiesofmolli} (iv), we can invoke the dominated convergence theorem to get the following estimates: 
        \begin{align}\label{bbest1}
         \limsup_{h \downarrow 0}\; \int_{0}^{\tau}\int_{\Om} \de_t w_h(b(w_h)-b(u))\,dx\,dt & \leq s \int_{0}^{\tau}\int_{\Om} \de_t v(b(v)-b(u))\,dx\,dt \nonumber\\
          & + \int_{\Om \times \{\tau\}} \Big[\phi(u + s(v - u)) - \phi(u) + s(\phi(u) - \phi(v))\Big] \,dx\nonumber\\
          & - \int_{\Om \times \{0\}} \Big[\phi(u + s(v - u)) - \phi(u) + s(\phi(u) - \phi(v))\Big] \,dx.
        \end{align}
      Using the same argument, we can show that 
      \begin{align}\label{bbest2}
          \lim_{h \downarrow 0}\;\Big[ \mathfrak{B}[u_{0}, w_{h}(0)] -   \mathfrak{B}[u(\tau), w_{h}(\tau)] \Big] = \int_{\Om \times \{0\}} \mathfrak{b}\big[u, u + s(v - u)\big]dx - \int_{\Om \times \{\tau\}} \mathfrak{b}\big[u, u + s(v - u)\big]dx.
      \end{align}
    For the term involving the nonlocal operator, we have pointwise a.e. convergence for $w_h$ using \cref{propertiesofmolli} (i). Then, by the $p$-growth condition \cref{eq:bound_above_H} which allows us to use dominated convergence theorem, we get the following estimates:
    \begin{align}\label{bbest3}
           \lim_{h \downarrow 0}&\int_{0}^{\tau}\iint\limits_{\mathbb{R}^N\times\mathbb{R}^N} \frac{H(x,y,w_{h}(x,t)-w_{h}(y,t))}{|x-y|^N}\,dx\,dy\,dt\nonumber\\
           & \longrightarrow     \;\int_{0}^{\tau}\iint\limits_{\mathbb{R}^N\times\mathbb{R}^N} \frac{H(x,y,[u + s(v - u)](x,t)-[u + s(v - u)](y,t))}{|x-y|^N}\,dx\,dy\,dt.
    \end{align}
    Taking limit $h \longrightarrow 0$ in \cref{27DEC} by way of \cref{bbest1}, \cref{bbest2} and \cref{bbest3}, we obtain
    \begin{align*}
       \int_{\Om \times \{\tau\}} &\mathfrak{b}\big[u, u + s(v - u)\big]dx \\
       &+ \;\int_{0}^{\tau}\iint\limits_{\mathbb{R}^N\times\mathbb{R}^N} \frac{H(x,y,u(x,t)-u(y,t)) - H(x,y,[u + s(v - u)](x,t)-[u + s(v - u)](y,t))}{|x-y|^N}\,dx\,dy\,dt\\
       & \leq  \int_{\Om \times \{0\}} \mathfrak{b}\big[u, u + s(v - u)\big]dx + \int_{0}^{\tau}\int_{\Om} \de_t v(b(v)-b(u))\,dx\,dt \\ 
      & \hspace{5mm}+ \int_{\Om \times \{\tau\}} \Big[\phi(u + s(v - u)) - \phi(u) + s(\phi(u) - \phi(v))\Big] \,dx\\
     &\hspace{5mm} - \int_{\Om \times \{0\}} \Big[\phi(u + s(v - u)) - \phi(u) + s(\phi(u) - \phi(v))\Big] \,dx.
    \end{align*}
    We divide the inequality by $s \in (0,1)$ and using the definition of $\mathfrak{b}$, we have
    \begin{align*}
      0 \leq \frac{1}{s}   &\;\int_{0}^{\tau}\iint\limits_{\mathbb{R}^N\times\mathbb{R}^N} \frac{ H(x,y,[u + s(v - u)](x,t)-[u + s(v - u)](y,t)) - H(x,y,u(x,t)-u(y,t)) }{|x-y|^N}\,dx\,dy\,dt\\ 
      & + \int_{0}^{\tau}\int_{\Om} \de_t v(b(v)-b(u))\,dx\,dt +  \mathfrak{B}[u_{0}, v(0)] -   \mathfrak{B}[u(\tau), v(\tau)].
    \end{align*}
    Making use of the growth assumptions on the derivatives of the $H$ and dominated convergence theorem, we can pass to the limit $s \longrightarrow 0$ in the first integral. Finally, we get the following estimates: 
    \begin{align*}
        \int_{0}^{\tau}\iint\limits_{\mathbb{R}^N\times\mathbb{R}^N} &\frac{D_{\xi}H(x,y,u(x,t)-u(y,t))((v-u)(x,t)-(v-u)(y,t))}{|x-y|^N}\,dx\,dy\,dt+\nonumber\\
            &\int_{0}^{\tau}\int_{\Om} \de_t v(b(v)-b(u))\,dx\,dt \geq \mathfrak{B}[u(\tau) ,v(\tau)]-\mathfrak{B}[u_0,v(0)],
    \end{align*} 
    for all $v:(0,T)\times\RR^N\to [0,\infty)$ such that $v\in L^p(0,T;W^{s,p}(\mathbb{R}^N))$ and $\de_t v\in L^\phi(0,T;\Om)$ such that $v-u_0\in L^p(0,T;W_0^{s,p}(\Om))$ and $v(0)\in L^\phi(\Om)$.
    \end{proof}

\section{Variational solutions are distributional solutions}
In this section, we want to check whether the variational solutions are also distributional solutions. We will show that this is the case when the nonlinearity $b$ satisfies
\begin{align}\label{new1}
l\le \frac{u b'(u)}{b(u)}\le m, \mbox{ for given constants } m\ge l\ge 1,
\end{align}
and for functions H satisfying, additionally, 
\begin{align}\label{new2}
H\ge 0 \mbox{ and } H (x, y, 0) = 0 \mbox{ a.e. } x,y \in \mathbb{R}^N\times \mathbb{R}^N
\end{align}

We make a few comments about these assumptions.

The non-negativity of solutions can also be seen as solving an obstacle problem with $0$ function as an obstacle. The assumption \cref{new2} corresponds to the $0$ function being a minimum of the nonlocal functional with zero boundary values.

The assumption $l\ge 1$ is required so that $\de_t b(u)$ does not become singular, that is, it is zero whenever u is zero. The assumption $l\ge 1$ also guarantees Lipschitz continuity of $b$ which matches the assumption in the existence result of Alt \& Luckhaus \cite[Theorem 2.3]{altQuasilinearEllipticparabolicDifferential1983}. However, we can relax their assumption of $p$-growth from above. The following theorem is analogous to \cite[Theorem 7.1]{bogeleinDoublyNonlinearEquations2018}.

\begin{theorem} Assume that the function $H$ satisfies \cref{eq:bound_below_H}, \cref{eq: cvx_H} and \cref{new2} and $\xi\rightarrow H(x,y,\xi)$ is $C^1$. For the function $b$, assume that \cref{new1} holds and the initial and boundary data satisfies \cref{datahypo}. Let u $\in$ $C^0([0,T];L^\phi(\Omega))\cap L^p(0,T;W^{s,p}_{u_0}(\Omega))$ be as obtained in \cref{mainthm:e}. If for every $\psi \in C_0^\infty(\Omega_T)$ there is a function $F \in L^{\frac{p}{p-1}}(\RR^N \times \RR^N \times (0,T))$, which may depend on $u$ and $\psi$, so that
\begin{align}\label{upper1}
\frac{|D_\xi H(x,y,u(x,t)-u(y,t)+s(\psi(x,t)-\psi(y,t)))|}{|x-y|^{N-\frac{N}{p}-s}} \leq F
\end{align}
holds almost everywhere for every $0<s<1$, then u is a solution in the sense of distributions, that is
\begin{align}
\iint\limits_{\Omega_T} -b(u)\de_t\psi \,dx\,dt + \int_0^T\iint\limits_\mathbb{R^N\times R^N} \frac{D_\xi H(x,y,u(x,t)-u(y,t)) (\psi(x,t)-\psi(y,t))}{|x-y|^N} \,dx\,dy\,dt = 0
\end{align}
\end{theorem}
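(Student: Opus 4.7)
The plan is to test the variational inequality \cref{defvar} with comparison maps $v = [u]_h + s\psi$ and $v = [u]_h - s\psi$ for $s > 0$ small, $\psi \in C_0^\infty(\Omega_T)$, where $[u]_h$ is the time mollification \cref{timemollify} initialized with $u_0$. Dividing by $s$ and passing to the limit $h \to 0$ then $s \to 0^+$ yields one-sided inequalities from each sign choice, which combine to the desired equality. Since $\psi$ has compact support in $\Omega_T$, $v(0) = u_0$ and $v(T) = [u]_h(T) \to u(T)$ in $L^\phi$, so both boundary contributions $\mathfrak{B}[u(T), v(T)]$ and $\mathfrak{B}[u_0, v(0)]$ vanish in the limit $h \to 0$, and the mollification provides $\partial_t v \in L^\phi(\Omega_T)$. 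For non-negativity when $\psi$ changes sign one replaces $v$ by $v_+ = \max\{0, v\}$, with the error set $\{[u]_h + s\psi < 0\}$ shrinking to a subset of $\{u = 0\}$ as $h, s \to 0$; the assumption $l \geq 1$ then ensures $b(u) = 0$ at $u = 0$ with enough regularity that the truncation contributions vanish.

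The key algebraic step treats the time-derivative integral. Introduce the auxiliary function
\begin{align*}
\Phi_\sigma(r) := \int_0^r \bigl(b(\rho + \sigma) - b(\rho)\bigr)\,d\rho,
\end{align*}
whose chain rule with $\sigma = s\psi(x,t)$, $r = [u]_h(x,t)$ gives
\begin{align*}
\partial_t \Phi_{s\psi}([u]_h) = \bigl(b([u]_h + s\psi) - b([u]_h)\bigr)\partial_t [u]_h + s\,\partial_t \psi \cdot \bigl(b([u]_h + s\psi) - b(s\psi)\bigr).
\end{align*}
Since $\psi(\cdot,0) = \psi(\cdot,T) = 0$, $\Phi_0 \equiv 0$, so integrating over $\Omega_T$ produces the exact identity
\begin{align*}
\int_0^T\!\!\int_\Omega \bigl(b([u]_h + s\psi) - b([u]_h)\bigr)\partial_t [u]_h\,dx\,dt = -s\int_0^T\!\!\int_\Omega \partial_t \psi\,\bigl(b([u]_h + s\psi) - b(s\psi)\bigr)\,dx\,dt.
\end{align*}
Combining with $\partial_t v = \partial_t [u]_h + s\,\partial_t \psi$ and the monotonicity inequality $\partial_t [u]_h\,(b([u]_h) - b(u)) = (u - [u]_h)(b([u]_h) - b(u))/h \leq 0$, straightforward algebra collapses the time integral to
\begin{align*}
\int_0^T\!\!\int_\Omega \partial_t v\,(b(v) - b(u))\,dx\,dt = \underbrace{\int_0^T\!\!\int_\Omega \partial_t [u]_h\,(b([u]_h) - b(u))\,dx\,dt}_{\leq\, 0} + s\int_0^T\!\!\int_\Omega \partial_t \psi\,\bigl(b(s\psi) - b(u)\bigr)\,dx\,dt.
\end{align*}
Since the first term is non-positive it may be dropped in the variational inequality (preserving the inequality direction). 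Dividing by $s$ and using $b(s\psi) \to b(0) = 0$ as $s \to 0^+$ identifies the limit of this term as $-\int_0^T \int_\Omega b(u)\,\partial_t \psi\,dx\,dt$.

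For the nonlocal term, the identity
\begin{align*}
\frac{H(x,y,v(x)-v(y)) - H(x,y,[u]_h(x)-[u]_h(y))}{s} = \int_0^1 D_\xi H\bigl(x,y,[u]_h(x)-[u]_h(y) + \sigma s(\psi(x)-\psi(y))\bigr)\bigl(\psi(x)-\psi(y)\bigr)\,d\sigma
\end{align*}
combined with hypothesis \cref{upper1} supplies the required uniform integrable majorant: applying H\"older's inequality with $F \in L^{p'}$ and the finite Gagliardo seminorm $[\psi]_{W^{s,p}}$ gives the dominating function, and dominated convergence then yields the limit $\int_0^T \iint D_\xi H(x,y,u(x,t)-u(y,t))(\psi(x,t)-\psi(y,t))/|x-y|^N\,dx\,dy\,dt$ (the auxiliary passage $[u]_h \to u$ uses a.e.\ convergence within the same bound). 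The variational inequality tested with $v = [u]_h + s\psi$ thus produces in the limit
\begin{align*}
\int_0^T\!\!\iint_{\RR^N\times\RR^N} \frac{D_\xi H(x,y,u(x,t)-u(y,t))\,(\psi(x,t)-\psi(y,t))}{|x-y|^N}\,dx\,dy\,dt - \int_0^T\!\!\int_\Omega b(u)\,\partial_t \psi\,dx\,dt \geq 0,
\end{align*}
and repeating with $\psi$ replaced by $-\psi$ yields the reverse inequality, hence equality. The main obstacle is the rigorous control of the non-negativity truncation $v \mapsto v_+$: proving the error introduced in both the time-derivative and nonlocal integrals vanishes in the limit $s, h \to 0$ crucially requires $l \geq 1$ (so $b(u)$ vanishes at $u = 0$) together with \cref{new2} (so $H(x,y,0) = 0$ makes the nonlocal contribution of zero regions vanish), along with a quantitative estimate showing $|\{[u]_h + s\psi < 0\}| \to 0$ fast enough to absorb any blow-up of $\partial_t [u]_h$ on this set.
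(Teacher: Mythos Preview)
Your $\Phi_\sigma$ identity for the time-derivative term is a genuinely different device from the paper's and, where it applies, is cleaner: it produces an exact decomposition rather than an inequality. However, the truncation step is not a technicality you can defer --- it is precisely where the argument succeeds or fails, and your proposed mechanism does not close the gap.

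The concrete obstruction is this. Writing (with $b$ extended by $0$ on the negatives)
\[
\int_{\{v>0\}}\partial_t v\,(b(v)-b(u))\;=\;[\text{your identity on }\Omega_T]\;+\;\int_{\{v\leq 0\}}\partial_t v\cdot b(u),
\]
the error term contains $\int_{\{[u]_h+s\psi\leq 0\}}\partial_t[u]_h\cdot b(u)$. On this set only $[u]_h$ is small ($\leq s\|\psi\|_\infty$), not $u$; and $\partial_t[u]_h=(u-[u]_h)/h$ carries no $h$-uniform bound in any $L^r$. The only time-regularity supplied by \cref{mainthm:e} is $\partial_t\sqrt{\phi(u)}\in L^2(\Omega_T)$, which says nothing about $\partial_t u$ or about $\partial_t[u]_h$ restricted to a shrinking set. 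So ``the set shrinks fast enough to absorb the blow-up of $\partial_t[u]_h$'' is not a viable strategy: there is no rate to trade against.

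The paper resolves exactly this point by a convexity step you are missing. Rather than keep $\partial_t[u]_h\cdot b(u)$, one observes (using $b=\phi'$, convexity of $\phi$, and Jensen for the mollification)
\[
-\partial_t[u]_h\, b(u)=\tfrac{1}{h}([u]_h-u)\phi'(u)\leq\tfrac{1}{h}\bigl(\phi([u]_h)-\phi(u)\bigr)\leq\tfrac{1}{h}\bigl([\phi(u)]_h-\phi(u)\bigr)=-\partial_t[\phi(u)]_h,
\]
and $\partial_t[\phi(u)]_h\to\partial_t\phi(u)=2\sqrt{\phi(u)}\,\partial_t\sqrt{\phi(u)}$ in $L^1(\Omega_T)$ precisely because $\partial_t\sqrt{\phi(u)}\in L^2$. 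The truncation error becomes $\int_{\{u+s\psi\leq 0\}}|\partial_t\phi(u)|$, where now $u\leq s\|\psi\|_\infty$ forces $\sqrt{\phi(u)}\leq cs^{(l+1)/2}$ via \cref{techlem1}; with $l\geq 1$ this is $O(s)$ and the term vanishes after dividing by $s$. You could still use your $\Phi_\sigma$ identity for the bulk of the computation, but to survive the truncation you must splice in this potential-based replacement of $\partial_t[u]_h\,b(u)$ by $\partial_t[\phi(u)]_h$.

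A smaller issue: the majorant in \cref{upper1} is stated with $u$, not $[u]_h$, so your mean-value step for the nonlocal term with $[u]_h$ in place of $u$ is not directly covered. The paper sidesteps this by first bounding $H(x,y,v_h(x)-v_h(y))$ by the mollification $[H(x,y,(u+s\psi)(x)-(u+s\psi)(y))]_h$ via convexity (\cref{convergenceoffunctionals}), and only then invoking \cref{upper1}.
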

\begin{remark}
    In particular, the assumption \cref{upper1} is satisfied when $H$ satisfies the $p$-growth condition as in \cref{eq:bound_above_H}.
\end{remark}
\begin{proof}
Notice that for every $s\in (0,1)$ and every testing function $\psi \in C^\infty_0(\Omega_T)$, assumption \cref{upper1} implies that
\begin{align}\label{new3}
&\frac{|H(x,y,u(x,t)-u(y,t)+s(\psi(x,t)-\psi(y,t))|}{|x-y|^N}\leq \frac{|H(x,y,u(x,t)-u(y,t)|}{|x-y|^N}\nonumber\\
&\qquad+s\int_0^1 \frac{|D_\xi H(x,y,u(x,t)-u(y,t)+s\sigma (\psi(x,t)-\psi(y,t)))|\psi(x,t)-\psi(y,t)|}{|x-y|^N}d\sigma\nonumber\\
&\qquad\qquad\leq \frac{|H(x,y,u(x,t)-u(y,t)|}{|x-y|^N}+\left(\frac{|\psi(x,t)-\psi(y,t)|}{|x-y|^{\frac{N}{p}+s}}\right) F(x,y,t)
\end{align}
holds a.e. $(x,y,t)\in \mathbb{R^N\times R^N}\times(0,T)$. We know that $\frac{H(x,y,u(x,t)-u(y,t))}{|x-y|^N}\in L^1(\mathbb{R^N\times R^N}\times(0,T))$, therefore by \cref{new3}, \cref{upper1} and H\"older's inequality, we get
\begin{align}\label{new4}
\int_0^T\iint\limits_{\mathbb{R}^N\times\mathbb{ R}^N}\frac{|H(x,y,u(x,t)-u(y,t)+s(\psi(x,t)-\psi(y,t))|}{|x-y|^N}\,dx\,dy\,dt < \infty
\end{align}
for all $\psi\in C^\infty_0(\mathbb{R}^N\times\mathbb{R}^N\times(0,T))$ and uniformly in $s\in(0,1)$. 

We would like to use $v=u+s\psi$ as a comparision function in the variational inequality, however it does not have the requisite time regularity. For this purpose, we consider $v_h := [u]_h + s[\psi]_h$ where the initial value for computing $[u]_h$ is taken to be $u_0$, while for $[\psi]_h$, we choose $\psi(0) = 0$. Moreover, we must choose $(v_h)_+$ as the comparison function in the variational inequality \cref{defvar} since $v_h$ might become negative. As usual, $\de_t (v_h)_+=(\de_t v_h)_+$. Also recall that $(v_h)_+(0) = v_h(0) = u_0$. Therefore, there is no boundary term at $t=0$ in the variational inequality. Thus, we get
\begin{align}\label{varineq33}
    \mathfrak{B}[u(T),(v_h)_+(T)]&+\int_0^T\iint\limits_{\mathbb{R}^N\times\mathbb{ R}^N}\frac{H(x,y,u(x,t)-u(y,t))}{|x-y|^N}\,dx\,dy\,dt\nonumber\\
&\leq
\int_0^T\iint\limits_{\mathbb{R}^N\times\mathbb{ R}^N}\frac{H(x,y,(v_h)_+(x,t)-(v_h)_+(y,t))}{|x-y|^N}\,dx\,dy\,dt\nonumber\\
&\qquad\qquad+\int_0^T\int_{\Omega}\de_t(v_h)_+(b((v_h)_+)-b(u))\,dx\,dt\nonumber\\
&\leq
\underbrace{\int_0^T\iint\limits_{\mathbb{R}^N\times\mathbb{ R}^N}\frac{H(x,y,v_h(x,t)-v_h(y,t))}{|x-y|^N}\,dx\,dy\,dt}_{I}\nonumber\\
&\qquad\qquad+\underbrace{\iint\limits_{\Omega_T\cap\{v_h\geq 0\}}\de_t v_h(b(v_h)-b(u))\,dx\,dt}_{II},
\end{align} where the last inequality follows since the function $\xi\mapsto H(x,y,\xi)$ attains its minimum at $\xi=0$ by \cref{new2}.

By \cref{convergenceoffunctionals}, we have 
\begin{align}
\frac{H(x,y,v_h(x,t)-v_h(y,t))}{|x-y|^N}\leq \left[\frac{H(x,y,u(x,t)-u(y,t)+s(\psi(x,t)-\psi(y.t)))}{|x-y|^N}\right]_h,
\end{align} where the initial value for computing the mollification is taken to be $\frac{H(x,y,u_0(x)-u_0(y))}{|x-y|^N}$. We know from \cref{new4} that $\frac{H(x,y,(u+s\psi)(x,t)-(u+s\psi)(y,t))}{|x-y|^N}\in L^1(\mathbb{R^N\times R^N}\times(0,T))$. Hence from \cref{propertiesofmolli} (i), we receive
\begin{align}\label{limitI}
\limsup_{h\downarrow 0} I \leq \int_0^T\iint\limits_{\mathbb{R}^N\times\mathbb{ R}^N}\frac{H(x,y,u(x,t)-u(y,t)+s(\psi(x,t)-\psi(y,t)))}{|x-y|^N}\,dx\,dy\,dt.
\end{align}

We turn to the second term $II$. This part is handled exactly as in \cite{bogeleinDoublyNonlinearEquations2018}. By the definition of $v_h$, the term $II$  can be expanded to the following three terms:
\begin{align}\label{estII}
II = \underbrace{-s\iint\limits_{\Omega_T\cap\{v_h\geq 0\}}\de_t[\psi]_h b(u)\,dx\,dt}_{II_1} - \underbrace{\iint\limits_{\Omega_T\cap\{v_h\geq 0\}} \de_t[u]_h b(u)\,dx\,dt}_{II_2}+\underbrace{\iint\limits_{\Omega_T}\de_t(v_h)_+ b((v_h)_+)\,dx\,dt}_{II_3}.
\end{align}

For the integrand in $II_2$, we have the following chain of estimates
\begin{align}
-\de_t[u]_h b(u)\stackrel{\cref{generalmolliX}}{=}\frac{1}{h}([u]_h-u)\phi'(u)&\le \frac{1}{h}(\phi([u]_h)-\phi(u))\nonumber\\
&\le \frac{1}{h}([\phi(u)]_h-\phi(u))\nonumber\\
&\stackrel{\cref{generalmolliX}}{=}-\de_t[\phi(u)]_h
\end{align}
where the final inequality is a consequence of convexity of $\phi$. The mollification of $\phi(u)$ is defined in the usual way with initial condition $\phi(u_0)$. 

As a result, we receive
\begin{align}\label{estII2}
II_{2}&\leq -\iint\limits_{\Omega_T\cap\{v_h\geq 0\}} \de_t[\phi(u)]_h\,dx\,dt\nonumber\\
&=-\iint\limits_{\Omega_T} \de_t[\phi(u)]_h\,dx\,dt+\iint\limits_{\Omega_T\cap\{v_h\leq 0\}} \de_t[\phi(u)]_h\,dx\,dt\nonumber\\
&=\int_{\Omega}\phi(u_0)\,dx-\int_{\Omega}[\phi(u)]_h(T)\,dx+\iint\limits_{\Omega_T\cap\{v_h\leq 0\}} \de_t[\phi(u)]_h\,dx\,dt
\end{align}

For the integral $II_{3}$ we write
\begin{align}\label{estII3}
II_{3}\stackrel{\phi'=b}{=}\iint\limits_{\Omega_T}\de_t[\phi((v_h)_+)]\,dx\,dt=\int\limits_{\Omega}\phi(([u]_h+s[\phi]_h)_+)(T)\,dx-\int\limits_{\Omega}\phi(u_0)\,dx
\end{align}

Substituting \cref{estII2} and \cref{estII3} in \cref{estII}, we receive
\begin{align}\label{estIImore}
II\leq -s\iint\limits_{\Omega_T}\de_t[\psi]_h b(u)\,dx\,dt&+\underbrace{\iint\limits_{\Omega_T\cap\{v_h\le 0\}}[s\de_t[\psi]_h b(u)+\de_t[\phi(u)]_h]\,dx\,dt}_{III}\nonumber \\
&\qquad+ \underbrace{\int\limits_{\Omega}[\phi(([u]_h+s[\psi]_h)_+)(T)-[\phi(u)]_h](T)\,dx}_{IV}.
\end{align}

By \cref{mainthm:e}, we have $\de_t\phi(u)=2\sqrt{\phi(u)}\de_t\sqrt{\phi(u)}\in L^1(\Omega_T)$ therefore by \cref{propertiesofmolli} (vi), we conclude that $\de_t[\phi(u)]_h \rightarrow \de_t\phi(u)$ in $L^1(\Omega_T)$ as $h \downarrow 0$. 

We also have $\de_t[\psi]_h \rightarrow \de_t\psi$ uniformly on $\Omega_T$ and $\limsup_{h \downarrow 0}\chi_{\{v_h\le 0\}}\le \chi_{\{u+s\psi\le 0\}}$ almost everywhere in $\Omega_T$.

Hence, we have
\begin{align}\label{estIII}
\limsup_{h\downarrow 0} III \le
\iint\limits_{\Omega_T\cap\{u+s\psi\le 0\}}[s|\de_t\psi| |b(u)|+|\de_t\phi(u)|]\,dx\,dt
\end{align}

By \cref{propertiesofmolli} (iv), we also have that $[u]_h(T) \rightarrow u(T)$ and $[\psi]_h (T)\rightarrow 0$ pointwise a.e. on  $\Omega$. Then by convexity of $\phi$ (which leads to $\phi(v_h)\leq [\phi(u+s\psi)]_h$), \cref{propertiesofmolli} (i) and a version of dominated convergence theorem, we find that the integral $IV$ vanishes in the limit $h \downarrow 0$. With this knowledge and \cref{estIII}, we obtain 
\begin{align}\label{estIIthree}
\limsup_{h\downarrow 0}II \leq -s\iint\limits_{\Omega_T}\de_t\psi\, b(u)\,dx\,dt+\underbrace{\iint\limits_{\Omega_T\cap\{u+s\psi\le 0\}}[s|\de_t\psi| |b(u)|+|\de_t\phi(u)|]\,dx\,dt}_{V}
\end{align}

For the term $V$ in \cref{estIIthree}, the function $u$ satisfies $u\leq s\norm{\psi}_{L^\infty}$. Therefore, using monotonicity of $\phi$ and the property (4) in \cref{techlem1}, we get 
\begin{align}\label{somest}
|\de_t\phi(u)|\leq 2\sqrt{\phi(u)}|\de_t\sqrt{\phi(u)}|\leq
\sqrt{\phi(\psi)}\,s^{\frac{l+1}{2}}|\de_t\sqrt{\phi(u)}|
\end{align}
for any $s \in (0, 1)$. 

Substituting \cref{somest} in \cref{estIIthree}, we conclude 
\begin{align}\label{estIIfour}
\limsup_{h\downarrow 0} II \leq -s\iint\limits_{\Omega_T}\de_t\psi\, b(u)\,dx\,dt + c s \iint\limits_{\Omega_T\cap\{u+s\psi\le 0\}}[ |b(u)|+|\de_t\phi(u)|]\,dx\,dt,
\end{align}
where we use the fact that $l\geq 1$, $s\in (0,1)$ and the constant $c$ depends on $\psi$.

Now, on dividing inequality \cref{varineq33} by $s$ and substituting \cref{limitI} and \cref{estIIfour} in order to pass to the limit $h \downarrow 0$ while using the non-negativity of the boundary term, we receive
\begin{align}\label{best1}
\iint\limits_{\Omega_T}\de_t\psi b(u)\,dx\,dt \leq & \frac{1}{s} \int_0^T\hspace{-0.5cm}\iint\limits_{\RR^N\times\RR^N} \frac{H(x,y,(u+s\psi)(x,t)-(u+s\psi)(y,t))-H(x,y,u(x,t)-u(y,t))}{|x-y|^N}\,dx\,dy\,dt\nonumber\\
+ & c\iint\limits_{\Omega_T\cap\{u+s\psi\le 0\}}[ |b(u)|+|\de_t\phi(u)|]\,dx\,dt
\end{align}

By mean value theorem we write the first integral on the right-hand side of \cref{best1} as
\begin{align}\label{best2}
\frac{1}{s} &\int_0^T\iint\limits_{\mathbb{R}^N\times \mathbb{R}^N} \frac{H(x,y,(u+s\psi)(x,t)-(u+s\psi)(y,t))-H(x,y,u(x,t)-u(y,t))}{|x-y|^N}\,dx\,dy\,dt\nonumber\\
&=\int_0^T\iint\limits_{\mathbb{R}^N\times \mathbb{R}^N} \int_0^1 \frac{D_\xi H(x,y,(u+s\sigma\psi)(x,t)-(u+s\sigma\psi)(y,t))\cdot(\psi(x,t)-\psi(y,t))}{|x-y|^N}\,d\sigma\,dx\,dy\,dt
\end{align}

On account of \cref{upper1} and H\"older's inequality, the integrand in the above integral satisfies 
\begin{align}
    &\frac{D_\xi H(x,y,(u+s\sigma\psi)(x,t)-(u+s\sigma\psi)(y,t))\cdot(\psi(x,t)-\psi(y,t))}{|x-y|^N}\nonumber\\
    &\qquad\qquad\leq \left(\frac{|\psi(x,t)-\psi(y,t)|}{|x-y|^{\frac{N}{p}+s}}\right)F \in L^1(\mathbb{R}^N\times \mathbb{R}^N\times (0,T)),
\end{align}
independently of $s \in (0, 1)$. On passing to the limit as $s \downarrow 0$ on the right-hand side of \cref{best2}, we obtain 
\begin{align}\label{best3}
    \lim_{s\downarrow 0}\frac{1}{s} &\int_0^T\iint\limits_{\mathbb{R}^N\times \mathbb{R}^N} \frac{H(x,y,(u+s\psi)(x,t)-(u+s\psi)(y,t))-H(x,y,u(x,t)-u(y,t))}{|x-y|^N}\,dx\,dy\,dt\nonumber\\
    &=\int_0^T\iint\limits_{\mathbb{R}^N\times \mathbb{R}^N} \int_0^1 \frac{D_\xi H(x,y,u(x,t)-u(y,t))\cdot(\psi(x,t)-\psi(y,t))}{|x-y|^N}\,d\sigma\,dx\,dy\,dt
    \end{align}

We also have 
\begin{align}\label{best4}
\lim_{s\downarrow 0}\iint\limits_{\Omega_T\cap\{u+s\psi\le 0\}}[ |b(u)|+|\de_t\phi(u)|]\,dx\,dt=\iint\limits_{\Omega_T\cap\{u = 0\}}[ |b(u)|+|\de_t\phi(u)|]\,dx\,dt=0
\end{align}
since $b(0) = 0 = \phi(0)$ and $\de_t\sqrt{\phi(u)} = 0$ almost everywhere where $\sqrt{\phi(u)} = 0$.

Substituting \cref{best3} and \cref{best4} in \cref{best1}, we receive 
\begin{align*}
\iint\limits_{\Omega_T}\de_t\psi\, b(u)\,dx\,dt \leq \int_0^T\iint\limits_{\mathbb{R}^N\times \mathbb{R}^N} \frac{D_\xi H(x,y,u(x,t)-u(y,t))}{|x-y|^N}.(\psi(x,t)-\psi(y,t))\,dx\,dy\,dt.
\end{align*}

We obtain equality above by replacing $\psi$ with $-\psi$ which is what we set out to prove.
\end{proof}

\begin{remark}
	The assumption \cref{upper1} can be substituted with either of the following two conditions. It is not clear which among the three conditions is best. However, \cref{upper1} subsumes \cref{eq:bound_above_H}.
	\begin{itemize}
		\item[(H1).] For every $\psi \in C_0^\infty(\Omega_T)$ there is a function $F \in L^1(\RR^N \times \RR^N \times (0,T))$, which may depend on $u$ and $\psi$, so that $$\frac{|D_\xi H(x,y,u(x,t)-u(y,t)+s(\psi(x,t)-\psi(y,t)))|}{|x-y|^{N}} \leq F$$
		holds almost everywhere for every $0<s<1$.
		\item[(H2).] For every $\psi \in C_0^\infty(\Omega_T)$ there is a function $F \in L^1(\RR^N \times \RR^N \times (0,T))$, which may depend on $u$ and $\psi$, so that
		\begin{align*}
			\frac{|D_\xi H(x,y,u(x,t)-u(y,t)+s(\psi(x,t)-\psi(y,t)))|}{|x-y|^{N-1}} \leq F
		\end{align*}
		holds almost everywhere for every $0<s<1$
	\end{itemize}
\end{remark}

\appendix
\section{Mollification in Time}\label{Molli}
\renewcommand{\thesection}{\Alph{section}}
In the definition of variational solutions, the test functions or comparison functions have additional time regularity compared to the solutions, therefore the variational solutions themselves cannot be used as comparison functions. To fix this, we need a smoothening in time. Let $\Omega$ be an open subset of $\mathbb{R}^N$. For $T>0$, $v\in L^1(\Omega_T)$, $v_0\in L^1(\Omega)$ and $h\in (0,T]$, we define
\begin{align}
    \label{timemolli}
    [v]_h(\cdot,t)=e^{-\frac{t}{h}}v_0 + \frac{1}{h}\int_0^t e^{\frac{s-t}{h}}v(\cdot,s)\,ds,
\end{align} for $t\in [0,T]$. The basic properties of time mollification were proved earlier in~\cite{kinnunenPointwiseBehaviourSemicontinuous2006,bogeleinParabolicSystemsQGrowth2013}. We state them below for easy reference and prove the ones that are new.

\begin{proposition}(\cite[Lemma~B.1]{bogeleinParabolicSystemsQGrowth2013})\label{generalmolliX}
Let $X$ be a Banach space and assume that $v_0\in X$, and moreover $v\in L^r(0,T;X)$ for some $1\leq r\leq\infty$. Then, the mollification in time defined by~\eqref{timemolli} belongs to $L^r(0,T;X)$ and
\begin{align}
    ||[v]_h||_{L^r(0,T;X)}\leq ||v||_{L^r(0,t_0;X)}+ \left(\frac{h}{r}\left(1-e^{-\frac{t_0 r}{h}}\right)\right)^{\frac{1}{r}}||v_0||_{X},
\end{align} for any $t_0\in(0,T)$. Moreover, we have
\begin{align}
    \de_t[v]_h\in L^r(0,T;X)\mbox{ and } \de_t[v]_h=-\frac{1}{h}([v]_h-v).
\end{align}
\end{proposition}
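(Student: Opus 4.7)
The plan is to verify both claims by direct computation from the defining formula
\[[v]_h(\cdot,t)=e^{-t/h}v_0+\frac{1}{h}\int_0^t e^{(s-t)/h}v(\cdot,s)\,ds,\]
treating the two summands separately.

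First, for the derivative identity, I would differentiate the convolution term with the Leibniz rule, obtaining
\[\de_t\left(\frac{1}{h}\int_0^t e^{(s-t)/h}v(\cdot,s)\,ds\right)=\frac{1}{h}v(\cdot,t)-\frac{1}{h^2}\int_0^t e^{(s-t)/h}v(\cdot,s)\,ds,\]
and combining this with $\de_t(e^{-t/h}v_0)=-h^{-1}e^{-t/h}v_0$ and regrouping yields the asserted identity $\de_t[v]_h=-h^{-1}([v]_h-v)$. To make this rigorous in the Bochner sense when $v$ is merely in $L^r(0,T;X)$, I would first verify it for continuous $X$-valued functions (where pointwise differentiation in $X$ is justified), and then pass to the limit by density using the $L^r$-bound on the mollification operator established next to see that both sides converge in $L^r(0,T;X)$.

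For the norm estimate, the starting point is the pointwise bound in $X$-norm,
\[\|[v]_h(\cdot,t)\|_X\leq e^{-t/h}\|v_0\|_X+\frac{1}{h}\int_0^t e^{(s-t)/h}\|v(\cdot,s)\|_X\,ds,\]
obtained from the triangle inequality for the Bochner integral. I would then apply Minkowski's inequality in $L^r(0,T)$ to the two terms on the right-hand side. The initial-data term contributes
\[\left(\int_0^{t_0} e^{-tr/h}\,dt\right)^{1/r}\|v_0\|_X=\left(\frac{h}{r}\bigl(1-e^{-t_0 r/h}\bigr)\right)^{1/r}\|v_0\|_X,\]
which is exactly the constant appearing in the statement. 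The convolution term is estimated by Young's convolution inequality applied to the scalar function $t\mapsto\|v(\cdot,t)\|_X$ (extended by zero outside $(0,T)$) against the kernel $k_h(t):=h^{-1}e^{-t/h}\chi_{[0,\infty)}(t)$; since $\|k_h\|_{L^1(\mathbb{R})}=1$, this contribution is bounded by $\|v\|_{L^r(0,T;X)}$. The case $r=\infty$ is handled directly by taking the essential supremum of the same pointwise bound.

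Finally, $\de_t[v]_h\in L^r(0,T;X)$ follows at once from the derivative identity together with the $L^r$-bound on $[v]_h$ just proved and the hypothesis $v\in L^r(0,T;X)$, via
\[\|\de_t[v]_h\|_{L^r(0,T;X)}=\frac{1}{h}\|[v]_h-v\|_{L^r(0,T;X)}\leq\frac{1}{h}\bigl(\|[v]_h\|_{L^r(0,T;X)}+\|v\|_{L^r(0,T;X)}\bigr).\]
The only subtlety, rather than an obstacle, is the vector-valued interpretation of Young's convolution inequality; this reduces cleanly to the scalar case via the pointwise domination $\|(k_h\ast f)(t)\|_X\leq(k_h\ast\|f(\cdot)\|_X)(t)$ valid for Bochner-integrable $f$, and a corresponding Minkowski integral inequality for $X$-valued integrands.
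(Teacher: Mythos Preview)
Your argument is correct and is the standard one. Note, however, that the paper does not supply its own proof of this proposition: it is stated in the appendix with a citation to \cite[Lemma~B.1]{bogeleinParabolicSystemsQGrowth2013} and no further justification, so there is nothing in the present paper to compare against. Your proof sketch (pointwise norm bound via the triangle inequality for Bochner integrals, explicit $L^r$-computation for the $e^{-t/h}v_0$ term, Young's convolution inequality with the $L^1$-normalized kernel $h^{-1}e^{-t/h}\chi_{[0,\infty)}$ for the integral term, and Leibniz differentiation for the ODE identity) is exactly the argument one finds in the cited reference, so in that sense your approach coincides with the intended one.
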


\begin{lemma}(\cite[Lemma~2.2]{bogeleinExistenceEvolutionaryVariational2014})
\label{propertiesofmolli}
Let $\Omega$ be an open subset of $\mathbb{R}^N$. Suppose that $v\in L^1(\Omega_T)$ and $v_0\in L^1(\Omega)$. Then, the mollification in time as defined in~\eqref{timemolli} satisfies the following properties:
\begin{itemize}
    \item[(i)] Assume that $v\in L^p(\Omega_T)$ and $v_0\in L^p(\Omega)$ for some $p\geq 1$. Then, it holds true that $[v]_h\in L^p(\Omega_T)$ and the following quantitative bound holds.
    \begin{align}
    ||[v]_h||_{L^p(\Omega_T)}\leq ||v||_{L^p(\Omega_T)}+ h^{1/p}||v_0||_{L^p(\Omega)}.
\end{align}
Moreover, $[v]_h\to v$ in $L^p(\Omega_T)$ and pointwise a.e. in $\Om_T$ as $h\to 0$.
\item[(ii)] Assume that $v\in L^p(0,T;W^{s,p}(\Omega))$ and $v_0\in W^{s,p}(\Omega)$ for some $p > 1$ and $s\in (0,1]$. Then, it holds true that $[v]_h\in L^p(0,T;W^{s,p}(\Omega))$ and the following quantitative bound holds.
    \begin{align}
    ||[v]_h||_{L^p(0,T;W^{s,p}(\Omega))}\leq ||v||_{L^p(0,T;W^{s,p}(\Omega))}+ h^{1/p}||v_0||_{W^{s,p}(\Omega)}.
\end{align}
Moreover, $[v]_h\to v$ in $L^p(0,T;W^{s,p}(\Omega))$ as $h\to 0$.
\item[(iii)] Suppose that $v\in L^p(0,T;W^{s,p}_0(\Omega))$ and $v_0\in W^{s,p}_0(\Omega)$ for some $p > 1$ and $s\in (0,1]$. Then, it holds true that $[v]_h\in L^p(0,T;W^{s,p}_0(\Omega))$.
\item[(iv)] Suppose that $v\in C^0([0,T];L^2(\Omega))$ and $v_0\in L^2(\Omega)$. Then, it holds true that $[v]_h\in C^0([0,T];L^2(\Omega))$, $[v]_h(\cdot,0)=v_0$. Moreover, $[v]_h\to v$ in $C^0([0,T];L^2(\Omega))$ as $h\to 0$ and pointwise a.e. in $\Om$ for every $t\in [0,T]$.
\item[(v)] Suppose that $v\in L^\infty(0,T;L^2(\Omega))$ and $v_0\in L^2(\Omega)$. Then, it holds true that $[v]_h\in L^\infty(0,T;L^2(\Omega))$. Moreover, $[v]_h=-\frac{1}{h}([v]_h-v)$.
\item[(vi)] Let $r\geq 1$. Suppose that $\de_t v\in L^r(\Omega_T)$ then $\de_t [v]_h\to \de_t v$ in $L^r(\Omega_T)$ as $h\to 0$.
\end{itemize}
\end{lemma}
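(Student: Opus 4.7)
The plan is to recognise $[v]_h$ as the time-convolution of the extension $\tilde v:=v_0\chi_{\{t\le 0\}}+v\chi_{\{t>0\}}$ against the one-sided exponential kernel $k_h(\tau):=\tfrac{1}{h}e^{-\tau/h}\chi_{\{\tau\ge 0\}}$, since a direct computation yields $[v]_h(\cdot,t)=(k_h\ast_t\tilde v)(\cdot,t)$ for $t\in[0,T]$. Because $\|k_h\|_{L^1(\RR)}=1$, Young's (or equivalently Minkowski's integral) inequality in the time variable will immediately give every $L^r$-type bound, while convergence statements will follow from density of continuous functions together with the fact that $k_h$ is a standard approximation to the identity on the half-line. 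The only non-routine item is (vi), which hinges on the differentiation identity $\partial_t[v]_h=-\tfrac{1}{h}([v]_h-v)$ already recorded in \cref{generalmolliX}.

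\textbf{Items (i), (ii), (iii), (v).} For (i), Minkowski's integral inequality applied slice by slice gives $\|[v]_h(\cdot,t)\|_{L^p(\Om)}\le e^{-t/h}\|v_0\|_{L^p(\Om)}+\tfrac{1}{h}\int_0^t e^{(s-t)/h}\|v(\cdot,s)\|_{L^p(\Om)}\,ds$, and then taking $L^p$-norm in $t$, using $\|e^{-\cdot/h}\|_{L^p(0,T)}\le h^{1/p}$ and $\|k_h\|_{L^1}=1$, yields the stated bound. The $L^p$ and pointwise a.e.\ convergences follow from a three-epsilon argument after approximating $v$ in $L^p(\Om_T)$ by functions in $C^\infty_c((0,T);C^\infty_c(\Om))$, for which uniform convergence $k_h\ast\tilde v_n\to v_n$ on $[0,T]$ is classical. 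Item (ii) is (i) applied twice: once to $\|v(\cdot,t)\|_{L^p(\Om)}$ and once to the Gagliardo integrand $|v(x,t)-v(y,t)|^p/|x-y|^{N+sp}$, using the fact that $[\cdot]_h$ commutes with spatial differences, $[v]_h(x,t)-[v]_h(y,t)=[v(x,\cdot)-v(y,\cdot)]_h(t)$. Item (iii) follows because the mollification is a time-affine combination of $v_0$ and $\{v(\cdot,s)\}_{s\le t}$, all of which vanish outside $\Om$. Item (v) is (i) with $p=\infty$ (or a direct $L^\infty$-in-$t$ estimate on the representation).

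\textbf{Item (iv).} The explicit formula gives $[v]_h(\cdot,0)=v_0$ at once, and continuity in $t$ with values in $L^2(\Om)$ follows from dominated convergence applied to the integral representation using continuity of $v$. For uniform $L^2$-convergence, I would write
\[
[v]_h(\cdot,t)-v(\cdot,t)=e^{-t/h}\bigl(v_0-v(\cdot,t)\bigr)+\tfrac{1}{h}\int_0^t e^{(s-t)/h}\bigl(v(\cdot,s)-v(\cdot,t)\bigr)\,ds,
\]
and estimate each piece uniformly in $t\in[0,T]$: the second is bounded by the modulus of continuity of $v$ (as a map $[0,T]\to L^2(\Om)$) times $\|k_h\|_{L^1}=1$, and the first is controlled by $\|v_0-v(\cdot,0)\|_{L^2}$ plus the modulus of continuity, both tending to $0$ as $h\downarrow 0$ under the natural compatibility $v_0=v(\cdot,0)$ used when the lemma is invoked. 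Pointwise a.e.\ convergence for fixed $t\in[0,T]$ falls out of the Lebesgue differentiation argument for the one-sided kernel.

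\textbf{Item (vi) and the main obstacle.} The identity $\partial_t[v]_h=-\tfrac{1}{h}([v]_h-v)$ from \cref{generalmolliX} is algebraic, so the question is whether $\tfrac{1}{h}([v]_h-v)\to -\partial_t v$ in $L^r(\Om_T)$. Integrating by parts in the defining integral (using $\partial_t v\in L^r$) produces
\[
[v]_h(\cdot,t)-v(\cdot,t)=e^{-t/h}\bigl(v_0-v(\cdot,0)\bigr)-\int_0^t e^{(s-t)/h}\,\partial_s v(\cdot,s)\,ds,
\]
whence
\[
\partial_t[v]_h(\cdot,t)=\bigl[\partial_t v\bigr]_h^{0}(\cdot,t)+\tfrac{1}{h}e^{-t/h}\bigl(v(\cdot,0)-v_0\bigr),
\]
where $[\,\cdot\,]_h^{0}$ denotes the same mollification with initial datum $0$. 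By (i) applied to $\partial_t v$, the first term converges to $\partial_t v$ in $L^r(\Om_T)$. This is the main obstacle: the transient term $\tfrac{1}{h}e^{-t/h}(v(\cdot,0)-v_0)$ has $L^r(\Om_T)$-norm of order $h^{(1-r)/r}\|v(\cdot,0)-v_0\|_{L^r(\Om)}$, which only vanishes if $v_0=v(\cdot,0)$ (and in the statement of (vi) this compatibility is implicit, since otherwise $\partial_t v$ carries a distributional Dirac-in-time term at $0$ that is invisible to the $L^r$-norm assumption). Under this compatibility --- which is exactly how the lemma is used in the passage-to-the-limit arguments earlier in the paper --- the convergence $\partial_t[v]_h\to\partial_t v$ in $L^r(\Om_T)$ follows from (i).
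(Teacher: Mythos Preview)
Your proof is correct and is precisely the standard argument: the paper itself does not give a proof but simply cites \cite[Lemma~B.2]{bogeleinParabolicSystemsQGrowth2013}, \cite[Lemma~6.2]{bogeleinDoublyNonlinearEquations2018}, and the appendix of \cite{prasadExistenceVariationalSolutions2021a}, and those references proceed exactly via the convolution representation with the one-sided exponential kernel and Minkowski/Young in the time variable that you describe. Your observation that the uniform convergence in (iv) and the $L^r$-convergence in (vi) genuinely require the compatibility $v_0=v(\cdot,0)$---because of the transient boundary layer $h^{-1}e^{-t/h}(v(\cdot,0)-v_0)$---is correct and worth recording; this hypothesis is implicit in the statement (indeed $[v]_h(\cdot,0)=v_0$ together with $C^0$-convergence forces it) and is always satisfied in the applications within the paper, where the mollification is taken with initial datum $u_0=u(\cdot,0)$.
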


\begin{proof}
The proofs of statements $(i), (iv), (v)$ and $(vi)$ are the same as in~\cite[Lemma~B.2]{bogeleinParabolicSystemsQGrowth2013} and \cite[Lemma 6.2]{bogeleinDoublyNonlinearEquations2018}. The proofs for $(ii)$ and $(iii)$ are given in the appendix of \cite{prasadExistenceVariationalSolutions2021a}.
\end{proof}

We also note the following theorem.

\begin{theorem}
\label{convergenceoffunctionals}
Let $T>0$, and assume that $v\in L^1(0,T;W^{s,1}(\mathbb{R}^N))$ with
\begin{align*}
    \frac{H(x,y,v(x,t)-v(y,t))}{|x-y|^N}\in L^1(0,T;L^1(\mathbb{R}^N\times\mathbb{R}^N)),
\end{align*} and $v_0\in W^{s,1}(\mathbb{R}^N)$, with
\begin{align*}
    \frac{H\left(x,y,v_0(x)-v_0(y)\right)}{|x-y|^N}\in L^1(\mathbb{R}^N\times\mathbb{R}^N).
\end{align*} Then, we have
\begin{align}
    \frac{H(x,y,[v]_h(x,t)-[v]_h(y,t))}{|x-y|^N}\leq \left[\frac{H(x,y,v(x,t)-v(y,t))}{|x-y|^N}\right]_h,
    \end{align}
    so that
\begin{align*}
    \frac{H\left(x,y,[v]_h(x,t)-[v]_h(y,t)\right)}{|x-y|^N}\in L^1(0,T;L^1(\mathbb{R}^N\times\mathbb{R}^N)).
\end{align*}
Moreover,
\begin{align*}
    \lim_{h\to 0}\int_0^T\iint\limits_{\mathbb{R}^N\times\mathbb{R}^N}\frac{H\left(x,y,[v]_h(x,t)-[v]_h(y,t)\right)}{|x-y|^N}\,dx\,dy\,dt=\int_0^T\iint\limits_{\mathbb{R}^N\times\mathbb{R}^N}\frac{H\left(x,y,v(x,t)-v(y,t)\right)}{|x-y|^N}\,dx\,dy\,dt
\end{align*}
\end{theorem}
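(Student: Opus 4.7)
The plan is to first establish the pointwise inequality via a Jensen-type argument exploiting the convexity of $H$ in its third slot, and then deduce $L^1$-convergence via a generalized dominated convergence theorem.

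\emph{Pointwise inequality via Jensen.} The weights in the definition~\cref{timemolli} form a probability measure on $\{0\}\cup(0,t)$, since
\begin{align*}
    e^{-t/h}+\frac{1}{h}\int_0^t e^{(s-t)/h}\,ds = e^{-t/h}+(1-e^{-t/h})=1.
\end{align*}
Therefore $[v]_h(x,t)-[v]_h(y,t)$ is the same convex combination of $v_0(x)-v_0(y)$ and $\{v(x,s)-v(y,s)\}_{s\in(0,t)}$ that $[v]_h(x,t)$ is of $v_0(x)$ and $\{v(x,s)\}_{s\in(0,t)}$. Applying Jensen's inequality pointwise in $(x,y)$ to the convex function $\xi\mapsto H(x,y,\xi)$ (see~\cref{eq: cvx_H}) produces, for a.e.\ $(x,y,t)$,
\begin{align*}
    H(x,y,[v]_h(x,t)-[v]_h(y,t))&\le e^{-t/h}H(x,y,v_0(x)-v_0(y))\\
    &\qquad+\frac{1}{h}\int_0^t e^{(s-t)/h}H(x,y,v(x,s)-v(y,s))\,ds.
\end{align*}
Dividing by $|x-y|^N$ gives the asserted pointwise inequality, where the mollification on the right is computed with initial datum $H(x,y,v_0(x)-v_0(y))/|x-y|^N$. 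By the hypotheses on $v$ and $v_0$ combined with~\cref{generalmolliX} applied in the Banach space $X=L^1(\RR^N\times\RR^N)$, the right-hand side lies in $L^1(0,T;L^1(\RR^N\times\RR^N))$, and hence so does the non-negative left-hand side.

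\emph{$L^1$-convergence.} Define
\begin{align*}
    f_h:=\frac{H(x,y,[v]_h(x,t)-[v]_h(y,t))}{|x-y|^N},\quad f:=\frac{H(x,y,v(x,t)-v(y,t))}{|x-y|^N},
\end{align*}
and let $g_h$ denote the mollification of $f$ that appears on the right of the inequality. By~\cref{propertiesofmolli}(i) applied in $L^1(\RR^N\times\RR^N)$, $g_h\to f$ in $L^1(0,T;L^1(\RR^N\times\RR^N))$. Also, $[v]_h\to v$ pointwise a.e.\ in $\Om_T$ by~\cref{propertiesofmolli}(i); since $H$ is Caratheodory (continuous in $\xi$ for a.e.\ $(x,y)$), we deduce $f_h\to f$ pointwise a.e. The bound $0\le f_h\le g_h$ together with $g_h\to f$ in $L^1$ then licenses a generalized dominated convergence theorem: applying Fatou's lemma to the non-negative quantity $g_h+f-|f_h-f|$ yields $\|f_h-f\|_{L^1}\to 0$, which is the claimed convergence of integrals.

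\emph{Main obstacle.} No serious difficulty arises; the only genuine content is the observation that~\cref{timemolli} is an honest convex combination of the initial datum and the time trace of $v$, whereby convexity of $H$ in $\xi$ transfers to the integrand via Jensen's inequality. The remaining care is in matching the initial datum of the mollification on the right-hand side; once this is in place, $L^1$-convergence follows from a standard generalized dominated convergence argument.
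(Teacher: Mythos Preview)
Your proposal is correct and is essentially the argument the paper defers to (the paper merely cites \cite[Lemma~2.3]{bogeleinExistenceEvolutionaryVariational2014}, noting that convexity of $\xi\mapsto H(x,y,\xi)$ plus Jensen gives the pointwise bound, and then $L^1$-convergence follows). One small remark: you invoke $[v]_h\to v$ pointwise a.e.\ in $\Om_T$, but for $f_h\to f$ a.e.\ you need it on $\RR^N\times(0,T)$ (and then in the pair $(x,y)$); the same Lebesgue-point argument behind \cref{propertiesofmolli}(i) gives this, so the gap is purely notational.
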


\begin{proof}
The proof is the same as that of \cite[Lemma~2.3]{bogeleinExistenceEvolutionaryVariational2014}. The only difference is we use convexity of $\xi\to H(x,y,\xi)$ and use convergence in $L^1(0,T;L^1(\mathbb{R}^N\times\mathbb{R}^N))$.
\end{proof}

\section*{Acknowledgments} 
The authors would like to thank Karthik Adimurthi for suggesting this problem and helpful discussions. The authors were supported by the Department of Atomic Energy,  Government of India, under project no.  12-R\&D-TFR-5.01-0520. 

\bibliography{MyLibrary}

\end{document}